\documentclass[11pt]{amsart}
\usepackage{amssymb,graphicx}
\usepackage{verbatim}
\usepackage{float}
\usepackage[dvipsnames]{xcolor}
\usepackage{amsmath}
\usepackage{cleveref}
\usepackage{autonum}
\usepackage{enumitem}
\newtheorem{theorem}{Theorem}[section]
\newtheorem{lemma}[theorem]{Lemma}
\newtheorem{proposition}[theorem]{Proposition}
\newtheorem{corollary}[theorem]{Corollary}

\theoremstyle{definition}
\newtheorem{definition}[theorem]{Definition}
\newtheorem*{nota}{Notation}
\newtheorem{remark}[theorem]{Remark}
\newtheorem{conjecture}[theorem]{Conjecture}
\newtheorem{example}[theorem]{Example}

\newcommand{\R}{{\mathbb R}}

\usepackage{refcount}
\usepackage{appendix}
\newcounter{cte}

\newif\ifrevisioncolors
\revisioncolorstrue

\newcommand{\versiontag}[1]{%
\par\smallskip
\noindent\fbox{\scriptsize\bfseries\sffamily #1}%
\par\nobreak\smallskip
}

\makeatletter
\newenvironment{oldversion}
{%
  \versiontag{OLD}%
  \begingroup
  \ifrevisioncolors\color{RoyalBlue}\fi
  \@afterindentfalse\@afterheading
}
{\endgroup}

\newenvironment{newversion}
{%
  \versiontag{NEW}%
  \begingroup
  \ifrevisioncolors\color{BrickRed}\fi
  \@afterindentfalse\@afterheading
}
{\endgroup}
\makeatother


\numberwithin{equation}{section}

\author[Bousquet]{Pierre Bousquet}
\address{Institut de Math\'ematiques de Toulouse, CNRS UMR 5219, Universit\'e de Toulouse, F-31062 Toulouse Cedex 9, France.}
\email{pierre.bousquet@math.univ-toulouse.fr}

\author[Mariconda]{Carlo Mariconda}
\address{Dipartimento di Matematica "Tullio Levi-Civita", Universit\`a degli Studi di Padova, Via Trieste 63, 35121 Padova, Italy.}
\email{carlo.mariconda@unipd.it}

\author[Treu]{Giulia Treu}
\address{Dipartimento di Matematica "Tullio Levi-Civita", Universit\`a degli Studi di Padova, Via Trieste 63, 35121 Padova, Italy.}
\email{giulia.treu@unipd.it}

\title{Beyond Constant Truncations: The Bounded Sobolev Approximation Gap}

\date{August 1, 2026}
\keywords{Sobolev approximation, bounded approximation, integral functionals,
Lavrentiev phenomenon, truncation methods}
\subjclass[2020]{49J45, 49N60}

\begin{document}
\begin{abstract}
We study the approximation of finite-energy Sobolev functions by bounded Sobolev functions for integral functionals associated with possibly signed Carath\'eodory integrands. The approximating sequence is required to converge strongly in the Sobolev space, while the corresponding energy densities converge in \(L^1\). Ordinary truncations may fail because they replace the gradient by zero on the tails, where the integrand need not be integrable. 
Our main contribution is a general method based on spatially dependent barriers  whose gradients can be adapted to the structure of the Lagrangian. 
\end{abstract}
\maketitle
\tableofcontents
\section{Introduction}
\subsection{Statement of the problem}
Let \(\Omega\) be a bounded open subset of \(\R^N\), let \(1\leq p\leq N\), and let
\[
\varphi\in W^{1,p}(\R^N)\cap L^\infty(\R^N).
\]
We set
\[
W^{1,p}_\varphi(\Omega):=\varphi+W^{1,p}_0(\Omega).
\]
Every \(u\in W^{1,p}_\varphi(\Omega)\) can be approximated by a sequence of bounded maps \((u_k)_{k\geq 1}\subset W^{1,p}_{\varphi}(\Omega)\cap L^{\infty}(\Omega)\), in the \(W^{1,p}\) sense:
\[
\lim_{k\to +\infty}\|u_k-u\|_{W^{1,p}(\Omega)} =0.
\]
Indeed, one only needs to define for every \(k\geq 1\),
\[
u_k:=T_k\circ u, \qquad \textrm{ where } T_k:t\in \R\mapsto \min(k, \max(-k,t)).
\]
Then, for every \(k>\|\varphi\|_{L^{\infty}(\Omega)}\), one has  \(u_k\in W^{1,p}_\varphi(\Omega)\cap L^{\infty}(\Omega)\) and, by the dominated convergence theorem,
\[
\|u_k-u\|_{W^{1,p}(\Omega)}^p=\||u|-k\|^p_{L^{p}([|u|\geq k])} + \|\nabla u\|^{p}_{L^{p}([|u|\geq k])}\longrightarrow 0, \quad \textrm{ when } k\to +\infty. 
\]
However, if one replaces the \(W^{1,p}\) norm by a more general (nonlinear) functional, this specific  sequence \((u_k)_{k\geq 1}\) obtained by truncating \(u\) with constants will not necessarily approximate the original map \(u\) in this new sense. More specifically, 
given a Carath\'eodory function
\[
L:\Omega\times\R\times\R^N\to\R,
\]
we consider the integral functional
\[
\mathcal L(u):=\int_\Omega L(x,u(x),\nabla u(x))\,dx
\]
on the admissible class
\[
\mathcal A:=\left\{u\in W^{1,p}_\varphi(\Omega):L(x,u,\nabla u)\in L^1(\Omega)\right\}.
\]
 We say that \(\mathcal L\) is \(L^\infty(\Omega)\)-regular at \(u\in\mathcal A\) if there exists a sequence
\[
(u_k)_{k\geq1}\subset\mathcal A\cap L^\infty(\Omega)
\]
that approximates \(u\) \emph{in norm}:
\[
u_k\longrightarrow u\quad\text{in }W^{1,p}(\Omega)
\]
and \emph{in energy}:
\[
L(x,u_k,\nabla u_k)\longrightarrow L(x,u,\nabla u)\quad\text{in }L^1(\Omega).
\]
We emphasize the fact that here, the convergence in energy means  convergence in \(L^1(\Omega)\) of the energy densities \(L(x,u_k,\nabla u_k)\), and not just  convergence in \(\R\) of the energies \(\mathcal{L}(u_k)\).

When \(p>N\), the Morrey embedding makes the problem trivial. The range \(p\leq N\) is substantially different.
First, in dimension \(N\geq2\), even if \(\mathcal A\neq\varnothing\), there need
not exist any bounded Sobolev function with finite energy:
The following example has been inspired by \cite[Example 3.3]{MaricondaTreu2020}.
\begin{example}
Assume that \(N\geq2\), \(1\leq p\leq N\), and that \(\Omega\)  is bounded, connected, and Lipschitz. Let \(u_0\in (W^{1,p}\setminus L^{\infty})(\Omega)\) 
and consider \(g(x,\xi)=|\nabla u_0(x)-\xi|^{N+1}\). Then for every \(b\in W^{1,p}(\Omega)\) such that
\[
\int_{\Omega}|\nabla b(x)-\nabla u_0(x)|^{N+1}\,dx <+\infty,
\]
the Poincar\'e-Wirtinger inequality implies that \(b-u_0\in W^{1,N+1}(\Omega)\), so that by the Morrey embedding, \(b-u_0\in L^{\infty}(\Omega)\). Then necessarily, \(b\not\in L^{\infty}(\Omega)\).
Hence \(g\) is a nonnegative Carath\'eodory function such that
\(g(\cdot,\nabla b)\notin L^1(\Omega)
\)
for every \(b\in W^{1,p}(\Omega)\cap L^\infty(\Omega)\).
\end{example}

Moreover, even if \(\mathcal{A}\cap L^{\infty}(\Omega)\not=\emptyset\), there may exist \(u\in \mathcal{A}\) such that \((T_k(u))_{k\geq 1}\) does not approximate \(u\) in energy. In Example~\ref{ex-contrex-ct-barrier}, we construct a Carathéodory function \(L:\Omega\times \R\times \R^N\to \R\)  and \(u\in \mathcal{A}\) such that \(\mathcal{L}\) is \(L^{\infty}(\Omega)\)-regular at \(u\) and yet,
\[
\lim_{k\to +\infty}\int_{\Omega}|L(x,T_k(u)(x),\nabla T_k(u)(x))-L(x,u(x),\nabla u(x))|\,dx =+\infty.
\]
This illustrates the fact that considering the sequence \((T_k(u))_{k\geq 1}\) is not always the good strategy to approximate \(u\) in norm and in energy. 

The main aim of this paper is to identify 
simple criteria on \(L\) and \(u\in \mathcal{A}\) ensuring that the corresponding functional \(\mathcal{L}\) is \(L^{\infty}(\Omega)\)-regular  at \(u\). We thus explicitly construct a variety of approximating sequences which differ from \((T_k(u))_{k\geq 1}\), depending on the specific properties of \(L\) and \(u\). 

\subsection{Motivation: the Lavrentiev phenomenon}
In the Calculus of variations, the Lavrentiev phenomenon refers to the fact that the infimum of a functional \(\mathcal{L}\) may depend on the set of admissible functions on which \(\mathcal{L}\) is minimized. More specifically, assuming for one instant that \(\varphi\in C^{1}(R^N)\), it may happen that for a nonnegative Carathéodory function \(L:\Omega\times \R\times \R^N\), one has 
\[
\inf_{C^{1}_{\varphi}(\overline{\Omega})}\mathcal{L}>\inf_{W^{1,p}_\varphi(\Omega)}\mathcal{L},
\]
where \(C^{1}_{\varphi}(\overline{\Omega})\) denotes the set of those functions \(u\in C^{1}(\overline{\Omega})\) agreeing with \(\varphi\) on \(\partial \Omega\). In many situations (see e.g. \cite{BorowskiChlebicka2022, Bousquet-Annali, BousquetMaricondaTreu2014, BousquetMaricondaTreu2024,   CorboEspositoDeArcangelis, Zhikov-2006}), the first step to discard such a phenomenon is to prove that
\[
\inf_{W^{1,p}_\varphi(\Omega)\cap L^{\infty}(\Omega)}\mathcal{L}=\inf_{W^{1,p}_\varphi(\Omega)}\mathcal{L}.
\]
To establish this identity, one often aims at proving that \(\mathcal{L}\) is \(L^{\infty}(\Omega)\)-regular at \emph{every} \(u\in \mathcal{A}\). When this property holds, we say that there is no \emph{Lavrentiev gap} between \(W^{1,p}(\Omega)\) and \(W^{1,p}(\Omega)\cap L^{\infty}(\Omega)\).

\subsection{Main results}
Throughout the paper, we use the notation
\[
\overline L(x,t,\xi):=|L(x,t,\xi)|+|\xi|^p.
\]

\subsubsection{General Lagrangians}

We first present sufficient conditions ensuring the \(L^{\infty}(\Omega)\)-regularity of \(\mathcal{L}\), when the Lagrangian \(L:\Omega\times \R\times \R^N\to \R\) has  a general dependence on the three variables \(x\in \Omega\), \(t\in \R\) and \(\xi\in \R^N\). In our first main result, those sufficient conditions are formulated  in terms of the summability of \(L(x,t,\nabla b)\) for some specific functions  \(b\in W^{1,p}(\Omega)\cap L^{\infty}(\Omega)\). 

\begin{theorem}
\label{thm-gen-Lag-1}
Let \(q\in L^1(\Omega)\) and \(h:\R\to[0,+\infty)\) a Borel map. Assume that at least one of the two following assumptions is satisfied:
\begin{itemize}
\item[(H1)] For every \(s\in\R\), there exist an integer \(m_s\geq1\), vectors
\(
\xi_1^s,\dots,\xi_{m_s}^s\in\R^N
\) containing \(0\) in their convex hull 
and  \(\delta_s>0\) such that, for a.e. \(x\in\Omega\) and every \(t\in[s-\delta_s,s+\delta_s]\),
\begin{equation}\label{eq-local-balanced-control}
\sum_{j=1}^{m_s}\overline L(x,t,\xi_j^s)
\leq h(s)+q(x).
\end{equation}
\item[(H2)] There exists \(b\in W^{1,p}(\Omega)\cap L^\infty(\Omega)\) such that for a.e. \(x\in\Omega\) and every \(s,t\in\R\) satisfying
\(
|s-t|\leq \operatorname*{ess\,osc}_{\Omega}b\),
one has
\[
\overline L(x,t,\nabla b(x))
\leq h(s)+q(x).
\]
\end{itemize}
Then \(\mathcal L\) is \(L^\infty(\Omega)\)-regular at every \(u\in\mathcal A\) such that \(h\circ u\in L^{1}(\Omega)\).
\end{theorem}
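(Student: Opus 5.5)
We replace the constant truncation levels by spatially dependent barriers: on the set where \(u\) is large, the approximant will be a barrier function whose gradient is controlled by (H1) or (H2), not the constant \(\pm k\) with zero gradient. The key observation is that in either hypothesis the bound \(h(s)+q(x)\) only needs to be evaluated at \(s=u(x)\) (up to a bounded error), and \(h\circ u+q\in L^1(\Omega)\) then provides a dominating function on the tail sets. Since these sets have measure tending to \(0\), their contribution to the energy vanishes.

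\emph{Case (H2).} Set \(\omega:=\operatorname*{ess\,osc}_\Omega b\). For \(k\) large, define barriers \(b_k^+:=b-\operatorname*{ess\,inf} b+k\) and \(b_k^-:=b-\operatorname*{ess\,sup} b-k\), so that \(k\le b_k^+\le k+\omega\) and \(-k-\omega\le b_k^-\le -k\). Put
\[
u_k:=\max\bigl(b_k^-,\min(u,b_k^+)\bigr).
\]
For \(k>\|\varphi\|_\infty\) we get \(u_k\in W^{1,p}_\varphi(\Omega)\cap L^\infty(\Omega)\), since \(u_k=u\) near where \(|u|\le k\). On \([u>b_k^+]\) we have \(u_k=b_k^+\), \(\nabla u_k=\nabla b\), and \(|u_k-u(x)|\) is not directly bounded by \(\omega\). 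So we cannot take \(s=u(x)\). Instead we use the monotone structure: the set \([u>b_k^+]\) is contained in \([u>k]\), and on it \(t=u_k(x)\in[k,k+\omega]\). We apply (H2) with \(s=t\) itself, or with \(s=k\). This gives \(\overline L(x,u_k,\nabla b)\le h(k)+q(x)\), which is not integrable uniformly in \(k\).

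This is the main obstacle. To fix it, I would randomize the level, an averaging (Fubini) argument over \(k\). For every \(x\) in the tail, \(u(x)\ge b_k^+(x)\ge k\). Choose \(s\) among values of \(u\) along a level: integrate over \(k\in[K,2K]\), or better pick \(k\) such that \(\int_{[k\le |u|\le k+\omega]}(h\circ u)\) is small. For \(t\in[k,k+\omega]\) we can use \(s=t\), but we need \(h(t)\) to be weighted by the measure of the tail. Alternatively, we handle it by a two-sided sandwich. I would instead use
\[
u_k:=\min\bigl(u,\max(b_k^+,\,u-\omega)\bigr)\text{-type}
\]
constructions, ensuring \(|u_k-u|\le\omega\) whenever \(\nabla u_k=\nabla b\). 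The cleanest version takes the barrier as a function of \(u\) itself: let \(\theta_k\) be a Lipschitz cutoff, and on the tail set \(u_k\) equal to \(b+c\), with the constant \(c\) chosen from a countable family so that \(|u-u_k|\le\omega\) where the barrier is active. This is achieved by stacking barriers \(b+j\omega\) at levels \(j\omega\) and gluing via max/min on a set where \(u\) itself lies in the band, so the estimate \(\overline L(x,u_k,\nabla b)\le h(u(x))+q(x)\) holds. Then dominated convergence on \([|u|\ge k]\), together with \(u_k=u\) elsewhere, yields both \(W^{1,p}\) convergence (using \(|\xi|^p\le\overline L\)) and \(L^1\) convergence of \(L(x,u_k,\nabla u_k)\).

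\emph{Case (H1).} Here no global \(b\) is given. Since \(0\) lies in the convex hull of \(\xi_j^s\), the function \(x\mapsto\min_j \xi_j^s\cdot x+c\) (or a max-min combination) is a piecewise affine function with gradients in \(\{\xi_j^s\}\) and bounded oscillation on \(\Omega\) after a suitable choice of slopes scaled down. By scaling \(\xi\mapsto\lambda\xi\), convexity of the hull keeps control only if we have bounds at scaled vectors, so instead we build a sawtooth or zigzag barrier using the \(\xi_j^s\) directly with period small enough that the oscillation is \(\le\delta_s\). Covering \(\R\) by countably many intervals \([s-\delta_s,s+\delta_s]\) (compactness on bounded pieces), we reduce to the (H2)-type argument band by band. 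The estimate becomes \(\sum_j\overline L\le h(s)+q\), with \(s\) a grid point near \(u(x)\). Converting \(h(s)\) into \(h(u(x))\) requires choosing the grid adapted to \(u\). I expect this bookkeeping to be delicate but of the same nature as the (H2) obstacle above.
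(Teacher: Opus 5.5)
Your proposal has a genuine gap. The obstacle you isolate is real: with the level $k$ itself, the upper tail is only bounded by $|[u>k]|\,h(k)+\int_{[u>k]}q\,dx$. Since $h$ is merely Borel, $h$ may be enormous at the integers while $h\circ u\in L^1(\Omega)$, so $|[u>k]|\,h(k)$ need not tend to $0$. But you never resolve this, and the fix you settle on cannot work. You require $|u_k-u|\le\omega$ wherever $\nabla u_k=\nabla b$, so as to apply (H2) with $s=u(x)$ and obtain the pointwise bound $\overline L(x,u_k,\nabla u_k)\le h(u(x))+q(x)$. This is incompatible with $u_k\in L^\infty(\Omega)$. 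If $u_k\le C$, then on $[u>C+\omega]$ (which has positive measure whenever $u$ is unbounded above, the only nontrivial case) (H2) cannot be used with $s=u(x)$, and nothing controls $L$ at any other gradient. Stacking finitely many translates $b+j\omega$ leaves exactly this region uncovered, and infinitely many destroy boundedness. Averaging over $k$ does not help in general either: it produces Lebesgue integrals of $h$, which $h\circ u\in L^1(\Omega)$ does not control. The same misconception drives your (H1) plan of covering $\R$ by intervals and choosing a grid adapted to $u$ to turn $h(s)$ into $h(u(x))$. A pointwise comparison with $h\circ u$ on the tail is neither possible nor needed.

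The missing idea is to keep a single level per $k$ but choose it as an almost-minimizer of $h$ on $[k,+\infty)$. Take $M_k\ge k$ with $h(M_k)\le\inf_{t\ge k}h(t)+1$ (Lemma~\ref{lm-mkpm}). Every $x\in[u>M_k]$ satisfies $u(x)\ge k$, hence $h(M_k)\le h(u(x))+1$. In particular $|[u>M_k]|\,h(M_k)\le\int_{[u>k]}(h\circ u+1)\,dx\to0$.

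With this choice your first construction already works under (H2). The barrier $c_k^+:=M_k+b-\operatorname*{ess\,inf}_\Omega b$ takes values in $[M_k,M_k+\omega]$. So (H2) with $s=M_k$ gives $\overline L(x,c_k^+,\nabla b)\le h(u(x))+1+q(x)$ on $[u>c_k^+]\subset[u>M_k]$, and the integral of this bound tends to $0$. The tail comparison of Lemma~\ref{lm-classic} then yields convergence in norm and energy; the negative tail is symmetric.

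Under (H1), apply the hypothesis only at $s=M_k$ and set $c_k^+:=M_k+b_k$, where $0\le b_k\le\delta_{M_k}$ and $\nabla b_k\in\{\xi_j^{M_k}\}_{j=1}^{m_{M_k}}$ a.e. Then $\overline L(x,c_k^+,\nabla c_k^+)\le h(M_k)+q(x)$ on the tail, the same estimate concludes, and no covering of $\R$ is needed. The existence of such $b_k$ with arbitrarily small oscillation is the other ingredient your sketch leaves unproved. A plain $\min_j\langle\xi_j,x\rangle+c$ has oscillation of order $\operatorname{diam}\Omega$. The paper obtains $b_k$ in Lemma~\ref{lm-oscillating-functions} by periodizing and rescaling a $W^{1,\infty}_0$ function on a cube with gradients in the finite set, supplied by Cellina and Dacorogna--Marcellini.
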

Stated otherwise,  assumption (H1) involves, for every value \(s\) of the state variable, a finite family of affine functions \(b_{j}^{s}(x)=\langle \xi_{j}^{s},x\rangle\)  such that \(0\in \operatorname{co}(\nabla b_{j}^s(x), \dots, \nabla b_{m_s}^{s}(x))\) for every \(s\in \R\) and \(x\in \Omega\). In contrast,  just one suitable function  \(b\in W^{1,p}(\Omega)\cap L^{\infty}(\Omega)\) is required in (H2). In both cases, a summability condition must be satisfied by the functions \(L(x,t,\nabla b_{j}^{s})\) (under (H1)) or \(L(x,t,\nabla b)\) (under (H2)). The main difference between the two cases is the dependence with respect to the state variable: roughly speaking,  we require in (H1) that \(L(x,\cdot, \nabla b_{j}^s)\) be   continuous in the state variable, uniformly with respect to \(x\), while in (H2),  \(L(x,\cdot,\nabla b)\) is assumed to be almost constant. 

In order to illustrate Theorem~\ref{thm-gen-Lag-1}, let us first consider the situation when (H1) is satisfied. 
It is sometimes possible to take the \emph{same} vectors \(\xi_{j}^s\) independently of \(s\), leading to a much simpler form of (H1), see~Corollary~\ref{prop-H(u,0)} below. For instance, consider the case when \(L(x,t,\xi)=t^4 g(\xi)\) and \(g\) is a smooth function compactly supported  in the unit ball \(B_1(0)\subset \R^N\) such that \(g(0)=1\). Then, \(L(x,t,\xi)=0\) for every \(\xi \not\in B_1(0)\) so that (H1) is satisfied in that case, since we only need to take \(m_s=2\) and  \(\xi_{1}^{s}=-\xi_{2}^{s}\) equal to any fixed vector in \(\R^N\setminus B_{1}(0)\). For a further discussion on that specific Lagrangian, where the standard truncation method does not apply, see Example~\ref{ex-contrex-ct-barrier}.

The  simplest situation where assumption (H2) is satisfied occurs when the function \(b\) satisfies \(L(x,t,\nabla b(x))=0\) for a.e. \(x\in \Omega\) and every \(t\in \R\). This is the case for instance when \(L(x,t,\xi)=\sum_{i=1}^{m}a_i(x,t)g_i(\xi)\) and there exists \(\xi_0\in \R^N\) such that \(g_i(\xi_0)=0\) for every \(i=1, \dots, m\). Another simple situation where (H2) holds, arises when for some \(\xi_*\in \R^N\), the function \(t\mapsto L(x,t, \xi_*)\) satisfies a doubling growth condition, see~Corollary~\ref{cor-single-direction} for a precise statement.

In the next result, we present another framework where any \(u\in \mathcal{A}\) can be approximated  by bounded Sobolev functions in norm and in energy. Here, the crucial assumption is related to the growth properties of \(L\) with respect to the state variable. 

\begin{theorem}\label{coro-other-truncations-i}
Assume that there exist \(\theta^+, \theta^- \in [0,1]\), two  sequences \((m_{i}^+)_{i\geq1}, (m_{i}^-)_{i\geq1}\) converging to
\(+\infty\),   a sequence  \((\varepsilon_i)_{i\geq1}\) of positive numbers, a summable function \(q\in L^{1}(\Omega)\)
and a constant \(C\geq 0\) such that, for every \(i\geq1\), for a.e.
\(x\in\Omega\), and for every \(\xi\in\mathbb R^N\),
\begin{equation}\label{eq3098i}\max_{s\in[m_{i}^+-\varepsilon_i,m_{i}^+]}\bigl(|L(x,s,\theta^+ \xi)|+ |L(x,s,-\theta^+\xi)|\bigr)
\leq
C\inf_{s\in[m_{i}^+,+\infty)}|L(x,s,\xi)|+q(x)
\end{equation}
and
\begin{equation}\label{eq3107i}
\max_{s\in[-m_{i}^-,-m_{i}^-+\varepsilon_i]}\bigl(|L(x,s,\theta^-\xi)|+
|L(x,s,-\theta^-\xi)| \bigr)
\leq
C\inf_{s\in(-\infty,-m_{i}^-]}|L(x,s,\xi)|+q(x).
\end{equation}
Then \(\mathcal L\) is \(L^\infty(\Omega)\)-regular at every
\(u\in\mathcal A\).
\end{theorem}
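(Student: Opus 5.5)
The idea is to replace the constant truncation \(T_k\) by a \emph{linear-in-\(u\)} truncation on the tails. Where \(u\) exceeds the level \(m_i^+\), we let \(u_i\) follow \(u\) with slope \(\theta^+\), compressed so that it stays in \([m_i^+-\varepsilon_i,m_i^+]\); symmetrically below \(-m_i^-\). Concretely, for \(i\) large (so that \(m_i^\pm-\varepsilon_i>\|\varphi\|_{L^\infty}\)) we define \(u_i:=S_i\circ u\), where \(S_i:\R\to\R\) is Lipschitz, \(S_i(t)=t\) on \([-m_i^-,m_i^+]\), and on \([m_i^+,+\infty)\) it is a bounded continuous piecewise-affine function with values in \([m_i^+-\varepsilon_i,m_i^+]\) and slopes \(\pm\theta^+\) (a zigzag). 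The case \(\theta^+=0\) is just the constant truncation. On \((-\infty,-m_i^-]\) we do the same with \(\theta^-\) and values in \([-m_i^-,-m_i^-+\varepsilon_i]\). Then \(u_i\) is bounded, \(u_i\in W^{1,p}_\varphi(\Omega)\) by the chain rule, and \(\nabla u_i=S_i'(u)\nabla u\) with \(S_i'(u)\in\{\theta^+,-\theta^+\}\) a.e. on \([u>m_i^+]\), and analogously on \([u<-m_i^-]\). On \([-m_i^-\le u\le m_i^+]\) we have \(u_i=u\) and \(\nabla u_i=\nabla u\).

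Norm convergence follows as for \(T_k\). Indeed, \(|u_i-u|\le |u|\) and \(|\nabla u_i-\nabla u|\le 2|\nabla u|\) on the set \(E_i:=[u>m_i^+]\cup[u<-m_i^-]\), and \(|E_i|\to0\), so dominated convergence gives \(u_i\to u\) in \(W^{1,p}\).

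For energy convergence, \(L(x,u_i,\nabla u_i)=L(x,u,\nabla u)\) off \(E_i\). On \([u>m_i^+]\), the point \(x\) has \(u_i(x)\in[m_i^+-\varepsilon_i,m_i^+]\) and \(\nabla u_i(x)=\pm\theta^+\nabla u(x)\). Applying \eqref{eq3098i} with \(\xi=\nabla u(x)\) and \(s=u_i(x)\) on the left and \(s=u(x)\in[m_i^+,\infty)\) in the infimum, we get
\[
|L(x,u_i,\nabla u_i)|\le C|L(x,u,\nabla u)|+q(x),
\]
and symmetrically on the negative tail via \eqref{eq3107i}. Hence
\[
\int_\Omega|L(x,u_i,\nabla u_i)-L(x,u,\nabla u)|\,dx\le\int_{E_i}\bigl((C+1)|L(x,u,\nabla u)|+q\bigr)\,dx\longrightarrow0,
\]
by absolute continuity of the integral, since \(L(\cdot,u,\nabla u)\in L^1\) (because \(u\in\mathcal A\)) and \(|E_i|\to0\). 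The same bound shows \(L(\cdot,u_i,\nabla u_i)\in L^1\), so \(u_i\in\mathcal A\cap L^\infty(\Omega)\).

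The main obstacle is the measurability and pointwise applicability of the hypothesis. The inequality \eqref{eq3098i} holds for a.e. \(x\) and every \(\xi\), with \(\max\) and \(\inf\) over \(s\), which is fine pointwise once the null set is fixed independently of \(\xi\) (as stated). We must also check that \(S_i\) can be built with \(|S_i'|=\theta^+\) exactly on the tail while keeping its range inside an interval of length \(\varepsilon_i\). A zigzag of amplitude \(\varepsilon_i\) works, and the chain rule for Lipschitz piecewise-\(C^1\) \(S_i\) (valid since \(\nabla u=0\) a.e. on level sets at the finitely many kinks per bounded interval, countably many overall) gives \(\nabla u_i=S_i'(u)\nabla u\) a.e. If \(\theta^+=0\) the construction degenerates to constant truncation and the same estimate applies.
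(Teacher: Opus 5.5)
Your proof is correct and follows the paper's argument essentially step for step. The common ingredients are:
\begin{itemize}
\item composition with a sawtooth \(\pi_i\) that equals the identity on \([-m_i^-,m_i^+]\) and has slopes \(\pm\theta^\pm\) and range of length at most \(\varepsilon_i\) on the tails;
\item the chain rule;
\item the pointwise bound obtained by taking \(\xi=\nabla u(x)\) and \(s=u(x)\) in the infimum;
\item dominated convergence on the shrinking tails.
\end{itemize}
One small point: your requirement \(m_i^\pm-\varepsilon_i>\|\varphi\|_{L^\infty}\) for large \(i\) is not automatic from the hypotheses, since the \(\varepsilon_i\) are arbitrary. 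It is harmless, though, because the assumptions \eqref{eq3098i}--\eqref{eq3107i} remain true when \(\varepsilon_i\) is replaced by \(\min\{\varepsilon_i,1\}\).
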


Assumptions \eqref{eq3098i} and \eqref{eq3107i} are satisfied (with \(\theta^\pm=1\)) when \(t\mapsto |L(x,t,\xi)|\) is nondecreasing on \(\R^+\) and nonincreasing on \(\R^-\), and  \(L\) is even with respect to \(\xi\),   see~Corollary~\ref{cor-monotone-tails} for a precise (and more general) statement.

Both Theorem~\ref{thm-gen-Lag-1} and Theorem~\ref{coro-other-truncations-i} can be formulated in a simpler way when we further assume that the Lagragian has a specific structure. In this article, we focus our attention on two specific cases, namely autonomous and product-type Lagrangians.

\subsection{Autonomous Lagrangians}

We say that \(L\) is  \emph{autonomous} when there exists a continuous function \(H:\R\times \R^N \to \R\) such that \(L(x,t,\xi)=H(t,\xi)\) for every \((x,t,\xi)\in \Omega \times \R\times \R^N\).
In that case, by relying on constant truncations, one can easily prove (see Proposition~\ref{cor-lu0} below) that \(\mathcal{L}\) is \(L^{\infty}(\Omega)\)-regular at any \(u\in \mathcal{A}\) such that 
\begin{equation}\label{eq285}
H(u,0)\in L^{1}(\Omega).
\end{equation}
However, the  above assumption is not implied by the fact that \(u\in \mathcal{A}\), see Example~\ref{ex-contrex-ct-barrier} or Example~\ref{ex:^q}. It is thus convenient to identify simple conditions on \(H\) which ensure that any \(u\in \mathcal{A}\) satisfies~\eqref{eq285}. Such a condition is given in the first item of the following statement:

\begin{theorem}\label{thm-synt-autonomous-1}
Assume that
\[
L(x,t,\xi)=H(t,\xi)
\]
for some continuous nonnegative function \(H:\R\times\R^N\to [0,+\infty)\).
Assume  further that at least one of the two following assumptions is satisfied:
\begin{itemize}
\item[(HA)] For every \(t\in \R\), there exists \(\zeta\in \R^N\) such that
\[
\forall \xi \in \R^N, \quad H(t,\xi)\geq H(t,0)+\langle \zeta, \xi \rangle.
\]
\item[(HB)] There exists \(C\geq 1\) such that for every \(t\in \R\), for every \(\xi\in \R^N\), one has
\[
H(t,-\xi)\leq C(H(t,\xi)+1).
\]
\end{itemize}
Then, \(\mathcal{L}\) is \(L^{\infty}(\Omega)\)-regular at every \(u\in \mathcal{A}\).
\end{theorem}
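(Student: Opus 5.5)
We prove Theorem~\ref{thm-synt-autonomous-1} by reducing both cases to the constant-truncation criterion of Proposition~\ref{cor-lu0}. That is, for every \(u\in\mathcal A\) we show
\[
H(u,0)\in L^1(\Omega).
\]
Proposition~\ref{cor-lu0} then gives the \(L^\infty(\Omega)\)-regularity of \(\mathcal L\) at \(u\) through the sequence \((T_k(u))_{k\geq 1}\). Throughout, \(u\in W^{1,p}_\varphi(\Omega)\) and \(H(u,\nabla u)\in L^1(\Omega)\) with \(H\geq 0\).

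\emph{Case (HA).} We need a measurable selection of the subgradient-type vector \(\zeta\). Set
\[
Z(t):=\{\zeta\in\R^N:\ H(t,\xi)\geq H(t,0)+\langle\zeta,\xi\rangle\ \text{for all}\ \xi\}.
\]
This is a closed convex set-valued map, nonempty by (HA), with closed graph by continuity of \(H\). By the Kuratowski--Ryll-Nardzewski theorem it admits a Borel selection \(t\mapsto\zeta(t)\).

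The pointwise inequality \(H(u,0)\leq H(u,\nabla u)-\langle\zeta(u),\nabla u\rangle\) is not integrable in general, because \(\zeta(u)\) may be unbounded. Instead, for every \(\lambda\in\R\) we apply (HA) with \(\xi=\lambda\nabla u\):
\[
H(u,0)\leq H(u,\lambda\nabla u)-\lambda\langle\zeta(u),\nabla u\rangle.
\]
The idea is to exploit that this holds for both \(\lambda=\pm1\)? That is not directly available, since \(H(u,-\nabla u)\) is not controlled. We therefore integrate the inequality with \(\lambda=1\) and kill the linear term via an integration by parts identity. Let \(F(t):=\int_0^t\zeta(\tau)\,d\tau\), which requires local integrability of \(\zeta\); this holds if \(\zeta\) is chosen locally bounded, and \(Z\) is locally bounded because \(H\) is bounded on compacts. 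Then
\[
\langle\zeta(u),\nabla u\rangle=\operatorname{div}\bigl(F(u)\bigr)
\]
in a suitable sense. We apply this on truncations: with \(u_k=T_k(u)\) one has \(\int_\Omega\operatorname{div}(F(u_k)-F(\varphi))=0\) componentwise, since \(F(u_k)-F(\varphi)\in W^{1,1}_0\) for \(k>\|\varphi\|_\infty\). This gives
\[
\int_{[|u|<k]}H(u,0)\leq\int_\Omega H(u,\nabla u)+\int_\Omega\langle\zeta(\varphi),\nabla\varphi\rangle+\text{bounded terms},
\]
and monotone convergence yields \(H(u,0)\in L^1\).

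The main obstacle in this case is controlling the contribution on \([|u|\geq k]\), where \(u_k\) has zero gradient. There the inequality is used with \(\xi=0\) trivially, so we restrict the integrals to \([|u|<k]\) and use \(\nabla u_k=\nabla u\,\mathbf 1_{[|u|<k]}\). This must be checked carefully; if it fails, we would switch to the inequality \(H(u,0)\leq H(u,\nabla u_k)-\langle\zeta(u),\nabla u_k\rangle\) combined with the chain rule for \(F\circ u_k\).

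\emph{Case (HB).} The plan is to verify hypothesis (H1) of Theorem~\ref{thm-gen-Lag-1} along the gradient of \(u\) itself, or alternatively to use a convex combination argument. Since (H1) needs a uniform bound and \(H\) is merely continuous, we instead again aim at \(H(u,0)\in L^1\). Without convexity this is unclear, so we use Theorem~\ref{coro-other-truncations-i}'s philosophy through a barrier \(v_k\) equal to \(u\) where \(|u|<k\) and decreasing back through \(-\nabla u\) reflected. Concretely, take
\[
u_k:=\min\bigl(u,\,2k-u\bigr)\ \text{on}\ [u>k],
\]
and similarly below, which is bounded only if \(u\leq 2k\); iterating the reflection (a zigzag \(S_k\circ u\) with \(S_k\) a 1-Lipschitz sawtooth, \(|S_k'|=1\)) yields a bounded function with \(\nabla u_k=\pm\nabla u\). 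However, \(H(S_k(u),\pm\nabla u)\) changes the state variable, which (HB) does not control.

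So the principal difficulty in (HB) is the \(t\)-dependence. We would instead reflect the state too: set \(u_k=T_k(u)\) where \(|u|\le k\), and on \([|u|>k]\) use the requirement of convergence only. The tail energy is \(\int_{[|u|>k]}H(u_k,\nabla u_k)\). We would try the alternating choice \(\nabla u_k=\pm\nabla u\) with \(u_k\) close to \(u\) on level strips. As this seems hard, we expect (HB) to be the main obstacle and plan first to check whether (HB) implies (HA)-type integrability via averaging \(H(u,0)\leq\tfrac12(H(u,\xi)+H(u,-\xi))\), which fails without convexity. We would therefore look for the paper's intended route through Theorem~\ref{coro-other-truncations-i} with \(\theta=1\).
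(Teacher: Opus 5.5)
Your (HA) argument is correct and is essentially the paper's: Lemma~\ref{prop-affine-minorant} followed by Proposition~\ref{cor-lu0}. The paper truncates one tail at a time with \(v_k=\min\{(u-T)_++T,k\}\), \(T>\|\varphi\|_{L^\infty}\), \(Z(T)=0\). You instead truncate with \(T_k(u)\) and subtract \(F(\varphi)\). That only produces the finite constant \(\int_\Omega\langle\zeta(\varphi),\nabla\varphi\rangle\,dx\), which enters with a minus sign rather than the plus in your display; this is harmless. The ``obstacle'' on \([|u|\geq k]\) does not exist, since \(\nabla T_k(u)=\nabla u\,\chi_{[|u|<k]}\) a.e. Also note that Proposition~\ref{cor-lu0} truncates at levels \(M_k^\pm\) from Lemma~\ref{lm-mkpm}, not at every \(k\).

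\textbf{The gap is (HB).} You give no proof, and the routes you sketch cannot work. Proving \(H(u,0)\in L^1(\Omega)\) is impossible in general. The Lagrangian of Example~\ref{ex:^q} is even in \(\xi\), so (HB) holds with \(C=1\), yet \(H(u,\xi)\notin L^1(\Omega)\) for every fixed \(\xi\), including \(\xi=0\). Theorem~\ref{coro-other-truncations-i} does not apply either, because (HB) says nothing about the state variable. For that Lagrangian, \(\inf_{s\geq m}H(s,\xi)=0\) whenever \(|\xi|\geq m^3/2\). Meanwhile the left-hand side of \eqref{eq3098i} with \(m=m_i^+\) is at least \(H(m,\theta\xi)\), which is unbounded in \(\xi\) if \(\theta>0\) and in \(i\) if \(\theta=0\). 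So no integrable \(q\) works. This is the same obstruction you met with \(S_k\circ u\).

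\textbf{The missing idea.} Decouple the barrier gradients from \(\nabla u\) and choose them level by level through an infimum over \(\xi\). This is the paper's route (Corollary~\ref{corollary-autonomous-symetric}). The vectors in (H1) may depend on \(s\) but not on \(x\), so instead of \(\nabla u(x)\) take, for each \(s\), a \(\xi^s\) with \(\overline H(s,\xi^s)\leq\inf_\xi\overline H(s,\xi)+1\), together with \(-\xi^s\). Since \(C\geq1\), (HB) gives \(\overline H(s,-\xi^s)\leq C(\overline H(s,\xi^s)+1)\), hence
\[
h(s):=\overline H(s,\xi^s)+\overline H(s,-\xi^s)+1\leq(1+C)\Bigl(\inf_{\xi}\overline H(s,\xi)+2\Bigr).
\]
Comparing the infimum at \(s=u(x)\) with \(\xi=\nabla u(x)\) yields \(h(u)\leq(1+C)(\overline H(u,\nabla u)+2)\in L^1(\Omega)\). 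Continuity of \(H\) in \(t\) gives \(\delta_s\) with \(\overline H(t,\xi^s)+\overline H(t,-\xi^s)\leq h(s)\) for \(|t-s|\leq\delta_s\). Hence (H1) of Theorem~\ref{thm-gen-Lag-1} holds with \(q=0\), and Theorem~\ref{prop-della-francesca} concludes. The barriers are zigzags with gradients \(\pm\xi^{M_k}\) and oscillation at most \(\delta_{M_k}\) (Lemma~\ref{lm-oscillating-functions}), placed at levels \(M_k\) with \(|[u>M_k]|\,h(M_k)\to0\).

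Your worry that (H1) ``needs a uniform bound'' is misplaced. The bound is only required locally in \(t\), with an arbitrary Borel \(h\) subject to \(h\circ u\in L^1\). To avoid measurability issues with the pointwise choice of \(\xi^s\), one can use the upper semicontinuous majorant \((1+C)(\inf_\xi\overline H(s,\xi)+2)\) in place of \(h\).
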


The first condition means that for every \(t\in \R\), the function \(\xi \mapsto H(t,\xi)\) has a convex subgradient at the origin. This is in particular the case when \(H(t,\cdot)\) is convex (or at least, coincides with its convex envelope at \(0\)) and we thus recover~\cite[Proposition~5.3]{Bousquet-Annali}. It turns out that (HA) is just one particular case of a more general condition where one assumes that \(\xi\mapsto H(t,\xi)\) has  convex subgradients at finitely many points \(\xi_1, \dots, \xi_m\) containing \(0\) in their convex hull, together with a suitable summability condition which is automatically satisfied when \(\xi_1=\dots=\xi_m=0\), see~Theorem~\ref{cor-barycentric-subgradients} for a precise statement. 
 
Condition (HB) is trivially satisfied when \(H\) is even with respect to \(\xi\), in particular when \(H\) depends on \(\xi\) only through a norm of that variable. 
 
Theorem~\ref{thm-synt-autonomous-1} can be easily extended to the case when \(H\) is not assumed to be nonnegative anymore, see Theorem~\ref{th-real-valued-affine-minorant} for (HA) and Corollary~\ref{corollary-autonomous-symetric} for (HB).

\subsection{Product type Lagrangians}
Another class of Lagrangians for which Theorem~\ref{thm-gen-Lag-1} and Theorem~\ref{coro-other-truncations-i} yield interesting consequences is the set of those functions \(L:\Omega\times \R\times \R^N\to \R\) having the following product-type structure:  there exist a continuous function \(a:\R\to \R\) and a Carath\'eodory function \(g:\Omega\times \R^N\to \R\) such that for every \((x,t,\xi)\in \Omega\times \R\times \R^N\),
\[
L(x,t,\xi)=a(t)g(x,\xi).
\]
Given \(u\in \mathcal{A}\) such that 
\begin{equation}\label{eq324}
(a\circ u)g(x,0)\in L^{1}(\Omega),
\end{equation} 
one can easily approximate \(u\) in norm and in energy by bounded Sobolev functions, provided that \(g(x,0)\in L^{1}(\Omega)\), see Proposition~\ref{lm-Giotto} below, the proof of which relies on the standard truncation approach. However, assumption~\eqref{eq324} is not automatically implied by the condition \(u\in \mathcal{A}\). In the next result, we provide general assumptions on \(a\) and \(g\) which entail that  \(\mathcal{L}\) is \(L^{\infty}(\Omega)\)-regular at \emph{every} \(u\in \mathcal{A}\).

\begin{theorem}\label{thm-synt-product-1}
Assume that \(L\) has a product-type structure as above and that at least  one of the following assumptions hold:
\begin{itemize}
\item[(Hi)] \(g(\cdot,0)\in L^\infty(\Omega)\) and there exists \(\alpha>0\) such that
\[
|g(x,\xi)|\geq\alpha
\qquad\text{for a.e. }x\in\Omega\text{ and every }\xi\in\R^N;
\]
\item[(Hii)] there exists \(\xi_0\in\R^N\) such that
\(g(x,\xi_0)=0\) for a.e. \(x\in\Omega\);
\item[(Hiii)] \(a\equiv 1\) and there exists \(b\in W^{1,p}(\Omega)\cap L^\infty(\Omega)\) such that
\(g(\cdot,\nabla b)\in L^1(\Omega)\);
\item[(Hiv)] \(g(\cdot,0)\in L^1(\Omega)\) and  there exists \(c>0\) such that for a.e. \(x\in\Omega\) and every \(\xi\in\mathbb R^N\),
\[
|g(x,-\xi)|\leq c|g(x,\xi)|
\]
\end{itemize}
Then \(\mathcal L\) is \(L^\infty(\Omega)\)-regular at every \(u\in\mathcal A\).
\end{theorem}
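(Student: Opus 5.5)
We treat each of the four hypotheses separately, reducing each one to Theorem~\ref{thm-gen-Lag-1} or Theorem~\ref{coro-other-truncations-i}. Throughout, \(L(x,t,\xi)=a(t)g(x,\xi)\) with \(a\) continuous and \(g\) Carath\'eodory, and \(u\in\mathcal A\) is fixed.

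\emph{Case (Hii).} We apply (H2) of Theorem~\ref{thm-gen-Lag-1} with the affine barrier \(b(x)=\langle\xi_0,x\rangle\). This \(b\) is bounded and lies in \(W^{1,p}(\Omega)\), since \(\Omega\) is bounded. Then \(L(x,t,\nabla b)=a(t)g(x,\xi_0)=0\), so \(\overline L(x,t,\nabla b)=|\xi_0|^p\). Hence (H2) holds with \(h\equiv0\) and \(q\equiv|\xi_0|^p\in L^1(\Omega)\), and \(h\circ u\in L^1\) is trivial.

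\emph{Case (Hiii).} Here \(L=g(x,\xi)\) does not depend on \(t\). We apply (H2) with the given \(b\), with \(h\equiv0\) and \(q=|g(\cdot,\nabla b)|+|\nabla b|^p\in L^1(\Omega)\).

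\emph{Case (Hi).} The lower bound \(|g|\geq\alpha\) gives \(\alpha|a(u)|\leq|L(x,u,\nabla u)|\), so \(a\circ u\in L^1(\Omega)\). Since \(g(\cdot,0)\in L^\infty\), we get \((a\circ u)g(\cdot,0)\in L^1\), which is \eqref{eq324}. Also \(g(\cdot,0)\in L^1\) because \(\Omega\) is bounded. Proposition~\ref{lm-Giotto} then gives the conclusion. Equivalently, one can use (H1) with \(\xi^s_j=0\), \(h=\sup_{[s-1,s+1]}|a|\) and \(q=\|g(\cdot,0)\|_\infty\), but then one must check \(h\circ u\in L^1\), which requires comparing \(\sup_{[u-1,u+1]}|a|\) with \(|a(u)|\). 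The Proposition route avoids this, so we prefer it.

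\emph{Case (Hiv).} This is the main case, and we use Theorem~\ref{coro-other-truncations-i} with \(\theta^\pm=1\). Since \(|L(x,s,-\xi)|\leq c|L(x,s,\xi)|\), the left side of \eqref{eq3098i} is bounded by
\[
(1+c)\Bigl(\max_{[m_i^+-\varepsilon_i,m_i^+]}|a|\Bigr)|g(x,\xi)|.
\]
We therefore need levels \(m_i^+\to\infty\) with
\[
\max_{[m_i^+-\varepsilon_i,m_i^+]}|a|\leq C\inf_{[m_i^+,\infty)}|a|.
\]
This is not automatic for general \(a\), for instance when \(a\) vanishes at arbitrarily large values. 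So we split into cases.

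\begin{itemize}
\item If \(\liminf_{t\to+\infty}|a(t)|>0\) along a suitable sequence of levels, we choose each \(m_i^+\) to be a point realizing \(\inf_{[m,\infty)}|a|\) approximately, and take \(\varepsilon_i\) small. Continuity of \(a\) then gives the comparison with \(C=2\).
\item If \(a\) has zeros \(z_i\to+\infty\), we do not use \eqref{eq3098i}. Instead we use a barrier argument: truncating \(u\) at the level \(z_i\) makes \(L\) vanish on \(\{u\geq z_i\}\), which handles the upper tail directly.
\item If the infimum is \(0\) but is not attained, \(|a|\) tends to \(0\) along the tail. In that case \(\inf_{[m,\infty)}|a|=0\) while the left side need not vanish, and we handle the tail via the \(q\) term, using \(g(\cdot,0)\in L^1\) and the smallness of \(a\) there.
\end{itemize}

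We expect this case analysis on \(a\) to be the main obstacle. If it fails, the fallback is a direct proof mimicking Theorem~\ref{coro-other-truncations-i}. One would truncate at levels \(m\) where \(|a(m)|\) is comparable to \(\inf_{[m,\infty)}|a|\). On \(\{u>m\}\) the truncated density is \(a(m)g(x,0)\), which is controlled by \(g(\cdot,0)\in L^1\) when \(a(m)\) stays bounded. The gradient-reversal bound \(|g(-\xi)|\leq c|g(\xi)|\) is used on a thin layer \(\{m-\varepsilon<u<m\}\), where the approximant is built by reflecting \(u\) downward. The same argument applies to the lower tail with \eqref{eq3107i}.
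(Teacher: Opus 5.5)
Your arguments for (Hi), (Hii) and (Hiii) are correct and coincide with the paper's. So is your sub-case of (Hiv) where $\liminf_{t\to+\infty}|a(t)|>0$, treated with $\theta^+=1$.

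\textbf{The gap.} It is in the sub-case $\liminf_{t\to+\infty}|a(t)|=0$. Then $\inf_{[m,+\infty)}|a|=0$ for every $m$, so the right side of \eqref{eq3098i} reduces to $q(x)$. With $\theta^+=1$ the condition becomes
\[
\max_{s\in[m_i^+-\varepsilon_i,m_i^+]}|a(s)|\,\bigl(|g(x,\xi)|+|g(x,-\xi)|\bigr)\leq q(x)\qquad\text{for a.e. }x\in\Omega\text{ and every }\xi\in\R^N.
\]
This fails whenever $g(x,\cdot)$ is unbounded and $a$ does not vanish identically on $[m_i^+-\varepsilon_i,m_i^+]$. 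For instance, $a(t)=e^{-t}$ and $g(x,\xi)=1+|\xi|^2$ satisfy (Hiv) but make it impossible. Smallness of $a$ on the layer does not help. The hypothesis $g(\cdot,0)\in L^1(\Omega)$ cannot enter either, because with $\theta^+=1$ the approximant has gradient $\pm\nabla u$ on the whole tail, never $0$. In your zeros sub-case you switch to a constant truncation. That is fine on that tail, but it is not covered by Theorem~\ref{coro-other-truncations-i} with $\theta^\pm=1$, and you do not say how it is combined with the sawtooth on the other tail.

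\textbf{The missing idea.} The parameters $\theta^\pm$ are free in $[0,1]$ and can be chosen independently on each tail; $\theta^+=0$ turns the upper part of the sawtooth into the constant $m_i^+$, i.e.\ a constant truncation. When $\liminf_{t\to+\infty}|a(t)|=0$ (zeros or not), proceed as follows. Pick $m_i^+\geq i$ with $|a(m_i^+)|\leq 1$. By continuity, pick $\varepsilon_i>0$ with $\max_{[m_i^+-\varepsilon_i,m_i^+]}|a|\leq 2$. The left side of \eqref{eq3098i} is then $2\max|a|\,|g(x,0)|\leq 4|g(x,0)|$. This is absorbed by $q:=4|g(\cdot,0)|\in L^1(\Omega)$, which is exactly where $g(\cdot,0)\in L^1$ is used. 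This single choice covers both of your last two sub-cases. Making the same dichotomy on $\liminf_{t\to-\infty}|a(t)|$ to fix $\theta^-$, one application of Theorem~\ref{coro-other-truncations-i} handles both tails.

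\textbf{Your fallback does not close the gap.} First, the constant truncation there produces the density $a(m)g(x,0)$ on $[u>m]$. This is only controlled when $|a(m)|$ stays bounded along the chosen levels. When $|a(t)|\to+\infty$ you would need $(a\circ u)g(\cdot,0)\in L^1(\Omega)$, which is not assumed. Second, the reflection is misplaced. It must act on the whole tail $[u>m]$, whose image lies in $[m-\varepsilon,m]$. It should not act on the layer $[m-\varepsilon<u<m]$, where the approximant simply equals $u$.
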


\subsection{Strategies of the proofs}

The main idea of the paper is to replace constant truncation levels by spatially dependent barriers that may depend on the function \(u\in \mathcal{A}\) to be approximated. More specifically, given \(L:\Omega\times \R\times \R^N\to \R\) and \(u\in \mathcal{A}\), we construct two families of functions \((c_k^+)_{k\geq 1}\) and \((c_k^-)_{k\geq 1}\) in \(W^{1,p}(\Omega)\cap L^{\infty}(\Omega)\) that uniformly converge respectively to \(+\infty\) and  \(-\infty\). We then say that \(c_{k}^+\) (resp. \(c_{k}^-\)) are upper  (resp. lower) barriers. For instance, in the standard truncation approach by constant functions, one simply takes \(c_{k}^+\equiv k\) and \(c_{k}^-\equiv -k\). For  general barriers \(c_{k}^{\pm}\), we  consider  the approximating sequence:
\begin{equation}\label{eq528}
u_k:=\min\{c_k^+,\max\{u,c_k^-\}\}, \qquad k\geq 1.
\end{equation}
We call the set \([u\geq c_{k}^+]\) (resp. \([u\leq c_{k}^-]\)) the positive tail (resp. the negative tail). The tails thus correspond to the truncated regions, where the gradients of \(u_k\) are \(\nabla c_k^+\) and \(\nabla c_k^-\), rather than zero as in the case when one truncates with constant functions. These gradients \(\nabla c_{k}^{\pm}\) can therefore be chosen according to the structure of the Lagrangian.

Lemma~\ref{lm-classic} gives the basic barrier criterion implying that the functions \(u_k\) defined in \eqref{eq528} converge to \(u\) in norm and in energy:
\[
\lim_{k\to +\infty}\left(\int_{[u>c_k^+]}
\left(|L(x,c_k^+,\nabla c_k^+)|+|\nabla c_{k}^+|^p\right)\,dx + \int_{[u<c_k^-]}
\left(|L(x,c_k^-,\nabla c_k^-)|+|\nabla c_{k}^-|^p\right)\,dx\right)=0.
\]

The heart of the matter is then to construct barriers \(c_{k}^{\pm}\) that satisfy this criterion.
At a technical level, the three main outcomes of this paper can be described as follows:
\begin{enumerate}
\item Construction of barriers with small oscillations, taking their gradients in a finite set;
\item A summability criterion for functions of the form \(x\mapsto H(u(x), \xi)\) with \(\xi\in \R^N\);
\item Construction of  barriers of the form \(\pi\circ u\), where  \(\pi\) is a sawtooth function with small oscillations.
\end{enumerate}
As for the first technique, we rely on  the following basic problem in convex integration theory (see~e.g. \cite{DacorognaMarcellini}): given a finite set \(\xi_1, \dots, \xi_m \in \R^N\), is it possible to construct a function \(b\in W^{1,\infty}_0(\Omega)\) such that \(\nabla b(x)\in \{\xi_1,\dots, \xi_m\}\) for a.e. \(x\in \Omega\) ? The difficulty is that \(b\) must vanish on the boundary of \(\Omega\). 
One can positively answer  this question when \(0\) belongs to the  interior of \(\operatorname{co} (\xi_1, \dots, \xi_m)\). By translating and scaling \(b\), we can thus produce functions that converge  uniformly to \(\pm\infty\), taking their gradients in \(\xi_1, \dots, \xi_m\),  and with arbitrarily small oscillations. We can apply this observation to construct barriers provided that there exist vectors \(\xi_1, \dots, \xi_m\)  containing \(0\) in their convex hull and such that \(L(x,u,\xi_j)\) is  summable. This technique, that we exploit in Lemma~\ref{lm-oscillating-functions}, is the main ingredient in the proof of Theorem~\ref{thm-gen-Lag-1} under assumption (H1) and of Theorem~\ref{thm-synt-autonomous-1} under assumption (HB).

The second technique, which is encapsulated in Lemma~\ref{prop-affine-minorant}, is not new, since we have already used it in our previous papers \cite{Bousquet-Annali} and \cite{MaricondaTreu2020}. However, we generalize it and substantially extend its range of applications. More specifically, Lemma~\ref{prop-affine-minorant} states the following summability property: if the continuous  maps \(H:\R\times \R^N\to [0,+\infty)\) and \(K:\R\to [0,+\infty)\) satisfy a subgradient inequality of the form:
\begin{equation}\label{eq554}
H(t,\xi) \geq K(t)+\langle \zeta_t,\xi\rangle, \qquad \forall \xi\in \R^N,
\end{equation}
and if \(H(u,\nabla u)\in L^{1}(\Omega)\), then \(K(u)\in L^{1}(\Omega)\).  In \eqref{eq554}, the vectors \(\zeta_t\) may depend on \(t\) but not on \(\xi\). 
In a nutshell, this lemma says that the term \(\langle \zeta_t,\xi\rangle\) does not interfere in the summability of  \(K(u)\). We emphasize that the boundedness of the boundary condition \(\varphi\) plays a crucial role here, see~\cite[Example 4.8]{BousquetMaricondaTreu2014}. 
We apply Lemma~\ref{prop-affine-minorant} not just when \(H(t,\cdot)\) has a convex subgradient at the origin (Theorem~\ref{thm-synt-autonomous-1} under (HA)) but also when \(H(t,\cdot)\) has convex subgradients at finitely many points containing \(0\) in their convex hulls (see~Theorem~\ref{cor-barycentric-subgradients}). This allows some freedom to construct barriers when the Lagrangian is not convex near the origin. 

Finally, the third technique, based on sawtooth functions, is the key ingredient in the proof of Theorem~\ref{coro-other-truncations-i}. Let \(u\in \mathcal{A}\) such that \(L(x,u, -\nabla u)\) is also summable. Let \(b=\pi\circ u\), where \(\pi:\R\to \R\) is a function such that \(\pi'(t)\in \{\pm 1\}\) almost everywhere on \(\R\). Then, \(\nabla b(x)=\pm \nabla u(x)\) almost everywhere on \(\Omega\). 
We can thus deduce that \(L(x,u, \nabla b)\) is summable. Actually, one takes \(\pi(t)=t\) on a large interval \([-m,m]\) and  \(\pi(t)\) close to constants when \(t\) is large and when \(-t\) is large. On the sets \(]-\infty,-m]\) and \([m,+\infty[\), the function \(\pi\) is thus a sawtooth function. By exploiting the continuity of \(L\) with respect to the state variable, it follows that \(L(x,b,\nabla b)\) is summable on the tails of \(u\). One can thus construct the desired upper and lower barriers.

\subsection{Plan of the paper}
 Section~2.1 develops the general barrier  methods.  
 It contains Lemma~\ref{lm-classic} and one of its useful variants Lemma~\ref{lm-translated-oscillators}. We also present there Proposition~\ref{cor-lu0} and Proposition~\ref{lm-Giotto}, that both involve truncations by constant functions.

 In Section~2.2, we construct barriers with small oscillations and prescribed gradients. This leads to the proof of Theorem~\ref{thm-gen-Lag-1} under  (H1) (see Theorem~\ref{prop-della-francesca}). As a consequence, we also obtain Theorem~\ref{thm-synt-autonomous-1} under (HB) (see Corollary~\ref{corollary-autonomous-symetric}).

 In Section~2.3, we construct barriers with large oscillations for Lagrangians which are almost constant with respect to the state variable.  This implies Theorem~\ref{thm-gen-Lag-1} under  (H2) (see Proposition~\ref{prop-single-oscillator}). We then derive Theorem~\ref{thm-synt-product-1} under (Hi) and (Hii) (see Corollary~\ref{propositionH1H2}), and also Corollary~\ref{cor-single-direction}. 

Section~3 is devoted to autonomous Lagrangians and to the affine-minorant method.  It contains the summability result of Lemma~\ref{prop-affine-minorant} and its consequences Theorem~\ref{th-real-valued-affine-minorant} and Theorem~\ref{cor-barycentric-subgradients}. The former implies, for nonnegative autonomous Lagrangians, Theorem~\ref{thm-synt-autonomous-1} under (HA).

Section~4 deals with some specific classes of non-autonomous Lagrangians.  First, Theorem~\ref{prop:nonaton2026} settles the case of the Lagrangians not depending on the state variable \(s\). As a consequence, we obtain  Theorem~\ref{thm-synt-product-1} under (Hiii). We also state there Theorem~\ref{coro-other-truncations} which is useful for Lagrangians that are comparable, in a suitable sense, to autonomous Lagrangians. The main part of this section is devoted to the construction of barriers based on  left-compositions, namely functions of the form \(\pi_i\circ u\), where \(\pi_i(t)\) is, for  large or small values of \(t\in \R\), a sawtooth function with  small oscillations.  This leads to the proof of Theorem~\ref{coro-other-truncations-i} (see Theorem~\ref{coro-other-truncations}) and Theorem~\ref{thm-synt-product-1} under (Hiv) (see Corollary~\ref{cor-dellafrancesca}). 

\section*{Notation} 

Let \(\Omega\) be a bounded  open set in \(\R^N\) and \(p\in [1,N]\). 
We denote by \(\mathcal{C}^{1}_c(\Omega)\)  the space of \(\mathcal{C}^{1}\) functions with compact support on \(\Omega\). We then introduce the space \(W^{1,p}_{0}(\Omega)\) defined as the closure of \(\mathcal{C}^{1}_c(\Omega)\) in \(W^{1,p}(\Omega)\). Given \(\varphi\in W^{1,p}(\R^N)\cap L^{\infty}(\R^N)\), we denote by 
 \(W^{1,p}_{\varphi}(\Omega)\) the set
\[
 \varphi+W^{1,p}_{0}(\Omega)=\{u+\varphi : u\in W^{1,p}_0(\Omega)\}.
\]
Then every \(u\in W^{1,p}_\varphi(\Omega)\) belongs to \(L^{p^\#}(\Omega)\), where we have set
\[
p^\#:=
\begin{cases}
\dfrac{Np}{N-p},&\text{if }1\leq p<N,\\
N,&\text{if }p=N.
\end{cases}
\]

Let \(L:\Omega\times \R\times \R^N \to \R\) be a Carath\'eodory function and
\[
\mathcal{L}(u)=\int_{\Omega}L(x,u(x),\nabla u(x))\,dx, \qquad u\in W^{1,p}(\Omega).
\]

\begin{nota}
For every \((x,t,\xi)\in\Omega\times\R\times\R^N\), we set
\[
\overline L(x,t,\xi):=|L(x,t,\xi)|+|\xi|^p.
\]
This notation will be used throughout the paper.
\end{nota}

We denote by \(\mathcal{A}\) the set of those \(u\in W^{1,p}_{\varphi}(\Omega)\) such that \(L(x, u,\nabla u)\) is summable on \(\Omega\). Then \(\mathcal{L}\) is well-defined on \(\mathcal{A}\).

\begin{definition}
We say that \(\mathcal{L}\) is \(L^{\infty}(\Omega)\)-regular at some \(u\in \mathcal{A}\) when there exists a sequence \((u_k)_{k}\subset \mathcal{A}\cap L^{\infty}(\Omega)\) converging to \(u\) in \(W^{1,p}(\Omega)\) and such that \((L(x,u_k, \nabla u_k))_{k\geq 1}\) converges to \(L(x,u, \nabla u)\) in \(L^{1}(\Omega)\).
\end{definition}

\begin{nota}
For every real-valued function \(g\), we denote its positive and negative parts by
\[
g_+(t):=\max\{g(t),0\}, \qquad g_-(t):=\max\{-g(t),0\}.
\]    
For \(c\in L^\infty(\Omega)\), we set
\[
\operatorname*{ess\,osc}_{\Omega}c
:=\operatorname*{ess\,sup}_{\Omega}c-\operatorname*{ess\,inf}_{\Omega}c.
\]
\end{nota}

\section{General regularity criteria}
\subsection{Barrier truncations}

The following lemma provides the basic truncation mechanism used throughout the paper: suitable upper and lower barriers with evanescent energy on the corresponding tails yield bounded  approximations.
\begin{lemma}\label{lm-classic}
Let \(u\in\mathcal A\). Assume that there exist two sequences
\[
(c_k^+)_{k\geq1},\ (c_k^-)_{k\geq1}
\subset W^{1,p}(\Omega)\cap L^\infty(\Omega)
\]
such that
\[
\operatorname*{ess\,inf}_{\Omega}c_k^+\longrightarrow+\infty,
\qquad
\operatorname*{ess\,sup}_{\Omega}c_k^-\longrightarrow-\infty,
\]
and
\begin{align}
\int_{[u>c_k^+]}
\overline L(x,c_k^+,\nabla c_k^+)\,dx
&\longrightarrow0,
\label{eq-Padova-1}\\
\int_{[u<c_k^-]}
\overline L(x,c_k^-,\nabla c_k^-)\,dx
&\longrightarrow0.
\label{eq-Padova-2}
\end{align}
Then \(\mathcal L\) is \(L^\infty(\Omega)\)-regular at \(u\). More specifically, the functions \(u_k=\min(c_{k}^+, \max(u, c_{k}^-))\) converge in norm and in energy to \(u\).
\end{lemma}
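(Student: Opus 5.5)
Fix \(k\) large enough that \(\operatorname*{ess\,inf} c_k^+>\|\varphi\|_{L^\infty}\) and \(\operatorname*{ess\,sup} c_k^-<-\|\varphi\|_{L^\infty}\); in particular \(c_k^-<c_k^+\) a.e. First check that \(u_k=\min(c_k^+,\max(u,c_k^-))\) is admissible. It lies in \(W^{1,p}(\Omega)\cap L^\infty(\Omega)\) since lattice operations preserve \(W^{1,p}\), and \(c_k^-\le u_k\le c_k^+\). For the boundary condition, write
\[
u_k-\varphi=\min\bigl(c_k^+-\varphi,\max(u-\varphi,c_k^--\varphi)\bigr),
\]
where \(c_k^+-\varphi\ge0\ge c_k^--\varphi\) a.e. and \(u-\varphi\in W^{1,p}_0(\Omega)\). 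Such a truncation of a \(W^{1,p}_0\) function by functions of the appropriate signs stays in \(W^{1,p}_0(\Omega)\). One way to see this: approximate \(u-\varphi\) by \(\psi_n\in\mathcal C^1_c(\Omega)\), so that \(\min(c_k^+-\varphi,\max(\psi_n,c_k^--\varphi))\) has compact support in \(\Omega\) and belongs to \(W^{1,p}_0\); then pass to the limit using continuity of the lattice operations in \(W^{1,p}\).

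Next decompose \(\Omega\) into \(A_k=[c_k^-\le u\le c_k^+]\), \(P_k=[u>c_k^+]\) and \(M_k=[u<c_k^-]\). A.e. on \(A_k\) we have \(u_k=u\) and \(\nabla u_k=\nabla u\); this also holds on the level sets \([u=c_k^\pm]\), since gradients of \(W^{1,p}\) functions agree a.e. where the functions agree. On \(P_k\) we have \(u_k=c_k^+\) and \(\nabla u_k=\nabla c_k^+\), and similarly on \(M_k\). Hence
\[
\int_\Omega|L(x,u_k,\nabla u_k)-L(x,u,\nabla u)|\,dx\le\int_{P_k}\bigl(|L(x,c_k^+,\nabla c_k^+)|+|L(x,u,\nabla u)|\bigr)+\int_{M_k}(\cdots).
\]
This gives \(L(x,u_k,\nabla u_k)\in L^1\), so \(u_k\in\mathcal A\cap L^\infty\). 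For the convergence, note that \(P_k\subset[u>\operatorname*{ess\,inf}c_k^+]\) and \(M_k\subset[u<\operatorname*{ess\,sup}c_k^-]\), so \(|P_k|+|M_k|\to0\) because \(u\) is finite a.e. The \(L(x,u,\nabla u)\) terms then vanish by absolute continuity of the integral, and the barrier terms vanish by \eqref{eq-Padova-1}–\eqref{eq-Padova-2}, since \(|L|\le\overline L\).

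For norm convergence, the gradients satisfy
\[
\|\nabla u_k-\nabla u\|_p^p\le 2^{p-1}\Bigl(\int_{P_k\cup M_k}|\nabla u|^p+\int_{P_k}|\nabla c_k^+|^p+\int_{M_k}|\nabla c_k^-|^p\Bigr)\to0,
\]
again by absolute continuity and by the \(|\xi|^p\) part of \(\overline L\). For the \(L^p\) part, on \(P_k\) we have \(0<u-u_k=u-c_k^+<u\), so \(|u_k-u|\le|u|\); likewise on \(M_k\). Thus \(\|u_k-u\|_p^p\le\int_{P_k\cup M_k}|u|^p\to0\).

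The only delicate step is the membership \(u_k\in W^{1,p}_\varphi(\Omega)\). The barriers are not constants and need not have any trace property, so the standard argument for \(T_k\) must be replaced by the sign-based approximation argument above. The remaining estimates are routine.
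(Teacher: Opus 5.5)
Your proof is correct and follows essentially the same route as the paper's: you split \(\Omega\) into the set where \(u_k=u\) and the two tails, bound \(|u_k-u|\le|u|\) on the tails, and control gradients and energy densities by \(2^{p-1}\) times the barrier tail integrals plus \(\int\overline L(x,u,\nabla u)\) over sets of vanishing measure. The only additional content is your approximation argument, via \(\mathcal C^1_c\) functions and continuity of lattice operations in \(W^{1,p}\), showing \(u_k\in W^{1,p}_\varphi(\Omega)\) (which the paper simply asserts from \(c_k^-\le\varphi\le c_k^+\)), and your explicit check that \(u_k\in\mathcal A\).
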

\begin{proof}
Set
\[
M_k^+:=\operatorname*{ess\,inf}_{\Omega}c_k^+,
\qquad
M_k^-:=-\operatorname*{ess\,sup}_{\Omega}c_k^-.
\]
Then
\[
\lim_{k\to +\infty}M_k^\pm = +\infty. 
\]
Since \(\varphi\in L^\infty(\Omega)\), after discarding finitely many
terms, we have
\[
M_k^+>\|\varphi\|_{L^\infty(\Omega)}
\qquad\text{and}\qquad
M_k^->\|\varphi\|_{L^\infty(\Omega)}.
\]
Since
\(
c_k^+\geq M_k^+\)
 and 
\(c_k^-\leq -M_k^-\) a.e. in \(\Omega\),
it follows that
\[
c_k^-\leq\varphi\leq c_k^+
\qquad\text{a.e. in }\Omega.
\]
For every such \(k\), define
\[
u_k(x):=\min\{c_k^+(x),\max\{u(x),c_k^-(x)\}\}
=\begin{cases}
c_k^-(x),&\text{if }u(x)<c_k^-(x),\\
u(x),&\text{if }c_k^-(x)\leq u(x)\leq c_k^+(x),\\
c_k^+(x),&\text{if }u(x)>c_k^+(x).
\end{cases}
\]
Since \(u,c_k^\pm\in W^{1,p}(\Omega)\), the stability of \(W^{1,p}(\Omega)\) under pointwise minima and maxima gives
\[
u_k\in W^{1,p}(\Omega)\cap L^\infty(\Omega).
\]
Moreover, the inequalities \(c_k^-\leq\varphi\leq c_k^+\) imply that
\[
u_k\in W^{1,p}_\varphi(\Omega).
\]
Since
\(
[u>c_k^+]\subset[u>M_k^+]\),
\([u<c_k^-]\subset[u<-M_k^-]\), 
we have
\[
|u_k-u|^p
\leq |u|^p
\left(
\chi_{[u>M_k^+]}+\chi_{[u<-M_k^-]}
\right).
\]
Since \(u\in L^p(\Omega)\) and \(M_k^\pm\to+\infty\), it follows that
\[
u_k\longrightarrow u
\quad\text{in }L^p(\Omega).
\]
Furthermore,
\[
\begin{aligned}
\int_\Omega\bigl(
&|L(x,u,\nabla u)-L(x,u_k,\nabla u_k)|
+|\nabla u-\nabla u_k|^p
\bigr)\,dx\\
\leq{}&
\int_{[u>c_k^+]}
\Bigl(
|L(x,u,\nabla u)|
+|L(x,c_k^+,\nabla c_k^+)|
+2^{p-1}\bigl(|\nabla u|^p+|\nabla c_k^+|^p\bigr)
\Bigr)\,dx\\
&+\int_{[u<c_k^-]}
\Bigl(
|L(x,u,\nabla u)|
+|L(x,c_k^-,\nabla c_k^-)|
+2^{p-1}\bigl(|\nabla u|^p+|\nabla c_k^-|^p\bigr)
\Bigr)\,dx\\
&\le 2^{p-1}\left(\int_{[u>c_k^+]}\overline L(x,c_k^+,\nabla c_k^+)\,dx+ \int_{[u<c_k^-]}\overline L(x,c_k^-,\nabla c_k^-)\,dx+\right.\\&\left.\qquad\qquad\qquad\int_{[u>c_k^+]\cup [u<c_k^-]}\overline L(x,u,\nabla u)\,dx\right).
\end{aligned}
\]
Since
\(
[u>c_k^+]\subset[u>M_k^+]\),
\([u<c_k^-]\subset[u<-M_k^-]\)
and
\(
\overline L(x,u,\nabla u)\in L^1(\Omega)\),
we obtain
\[
\lim_{k\to +\infty}\int_{[u>c_k^+]\cup [u<c_k^-]}
\overline L(x,u,\nabla u)\,dx
=0.
\]
Combining these estimates with \eqref{eq-Padova-1}--\eqref{eq-Padova-2}, we obtain
\[
\nabla u_k\longrightarrow\nabla u
\quad\text{in }L^p(\Omega;\mathbb R^N)
\]
and
\[
L(x,u_k,\nabla u_k)
\longrightarrow
L(x,u,\nabla u)
\quad\text{in }L^1(\Omega).
\]
Thus \(u_k\to u\) in \(W^{1,p}(\Omega)\), and the conclusion follows.
\end{proof}
\begin{remark}
Lemma~\ref{lm-classic} is closely related to
\cite[Theorem~3.5]{MaricondaTreu2020}, where the upper and lower
barriers are affine functions with fixed gradients. In the present
lemma, the barriers may be arbitrary functions in
\(W^{1,p}(\Omega)\cap L^\infty(\Omega)\). No uniform bound on their
oscillations is required. 
\end{remark}

\begin{remark}\label{rk-b=0}
The most standard application of Lemma~\ref{lm-classic} occurs when it is possible to choose two sequences \(M_k^+,M_k^-\to+\infty\) such that
\[
\lim_{k\to +\infty}\int_{[u>M_k^+]}\overline L(x,M_k^+,0)\,dx =0,
\qquad
\lim_{k\to +\infty}\int_{[u<-M_k^-]}\overline L(x,-M_k^-,0)\,dx=0,
\]
and taking
\[
c_k^+\equiv M_k^+,
\qquad
c_k^-\equiv-M_k^-.
\]
In particular, such sequences exist when
\[
\liminf_{t\to+\infty}
\int_{[u>t]}\overline L(x,t,0)\,dx=0,
\qquad
\liminf_{t\to+\infty}
\int_{[u<-t]}\overline L(x,-t,0)\,dx=0.
\]
\end{remark}

Another situation where the  choice of barriers given in Remark~\ref{rk-b=0} is possible is detailed in the following statement dealing with \emph{autonomous} Lagrangians. Remember that  \(L\) is said to be autonomous when there exists a continuous function \(H:\R\times \R^N \to \R\) such that \(L(x,t,\xi)=H(t,\xi)\) for every \((x,t,\xi)\in \Omega \times \R\times \R^N\).
In the autonomous setting, we use the corresponding notation
\[
\overline H(t,\xi):=|H(t,\xi)|+|\xi|^p.
\]
Thus
\[
\overline L(x,t,\xi)=\overline H(t,\xi)
\]
for every \((x,t,\xi)\in\Omega\times\R\times\R^N\).

\begin{proposition}\label{cor-lu0}
Assume that
\[
L(x,t,\xi)=H(t,\xi)
\]
for some continuous function \(H:\R\times\R^N\to\R\). Then, \(\mathcal{L}\) is \(L^{\infty}(\Omega)\)-regular at every \(u\in \mathcal{A}\) such that \(H(u, 0)\in L^{1}(\Omega)\).
\end{proposition}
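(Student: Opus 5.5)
We apply Lemma~\ref{lm-classic} with constant barriers, as in Remark~\ref{rk-b=0}: we look for sequences \(M_k^\pm\to+\infty\) with
\[
\int_{[u>M_k^+]}\overline H(M_k^+,0)\,dx\to0,\qquad \int_{[u<-M_k^-]}\overline H(-M_k^-,0)\,dx\to0 .
\]
Since \(\overline H(t,0)=|H(t,0)|\), these integrals equal \(|H(M_k^+,0)|\,|[u>M_k^+]|\) and \(|H(-M_k^-,0)|\,|[u<-M_k^-]|\). So the whole matter reduces to a one-dimensional statement about the continuous function \(f(t):=|H(t,0)|\) and the distribution function of \(u\). The hypothesis to use is \(f\circ u\in L^1(\Omega)\).

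\textbf{Key step.} We prove that \(\liminf_{t\to+\infty} f(t)\,|[u>t]|=0\), and symmetrically for \(-u\). Argue by contradiction. Suppose there are \(\varepsilon>0\) and \(T\) with \(f(t)\,|[u>t]|\geq\varepsilon\) for all \(t\geq T\). Set \(\mu(t):=|[u>t]|\), which is nonincreasing and tends to \(0\); we may assume \(\mu(t)>0\) for all large \(t\), since otherwise the liminf is trivially \(0\) (the set is empty). Then \(f(t)\geq\varepsilon/\mu(t)\) for \(t\geq T\).

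Next, the layer-cake formula with respect to the measure \(d(-\mu)\), i.e.\ the distribution of \(u\), gives
\[
\int_{[u>T]} f(u)\,dx=\int_{(T,\infty)} f(t)\,d(-\mu)(t)\geq \varepsilon\int_{(T,\infty)}\frac{d(-\mu)(t)}{\mu(t)} .
\]
The right-hand side is infinite. This is the discrete analogue of \(\int d\mu/\mu=\log\mu=\infty\) as \(\mu\to0\). To make it rigorous, choose levels \(t_j\) with \(\mu(t_{j+1})\leq\mu(t_j)/2\). On \([t_j<u\leq t_{j+1}]\) we have \(f(u)\geq\varepsilon/\mu(u)\geq\varepsilon/\mu(t_j)\), because \(\mu(u(x))\leq\mu(t_j)\) there. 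This set has measure \(\mu(t_j)-\mu(t_{j+1})\geq\mu(t_j)/2\), so each layer contributes at least \(\varepsilon/2\). Summing over \(j\) contradicts \(f\circ u\in L^1\).

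There is a subtlety: at an atom of \(u\), \(\mu(u(x))\) could be smaller than \(\mu\) just below. We handle this using the value \(f(u(x))\) directly. On \([u=s]\) with \(s\geq T\), we have \(f(s)\geq\varepsilon/\mu(s)\), while \(\mu(s)\) counts only \([u>s]\). If the measure of \([u=s]\) is large, this only helps the lower bound, since the contribution \(f(s)|[u=s]|\) is then large. It is safer to work with \(\tilde\mu(t)=|[u\geq t]|\geq\mu(t)\) in the layering, noting \(f(t)\geq\varepsilon/\mu(t)\geq\varepsilon/\tilde\mu(t)\). The choice of levels \(t_j\) with a halving property may then need a slight adjustment when \(\mu\) jumps. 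I expect this measure-theoretic bookkeeping to be the main (minor) obstacle.

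\textbf{Conclusion.} With \(M_k^\pm\) chosen along sequences realizing the two liminfs, the \(|\nabla c_k^\pm|^p\) terms vanish because the barriers are constant. Lemma~\ref{lm-classic} then applies, and the truncations \(T\)-type \(u_k=\min(M_k^+,\max(u,-M_k^-))\) converge to \(u\) in norm and in energy.
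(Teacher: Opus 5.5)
Your proof is correct. Its outer structure matches the paper's: constant barriers \(c_k^\pm\equiv\pm M_k^\pm\), so that Lemma~\ref{lm-classic} reduces everything to \(|H(M_k^+,0)|\,|[u>M_k^+]|\to0\) and its negative counterpart. The difference is in how you prove that one-dimensional key step. The paper (Lemma~\ref{lm-mkpm}) argues directly: for each \(k\) it picks \(M_k^+\geq k\) with \(|h(M_k^+)|\leq\inf_{t\geq k}|h(t)|+1\). Then \(|h(M_k^+)|\leq|h(u(x))|+1\) on \([u\geq M_k^+]\), and hence \(|[u\geq M_k^+]|\,|h(M_k^+)|\leq\int_{[u\geq k]}(|h\circ u|+1)\,dx\to0\). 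This takes two lines, is constructive, and gives an explicit bound in terms of the tail of \(h\circ u\). You instead prove \(\liminf_{t\to+\infty}f(t)\mu(t)=0\) by contradiction, through a dyadic layering of the distribution function. That statement is equivalent to the existence of such a sequence, so nothing is lost, but the route is heavier. Both arguments only use that \(f\) is Borel; continuity of \(H\) plays no role at this stage. Finally, the atom issue you flag is not a real obstacle, and the adjustment with \(\tilde\mu\) is unnecessary. If \(u(x)>t_j\), then \([u>u(x)]\subset[u>t_j]\), so \(\mu(u(x))\leq\mu(t_j)\) regardless of atoms. The levels only need the inequality \(\mu(t_{j+1})\leq\mu(t_j)/2\), which can always be arranged since \(\mu(t)\to0\) and \(\mu(t_j)>0\); jumps of \(\mu\) only make this easier. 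Positivity of \(\mu\) on \([T,\infty)\) is automatic under your contradiction hypothesis \(f(t)\mu(t)\geq\varepsilon\).
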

\begin{proof}
Let \(u\in \mathcal{A}\) such that \(H(u, 0)\in L^{1}(\Omega)\).
Apply Lemma~\ref{lm-mkpm} to the measurable function \(h:t\mapsto H(t,0)\) to get two sequences \((M_{k}^+)_{k\geq 1}\) and \((M_{k}^-)_{k\geq 1}\) converging to \(+\infty\) and such that 
\[
\lim_{k\to +\infty}|[u>M_k^+]|\,|H(M_k^+,0)|=0,\quad 
\lim_{k\to +\infty}|[u<-M_k^-]|\,|H(-M_k^-,0)|=0.
\]
Then, in view of Remark~\ref{rk-b=0} and  Lemma~\ref{lm-classic},  the functions \(u_k:=\min(M_k^+, \max (u, -M_k^-))\) belong to \(W^{1,p}_{\varphi}(\Omega)\cap L^{\infty}(\Omega)\) for every \(k\) sufficiently large and satisfy
\[
\lim_{k\to +\infty} \big(\|u_k-u\|_{W^{1,p}(\Omega)} +\|H(u_k, \nabla u_k)-H(u, \nabla u)\|_{L^{1}(\Omega)} \big) =0.
\]
\end{proof}

Constant barriers can also be useful in the non-autonomous setting:

\begin{proposition}\label{lm-Giotto}
Assume that there exist a continuous function \(a:\R\to\R\) and a Carath\'eodory function \(g:\Omega\times\R^N\to\R\) such that
\[
L(x,t,\xi)=a(t)g(x,\xi)
\]
for a.e. \(x\in\Omega\) and every \((t,\xi)\in\R\times\R^N\). Assume also that
\(
g(\cdot,0)\in L^1(\Omega)\). Then,  \(\mathcal L\) is \(L^\infty(\Omega)\)-regular at every  \(u\in\mathcal A\) such that 
\(
(a\circ u) g(\cdot,0)\in L^1(\Omega)\).
\end{proposition}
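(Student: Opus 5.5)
We use constant barriers $c_k^+\equiv M_k^+$, $c_k^-\equiv -M_k^-$ in Lemma~\ref{lm-classic}, exactly as in Proposition~\ref{cor-lu0}. Since $\nabla c_k^\pm=0$, conditions \eqref{eq-Padova-1}--\eqref{eq-Padova-2} become
\[
\int_{[u>M_k^+]}|a(M_k^+)|\,|g(x,0)|\,dx\longrightarrow0,
\qquad
\int_{[u<-M_k^-]}|a(-M_k^-)|\,|g(x,0)|\,dx\longrightarrow0 .
\]
So everything reduces to choosing levels $M_k^\pm\to+\infty$ at which these tail integrals vanish. The available information is that $(a\circ u)g(\cdot,0)\in L^1(\Omega)$ and $g(\cdot,0)\in L^1(\Omega)$.

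We introduce the finite measure $\mu:=|g(\cdot,0)|\,dx$ on $\Omega$ and rewrite the required quantities as $|a(M_k^+)|\,\mu([u>M_k^+])$ and $|a(-M_k^-)|\,\mu([u<-M_k^-])$. The hypothesis says exactly that $\int_\Omega |a\circ u|\,d\mu<+\infty$.

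We then run the argument behind Lemma~\ref{lm-mkpm}, with Lebesgue measure replaced by $\mu$. (If that lemma is stated for a general finite measure, we simply invoke it.) Suppose that, on the contrary, there were $\varepsilon>0$ and $T>0$ such that $|a(t)|\,\mu([u>t])\geq\varepsilon$ for every $t\geq T$. Set $F(t):=\mu([u>t])$, a nonincreasing function. By the layer-cake formula,
\[
\int_{[u>T]}|a(u)|\,d\mu=\int_T^{+\infty}|a(t)|\,d(-F)(t).
\]
We would like to compare this with $\int_T^\infty \varepsilon\, dF/F$, which diverges logarithmically since $F(t)\to0$. Thus $\int_T^\infty \varepsilon\,\frac{d(-F)}{F}=+\infty$, which would contradict integrability.

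Some care is needed because $F$ may jump. On a jump, $[u=t]$ carries mass, and $|a(t)|\,\mu([u=t])$ is bounded by $\int_{[u=t]}|a(u)|\,d\mu$, so the discrete part is handled directly. Alternatively, one can pass to a sum over a dyadic decomposition of the values of $F$ to make the divergence argument clean.

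This contradiction shows that $\liminf_{t\to+\infty}|a(t)|\,\mu([u>t])=0$, and symmetrically for the negative tail. This yields sequences $M_k^\pm\to+\infty$ with the two products tending to zero.

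It remains to check the $|\xi|^p$ part of $\overline L$, which vanishes for constant barriers. Lemma~\ref{lm-classic} then gives that $u_k=\min(M_k^+,\max(u,-M_k^-))$ converges to $u$ in norm and in energy. Membership $u_k\in\mathcal A$ follows as well: on the tails, $L(x,u_k,\nabla u_k)=a(\pm M_k^\pm)g(x,0)$ is integrable because $g(\cdot,0)\in L^1(\Omega)$, and elsewhere $L(x,u_k,\nabla u_k)$ coincides with $L(x,u,\nabla u)$.

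The main obstacle is the measure-theoretic selection of the levels in the second step, in particular handling possible atoms of the distribution of $u$ under $\mu$. The rest is a direct application of Lemma~\ref{lm-classic}.
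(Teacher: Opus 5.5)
Your proof is correct, and its skeleton is the paper's: constant barriers in Lemma~\ref{lm-classic}, so that everything reduces to choosing the levels $M_k^\pm$. The difference lies entirely in how you select the levels.

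The paper uses no distributional argument. For each $k$ it fixes $M_k^+\geq k$ with $|a(M_k^+)|\leq\inf_{t\geq k}|a(t)|+\frac1k$. It then notes that $|a(M_k^+)|\leq|a(u(x))|+\frac1k$ on $[u>M_k^+]$, and integrates against $|g(\cdot,0)|$:
\[
\int_{[u>M_k^+]}|a(M_k^+)g(x,0)|\,dx\leq\int_{[u>M_k^+]}|a(u)g(x,0)|\,dx+\frac1k\|g(\cdot,0)\|_{L^1(\Omega)}\longrightarrow0 .
\]
This is exactly the proof of Lemma~\ref{lm-mkpm} run with $\mu=|g(\cdot,0)|\,dx$ in place of Lebesgue measure. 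So your parenthetical instinct was right, but the contradiction argument you then wrote is not that lemma's argument.

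Your layer-cake route proves the equivalent statement $\liminf_{t\to+\infty}|a(t)|\mu([u>t])=0$, with noticeably more machinery. The paper's near-infimum choice costs two lines and never meets atoms. It also yields levels that depend only on $a$, so one sequence works for every admissible $u$.

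On the point you flag as the main obstacle, your justification is not the relevant one. The remark that $|a(t)|\mu([u=t])$ is bounded by $\int_{[u=t]}|a(u)|\,d\mu$ is an identity and does not address the issue. What you need is that $\int_{(T,\infty)}\frac{d(-F)}{F}=+\infty$ even when $F$ jumps. This follows from $x-1\geq\log x$: a jump at $\tau$ contributes $\frac{F(\tau^-)-F(\tau)}{F(\tau)}\geq\log\frac{F(\tau^-)}{F(\tau)}$. Hence the integral dominates $\log F(T)-\lim_{t\to+\infty}\log F(t)=+\infty$, using $F(T)>0$ and $F(t)\to0$ because $\mu$ is finite. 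With that line inserted, your argument is complete.
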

\begin{proof}
For every \(k\geq1\), choose \(M_k^+\geq k\) and \(M_k^-\geq k\) such that
\[
|a(M_k^+)|
\leq\inf_{t\geq k}|a(t)|+\frac1k,
\qquad
|a(-M_k^-)|
\leq\inf_{t\leq-k}|a(t)|+\frac1k.
\]
Let \(u\in\mathcal A\) such that 
\((a\circ u) g(\cdot,0)\in L^1(\Omega)\).
If \(u(x)>M_k^+\), then \(u(x)\geq k\), and hence
\[
|a(M_k^+)|
\leq|a(u(x))|+\frac1k.
\]
Therefore
\[
\begin{aligned}
\int_{[u>M_k^+]}|L(x,M_k^+,0)|\,dx
&=\int_{[u>M_k^+]}|a(M_k^+)g(x,0)|\,dx\\
&\leq\int_{[u>M_k^+]}|a(u)g(x,0)|\,dx
+\frac1k\int_\Omega|g(x,0)|\,dx\\
&\longrightarrow0.
\end{aligned}
\]
Similarly, 
\[
\int_{[u<-M_k^-]}|L(x,-M_k^-,0)|\,dx
\longrightarrow0.
\]
Then, by Remark~\ref{rk-b=0},   the functions \(u_k:=\min(M_k^+, \max (u, -M_k^-))\) belong to \(W^{1,p}(\Omega)\cap L^{\infty}(\Omega)\) and give the desired approximating sequence.

\end{proof}

Constant barriers provide the most simple way  to  construct approximating sequences. However, such a construction is not always possible. One may then rely on Lemma~\ref{lm-classic} to produce barriers that are more elaborate. It is often convenient to define them as the sum of  constants and  functions with controlled oscillations. Let us reformulate Lemma~\ref{lm-classic} in this spirit:

\begin{lemma}\label{lm-translated-oscillators}
Let \(u\in\mathcal A\). Assume that there exist two sequences of numbers \((M_{k}^{\pm})_{k\geq 1}\) and two sequences of functions \((b_{k}^{\pm})_{k\geq 1}\subset W^{1,p}(\Omega)\cap L^{\infty}(\Omega)\) such that, denoting by \(\omega_k^\pm:=\operatorname*{ess\,osc}_{\Omega}b_k^\pm\), one has
\[
\lim_{k\to +\infty}M_k^\pm=0
\]
and
\[
\lim_{k\to +\infty}\int_{[u>M_k^+]}\max_{s\in [M_k^+, M_k^++\omega_k^+]} \overline L(x,s,\nabla b_k^+(x))\,dx = 0,
\]
\[
\lim_{k\to +\infty}\int_{[u<-M_k^-]}\max_{s\in [-M_k^--\omega_k^-,-M_{k}^{-}]} \overline L(x,s,\nabla b_k^-(x))\,dx=0.
\]
Then \(\mathcal L\) is \(L^\infty(\Omega)\)-regular at \(u\).
\end{lemma}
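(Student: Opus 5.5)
The plan is to reduce directly to Lemma~\ref{lm-classic} by translating the functions \(b_k^\pm\) so that they become barriers. I read the hypothesis \(\lim_{k} M_k^\pm=0\) as a typo for \(\lim_{k} M_k^\pm=+\infty\). That is what the construction requires: with \(M_k^\pm\to 0\) the translated functions defined below would not go to \(\pm\infty\), and Lemma~\ref{lm-classic} would not apply.

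First, I define the barriers by
\[
c_k^+:=M_k^+ + b_k^+-\operatorname*{ess\,inf}_{\Omega}b_k^+,
\qquad
c_k^-:=-M_k^- + b_k^- -\operatorname*{ess\,sup}_{\Omega}b_k^-.
\]
These functions belong to \(W^{1,p}(\Omega)\cap L^\infty(\Omega)\), since they differ from \(b_k^\pm\) by constants. In particular \(\nabla c_k^\pm=\nabla b_k^\pm\) a.e. Moreover, a.e. in \(\Omega\),
\[
M_k^+\leq c_k^+\leq M_k^++\omega_k^+,
\qquad
-M_k^--\omega_k^-\leq c_k^-\leq -M_k^-.
\]
It follows that \(\operatorname*{ess\,inf}_\Omega c_k^+\geq M_k^+\to+\infty\) and \(\operatorname*{ess\,sup}_\Omega c_k^-\leq -M_k^-\to-\infty\).

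Second, I verify \eqref{eq-Padova-1}. The lower bound \(c_k^+\geq M_k^+\) gives \([u>c_k^+]\subset[u>M_k^+]\) up to a null set. At a.e. point of \([u>c_k^+]\), the value \(c_k^+(x)\) lies in \([M_k^+,M_k^++\omega_k^+]\), so
\[
\overline L(x,c_k^+(x),\nabla c_k^+(x))\leq \max_{s\in[M_k^+,M_k^++\omega_k^+]}\overline L(x,s,\nabla b_k^+(x)).
\]
Since this maximum is nonnegative, I can enlarge the domain of integration to \([u>M_k^+]\). The hypothesis then yields \eqref{eq-Padova-1}. The proof of \eqref{eq-Padova-2} is symmetric: use \([u<c_k^-]\subset[u<-M_k^-]\) and the range \([-M_k^--\omega_k^-,-M_k^-]\) of \(c_k^-\).

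The only technical point is measurability of \(x\mapsto\max_{s\in I}\overline L(x,s,\nabla b_k^\pm(x))\) for a compact interval \(I\). Because \(L\) is Carath\'eodory, \(s\mapsto \overline L(x,s,\xi)\) is continuous, so the maximum equals a supremum over the rational points of \(I\) (together with its endpoints). Each function \(x\mapsto\overline L(x,s,\nabla b_k^\pm(x))\) is measurable, as the composition of a Carath\'eodory function with a measurable map, so this countable supremum is measurable. In any case the statement implicitly assumes these integrals make sense.

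With both conditions verified, Lemma~\ref{lm-classic} gives that \(u_k=\min(c_k^+,\max(u,c_k^-))\) converges to \(u\) in norm and in energy, which is the claimed regularity. I do not expect a genuine obstacle beyond the correction of the hypothesis on \(M_k^\pm\).
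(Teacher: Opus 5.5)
Your proposal is correct and is essentially the paper's proof: the same translated barriers \(c_k^+=M_k^++b_k^+-\operatorname*{ess\,inf}_{\Omega}b_k^+\) and \(c_k^-=-M_k^--(\operatorname*{ess\,sup}_{\Omega}b_k^--b_k^-)\), the same inclusions \([u>c_k^+]\subset[u>M_k^+]\) and \([u<c_k^-]\subset[u<-M_k^-]\), the same pointwise bound by the maximum over the range of \(c_k^\pm\), and then Lemma~\ref{lm-classic}. Your reading of \(\lim M_k^\pm=0\) as a typo for \(+\infty\) is confirmed by the paper's own proof, which uses \(\operatorname*{ess\,inf}_{\Omega}c_k^+=M_k^+\to+\infty\); your measurability remark is a harmless addition that the paper leaves implicit.
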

\begin{proof}
Define
\[
c_k^+:=
M_k^++b_k^+-\operatorname*{ess\,inf}_{\Omega}b_k^+
\]
and
\[
c_k^-:=
-M_k^--\bigl(\operatorname*{ess\,sup}_{\Omega}b_k^--b_k^-\bigr).
\]
Then
\[
c_k^\pm\in W^{1,p}(\Omega)\cap L^\infty(\Omega),
\]
\[
\operatorname*{ess\,inf}_{\Omega}c_k^+=M_k^+\longrightarrow+\infty,
\qquad
\operatorname*{ess\,sup}_{\Omega}c_k^-=-M_k^-\longrightarrow-\infty,
\]
and
\[
\nabla c_k^\pm=\nabla b_k^\pm
\quad\text{a.e. in }\Omega.
\]
Moreover,
\[
M_k^+\leq c_k^+\leq M_k^++\omega_k^+,
\qquad
-M_k^--\omega_k^-\leq c_k^-\leq-M_k^-
\quad\text{a.e. in }\Omega,
\]
and thus, 
\[
[u>c_k^+]\subset[u>M_k^+],
\qquad
[u<c_k^-]\subset[u<-M_k^-].
\]
Consequently,
\[
\int_{[u>c_k^+]}
\overline L(x,c_k^+,\nabla c_k^+)\,dx
\leq
\int_{[u>M_k^+]}\max_{s\in [M_k^+, M_k^++\omega_k^+]} \overline L(x,s,\nabla b_k^+(x))\,dx
\longrightarrow0
\]
and
\[
\int_{[u<c_k^-]}
\overline L(x,c_k^-,\nabla c_k^-)\,dx
\leq
\int_{[u<-M_k^-]}\max_{s\in [-M_k^--\omega_k^-,-M_{k}^{-}]} \overline L(x,s,\nabla b_k^-(x))\,dx
\longrightarrow0.
\]
The conclusion follows from Lemma~\ref{lm-classic}.
\end{proof}
\subsection{Barriers with small oscillation}

In the main result of this section Theorem~\ref{prop-della-francesca} below, we discard the Lavrentiev gap between \(W^{1,p}(\Omega)\) and  \(W^{1,p}(\Omega)\cap L^{\infty}(\Omega)\) under the assumption that for every \(t\in \R\),  the Lagrangian \(L(x,t,\xi)\) satisfies a good estimate involving only finitely many \(\xi\)'s. This result is obtained as a consequence of Lemma~\ref{lm-translated-oscillators} and the construction of suitable barriers. Such barriers must have small oscillations while  taking their gradients in a finite given set. This is indeed possible as  the following lemma shows:

\begin{lemma}\label{lm-oscillating-functions}
Let \(\xi_1, \dots, \xi_m\) be \(m\) vectors in \(\R^N\) such that \(0\) belongs to their convex hull.  Then for every  \(\varepsilon>0\), there exists \(b_{\varepsilon}\in W^{1,\infty}(\R^N)\) such that \(0\leq b_{\varepsilon}(x)\leq \varepsilon\) for every \(x\in \R^N\) and \(\nabla b_\varepsilon(x)\in \{\xi_1, \dots, \xi_m\}\) for a.e. \(x\in \R^N\).
\end{lemma}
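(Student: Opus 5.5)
Since \(0\) lies in the convex hull of \(\xi_1,\dots,\xi_m\), the plan is to build \(b_\varepsilon\) as a periodic lower envelope of translates of a single piecewise affine "cone" function whose slopes are among the \(\xi_j\).

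\emph{Reduction and the cone function.} By Carath\'eodory's theorem, after relabelling there is a subfamily \(\xi_1,\dots,\xi_r\) and weights \(\lambda_1,\dots,\lambda_r>0\) with \(\sum_j\lambda_j=1\) and \(\sum_j\lambda_j\xi_j=0\). If \(r=1\), then \(\xi_1=0\) and \(b_\varepsilon\equiv 0\) works. Otherwise let \(V:=\operatorname{span}\{\xi_1,\dots,\xi_r\}\), let \(P_V\) be the orthogonal projection onto \(V\), and set
\[
g(x):=\max_{1\leq j\leq r}\langle \xi_j,x\rangle .
\]
The key property is coercivity on \(V\): there is \(c>0\) with \(g(x)\geq c|P_Vx|\) for all \(x\). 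Both sides are positively homogeneous and depend only on \(P_Vx\), so it suffices to show \(g>0\) on \(V\setminus\{0\}\). Indeed, \(\sum_j\lambda_j\langle\xi_j,x\rangle=0\) with \(\lambda_j>0\) forces \(g(x)\geq0\). If \(g(x)=0\), then all \(\langle\xi_j,x\rangle\) are \(\leq 0\) with a zero weighted sum, hence all vanish, so \(x\perp V\). For \(x\in V\) this gives \(x=0\). Compactness of the unit sphere of \(V\) then yields \(c\). Also \(g\) is Lipschitz with constant \(\Lambda:=\max_j|\xi_j|\).

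\emph{Periodization.} Fix \(\delta>0\) and a lattice \(\Lambda_\delta:=\delta\,\mathbb Z e_1\oplus\dots\oplus\delta\,\mathbb Z e_d\subset V\), where \(e_1,\dots,e_d\) is an orthonormal basis of \(V\). Define
\[
b(x):=\inf_{z\in\Lambda_\delta} g(x-z).
\]
Then \(b\geq0\). Choosing \(z\) with \(|P_Vx-z|\leq \sqrt d\,\delta\), and using that \(g(x-z)\) depends only on \(P_Vx-z\), gives \(b(x)\leq \Lambda\sqrt d\,\delta\). Taking \(\delta=\varepsilon/(\Lambda\sqrt d)\) yields \(0\leq b\leq\varepsilon\).

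\emph{Gradients.} This is the main point to check. Local finiteness comes from coercivity: if \(|x-x_0|<1\), any \(z\) with \(g(x-z)\leq \varepsilon\) satisfies \(|P_Vx_0-z|\leq \varepsilon/c+1\). Hence near \(x_0\) the infimum is a minimum over finitely many \(z\). So, near \(x_0\), \(b\) is a finite minimum of finite maxima of affine functions \(x\mapsto\langle\xi_j,x-z\rangle\), i.e. a Lipschitz piecewise affine function. At every point of differentiability, a min/max of finitely many affine functions has gradient equal to the gradient of one of the active pieces. The points where this fails lie on finitely many hyperplanes, a null set. Hence \(\nabla b\in\{\xi_1,\dots,\xi_r\}\subset\{\xi_1,\dots,\xi_m\}\) a.e.

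Moreover \(b\) is globally Lipschitz with constant \(\Lambda\), being an infimum of \(\Lambda\)-Lipschitz functions, and it is bounded. Therefore \(b_\varepsilon:=b\in W^{1,\infty}(\R^N)\). The only delicate step is the coercivity \(g\geq c|P_V\cdot|\), which is exactly where the strictly positive barycentric weights are used.
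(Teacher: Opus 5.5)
Your proof is correct, but it takes a genuinely different route from the paper.

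\textbf{The paper's route.} The paper reduces to an affinely independent subfamily with $0$ in the relative interior of its convex hull, and rotates the span to $\R^{m-1}\times\{0\}$. It then invokes a convex integration result (Cellina, or Dacorogna--Marcellini, Corollary~3.2). This gives a Lipschitz function on the cube $[0,1]^{m-1}$ that vanishes on the boundary and has gradient a.e.\ in the prescribed set. The zero boundary values make the $1$-periodic extension Lipschitz. The oscillation is then made small by shifting, extending constantly in the orthogonal variables, and rescaling $x\mapsto\frac{\varepsilon}{2M}\widehat b(2Mx/\varepsilon)$.

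\textbf{Your route.} You write the periodic function down explicitly, as the lower envelope of lattice translates of $g(x)=\max_j\langle\xi_j,x\rangle$. The strictly positive barycentric weights play the role of the paper's relative-interior reduction and give the coercivity $g\geq c|P_V\cdot|$. Coercivity yields local finiteness, hence the piecewise affine structure and the gradient constraint. The lattice mesh $\delta$ controls $\sup b$ directly, so no fixed profile needs to be rescaled.

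\textbf{What each buys.} Your argument is self-contained and elementary. It never solves the Dirichlet problem $\nabla b\in\{\xi_1,\dots,\xi_m\}$ with $b=0$ on the boundary of a cube. That is more than the lemma needs, since only small oscillation is required, and its standard proofs use covering or Baire-category arguments. Your construction also gives the explicit Lipschitz bound $\max_j|\xi_j|$. The paper's argument is shorter, modulo the citation.

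\textbf{One small point.} When $r\geq2$ but all $\xi_j$ vanish, $V=\{0\}$ and your choice $\delta=\varepsilon/(\Lambda\sqrt d)$ is undefined. This case should be absorbed into the trivial one, as the paper does by setting $b_\varepsilon\equiv0$ whenever some $\xi_i=0$.
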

\begin{proof} 
If one of the \(\xi_i\)'s is zero, then one can take \(b_{\varepsilon}\equiv 0\). Otherwise, by removing some of the \(\xi_i\)'s if necessary,  one can assume that \(0\) belongs to the relative interior of \(\operatorname{co}(\xi_1, \dots, \xi_m)\) and that the affine space \(E\) generated by \(\xi_1, \dots, \xi_m\)  has dimension \(m-1\). Assume for simplicity that \(E=\R^{m-1}\times \{0_{N-m+1}\}\) and write \(\xi_i=(\xi_{i}',0_{N-m+1})\) with \(\xi_{i}'\in \R^{m-1}\). We can thus apply \cite{Cellina} or \cite[Corollary 3.2]{DacorognaMarcellini} to construct a Lipschitz function \(b:[0,1]^{m-1}\to \R\) such that \(\nabla b(x')\in \{\xi_{1}', \dots, \xi_{m}'\}\) for a.e. \(x'\in (0,1)^{m-1}\) and \(b=0\) on the boundary of the cube \([0,1]^{m-1}\).
We next define \(\widetilde{b}=b+M\) where \(M:=\|b\|_{L^{\infty}([0,1]^{m-1})}\) so that \(0\leq \widetilde{b}\leq 2M \) on \([0,1]^{m-1}\).
We then extend \(\widetilde{b}\) on \(\R^{m-1}\) by \(1\) periodicity: for every \(x'\in \R^{m-1}\), one sets \(\overline{b}(x')=\widetilde{b}([x'])\), where \([x']\in [0,1]^{m-1}\) denotes the vector obtained by taking the fractional parts of the  coordinates of \(x'\). Finally, we extend \(\overline{b}\) to \(\R^{N}\) as follows:
\[
\forall (x',x'')\in \R^{m-1}\times \R^{N-m+1}, \qquad \widehat{b}(x',x'')=\overline{b}(x').
\]
Observe that for a.e. \((x',x'')\in \R^{m-1}\times \R^{N-m+1}\),
\[
\nabla \widehat{b}(x',x'')\in\{(\xi_1',0),\dots,(\xi_m',0)\}=\{\xi_1,\dots,\xi_m\}.
\]
Then the functions \(b_{\varepsilon}(x):=\frac{\varepsilon}{2M} \widehat{b}(2M x/\varepsilon)\)  satisfy all the required properties.
\end{proof}

The function \(b_\varepsilon\) defined in Lemma~\ref{lm-oscillating-functions} is the main technical ingredient in the construction of barriers arising in the next approximation result:

\begin{theorem}
\label{prop-della-francesca}
Let \(q\in L^1(\Omega)\) and \(h:\R\to[0,+\infty)\) a Borel map. Assume that for every \(s\in\R\), there exist an integer \(m_s\geq1\), vectors
\(
\xi_1^s,\dots,\xi_{m_s}^s\in\R^N
\)
such that
\[
0\in\operatorname{co}\{\xi_1^s,\dots,\xi_{m_s}^s\}
\]
and  \(\delta_s>0\) such that, for a.e. \(x\in\Omega\) and every \(t\in[s-\delta_s,s+\delta_s]\),
\begin{equation}\label{eq-local-balanced-control}
\sum_{j=1}^{m_s}\overline L(x,t,\xi_j^s)
\leq h(s)+q(x).
\end{equation}
Then \(\mathcal L\) is \(L^\infty(\Omega)\)-regular at every \(u\in\mathcal A\) such that \(h\circ u\in L^{1}(\Omega)\).
\end{theorem}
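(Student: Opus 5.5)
The plan is to apply Lemma~\ref{lm-translated-oscillators}, reading its hypothesis as \(M_k^\pm\to+\infty\), which is what its proof uses. The barriers come from the small-oscillation functions of Lemma~\ref{lm-oscillating-functions}, built with the vectors attached to the levels \(s=M_k^+\) and \(s=-M_k^-\).

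Fix \(u\in\mathcal A\) with \(h\circ u\in L^1(\Omega)\). First I choose the levels. I apply Lemma~\ref{lm-mkpm} (already invoked in the proof of Proposition~\ref{cor-lu0}) to the Borel function \(h\) and to \(u\). This gives sequences \(M_k^\pm\to+\infty\) such that
\[
|[u>M_k^+]|\,h(M_k^+)\to0,
\qquad
|[u<-M_k^-]|\,h(-M_k^-)\to0.
\]
Choosing these levels is the step I expect to need the most care. Nothing in the hypotheses controls \(h(s)\) uniformly in \(s\), so the levels must be tuned to the distribution of \(u\), and the summability of \(h\circ u\) is exactly what makes this possible.

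Next I build the barriers. For \(s=M_k^+\) I take the vectors \(\xi_1^s,\dots,\xi_{m_s}^s\) and the radius \(\delta_s\) from the hypothesis. Lemma~\ref{lm-oscillating-functions} with \(\varepsilon=\delta_s\) gives \(b_k^+\in W^{1,\infty}(\R^N)\) with \(0\leq b_k^+\leq\delta_s\) and \(\nabla b_k^+(x)\in\{\xi_1^s,\dots,\xi_{m_s}^s\}\) for a.e.\ \(x\). Its restriction to \(\Omega\) lies in \(W^{1,p}(\Omega)\cap L^\infty(\Omega)\), since \(\Omega\) is bounded, and \(\omega_k^+:=\operatorname*{ess\,osc}_\Omega b_k^+\leq\delta_s\). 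I build \(b_k^-\) in the same way from the data at \(s=-M_k^-\).

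Then I estimate the energy on the tails. Every \(t\in[M_k^+,M_k^++\omega_k^+]\) lies in \([s-\delta_s,s+\delta_s]\). For a.e.\ \(x\), \(\nabla b_k^+(x)\) equals one of the vectors \(\xi_j^s\), so a single term of the sum in \eqref{eq-local-balanced-control} is bounded by the whole sum (all terms are nonnegative). This gives
\[
\max_{t\in[M_k^+,M_k^++\omega_k^+]}\overline L(x,t,\nabla b_k^+(x))\leq\sum_{j=1}^{m_s}\max_{t}\overline L(x,t,\xi_j^s)\leq h(M_k^+)+q(x).
\]
Measurability of the left-hand side is not an issue: \(\nabla b_k^+\) takes finitely many values on measurable sets, and \(L\) is Carath\'eodory and continuous in \(t\). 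Integrating over the tail,
\[
\int_{[u>M_k^+]}\max_{t\in[M_k^+,M_k^++\omega_k^+]}\overline L(x,t,\nabla b_k^+(x))\,dx\leq|[u>M_k^+]|\,h(M_k^+)+\int_{[u>M_k^+]}q\,dx .
\]
The first term tends to \(0\) by the choice of levels. The second tends to \(0\) because \(q\in L^1(\Omega)\) and \(|[u>M_k^+]|\to0\), the latter since \(u\in L^p(\Omega)\).

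The negative tail is handled symmetrically, with the interval \([-M_k^--\omega_k^-,-M_k^-]\subset[s-\delta_s,s+\delta_s]\) for \(s=-M_k^-\). Both hypotheses of Lemma~\ref{lm-translated-oscillators} then hold, and it yields that \(\mathcal L\) is \(L^\infty(\Omega)\)-regular at \(u\).
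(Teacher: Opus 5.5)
Your proposal is correct and follows the paper's proof essentially step by step: levels from Lemma~\ref{lm-mkpm}, barriers from Lemma~\ref{lm-oscillating-functions} with \(\varepsilon=\delta_s\) at \(s=M_k^+\) and \(s=-M_k^-\), and the conclusion via Lemma~\ref{lm-translated-oscillators} read with \(M_k^\pm\to+\infty\). One small slip: the middle term \(\sum_{j}\max_{t}\overline L(x,t,\xi_j^s)\) in your displayed chain is not bounded by the hypothesis, which controls \(\max_t\sum_j\) and not \(\sum_j\max_t\); write instead \(\max_t\overline L(x,t,\nabla b_k^+(x))\leq\max_t\sum_j\overline L(x,t,\xi_j^s)\leq h(M_k^+)+q(x)\), which is exactly what your verbal justification says.
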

\begin{proof}
Let \(u\in \mathcal{A}\) such that \(h\circ u\in L^{1}(\Omega)\). By Lemma~\ref{lm-mkpm}, there exist two sequences \((M_{k}^+)_{k\geq 1}\) and \((M_{k}^-)_{k\geq 1}\) converging to \(+\infty\) and such that 
\begin{equation}\label{eq1041}
\lim_{k\to +\infty}|[u>M_k^+]|\,|h(M_k^+)|=0,\quad 
\lim_{k\to +\infty}|[u<-M_k^-]|\,|h(-M_k^-)|=0.
\end{equation}
By assumption, there exist an integer \(m_k^+\geq1\), vectors 
\(
\xi_{1,k}^+,\dots,\xi_{m_k^+,k}^+\in\R^N
\) and a number \(\delta_k^+>0\)
such that
\[
0\in\operatorname{co}
\{\xi_{1,k}^+,\dots,\xi_{m_k^+,k}^+\}
\]
and 
for a.e. \(x\in\Omega\) and every
\(t\in[M_k^+,M_k^++\delta_k^+]\),
one has
\[
\sum_{j=1}^{m_k^+}
\overline L(x,t,\xi_{j,k}^+)
\leq h(M_k^+)+q(x).
\]
By Lemma~\ref{lm-oscillating-functions}, there exists
\(
b_k^+\in W^{1,\infty}(\R^N)
\)
such that
\(
0\leq b_k^+\leq\delta_k^+
\)
and
\[
\nabla b_k^+(x)\in
\{\xi_{1,k}^+,\dots,\xi_{m_k^+,k}^+\}
\qquad\text{for a.e. }x\in\R^N.
\]
Set
\(
\omega_{k}^+:=\operatorname*{ess\,osc}_{\Omega}b_k^+\).
Since \(\omega_k^+\leq\delta_k^+\), for a.e. \(x\in\Omega\),
\[
\max_{t\in[M_k^+,M_k^++\omega_k^+]}
\overline L(x,t,\nabla b_k^+(x))
\leq h(M_k^+)+q(x).
\]
We thus obtain
\[
\int_{[u>M_k^+]}
\max_{t\in[M_k^+,M_k^++\omega_k^+]}
\overline L(x,t,\nabla b_k^+(x))\,dx\leq
|[u>M_k^+]|h(M_k^+)
+\int_{[u>M_k^+]}|q(x)|\,dx
.
\]
Using \eqref{eq1041} and the fact that  \(q\in L^1(\Omega)\), we deduce that the right-hand side converges to zero. Hence
\[
\lim_{k\to +\infty}\int_{[u>M_k^+]}
\max_{t\in[M_k^+,M_k^++\omega_k^+]}
\overline L(x,t,\nabla b_k^+(x))\,dx
=0.
\]
The lower barriers are constructed analogously. More precisely, applying the same argument to  \(M_k^-\), we get a finite family 
\(
\xi_{1,k}^-,\dots,\xi_{m_k^-,k}^-\in\R^N
\)
whose convex hull contains \(0\), and
\(
b_k^-\in W^{1,\infty}(\R^N)
\)
that satisfies
\[
\lim_{k\to +\infty}\int_{[u<-M_k^-]}
\max_{t\in[-M_k^--\omega_k^-,-M_k^-]}
\overline L(x,t,\nabla b_k^-(x))\,dx
=0,
\]
where \(\omega_k^-:=\operatorname*{ess\,osc}_{\Omega}b_k^-\).
The conclusion follows from Lemma~\ref{lm-translated-oscillators}.

\end{proof}

Theorem~\ref{prop-della-francesca} takes a simpler form when it is possible to choose the same family of vectors \(\xi_{1}^s, \dots, \xi_{m_s}^s\) for every \(s\in \R\): 
\begin{corollary}\label{prop-H(u,0)}
Assume that there exist
\(
q_0\in L^1(\Omega)
\), a Borel map \(h_0:\R\to [0,+\infty)\),
and vectors
\(
\xi_1,\dots,\xi_m\in\R^N
\)
such that
\[
0\in\operatorname{co}\{\xi_1,\dots,\xi_m\},
\]
and, for every \(j=1,\dots,m\), and every \(s\in\R\), for a.e. \(x\in \Omega\), 
\begin{equation}\label{eq1241}
|L(x,s,\xi_j)|\leq h_0(s)+q_0(x). 
\end{equation}
We further assume that for every \(j=1,\dots,m\), and every \(s\in\R\),
\begin{equation}\label{eq1245}
\lim_{t\to s} \|L(\cdot,t,\xi_j)-L(\cdot,s,\xi_j)\|_{L^{\infty}(\Omega)}=0.
\end{equation}
Then \(\mathcal L\) is \(L^\infty(\Omega)\)-regular at every \(u\in \mathcal{A}\) such that \(h_0\circ u\in L^{1}(\Omega)\).
\end{corollary}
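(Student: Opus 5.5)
Our plan is to deduce the corollary from Theorem~\ref{prop-della-francesca}, taking \(m_s:=m\) and \(\xi_j^s:=\xi_j\) for every \(s\in\R\). It then remains to produce \(\delta_s>0\), a function \(q\in L^1(\Omega)\) and a Borel map \(h\geq0\) satisfying \eqref{eq-local-balanced-control} and \(h\circ u\in L^1(\Omega)\).

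\emph{Step 1 (choice of \(\delta_s\)).} Fix \(s\in\R\). By \eqref{eq1245}, applied to each of the finitely many indices \(j\), there exists \(\delta_s>0\) such that
\[
\|L(\cdot,t,\xi_j)-L(\cdot,s,\xi_j)\|_{L^\infty(\Omega)}\leq 1
\]
for every \(j\) and every \(t\) with \(|t-s|\leq\delta_s\). We use the Carath\'eodory property to pass from the countably many statements "for a.e. \(x\)" to "for a.e. \(x\), for every \(t\in[s-\delta_s,s+\delta_s]\)". Concretely, we work with rational \(t\), which requires only countably many null sets, and then use the continuity of \(t\mapsto L(x,t,\xi_j)\).

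\emph{Step 2 (the bound).} Combining Step 1 with \eqref{eq1241} at the point \(s\), we obtain for a.e. \(x\in\Omega\) and every \(t\in[s-\delta_s,s+\delta_s]\)
\[
\sum_{j=1}^m\overline L(x,t,\xi_j)\leq m\bigl(h_0(s)+1\bigr)+m|q_0(x)|+\sum_{j=1}^m|\xi_j|^p.
\]
We therefore set \(h(s):=m(h_0(s)+1)+\sum_j|\xi_j|^p\), which is Borel and nonnegative, and \(q:=m|q_0|\in L^1(\Omega)\). Note that \(h\) is built only from \(h_0\) and constants. This avoids any issue of measurability of \(s\mapsto\delta_s\), since \(\delta_s\) does not enter the definition of \(h\).

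\emph{Step 3 (conclusion).} Since \(\Omega\) is bounded, \(h_0\circ u\in L^1(\Omega)\) implies \(h\circ u\in L^1(\Omega)\). Theorem~\ref{prop-della-francesca} then yields the \(L^\infty(\Omega)\)-regularity of \(\mathcal L\) at \(u\).

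The only delicate point is the null-set bookkeeping in Step 1. If it proves troublesome, a simpler route is available: \eqref{eq1241} at \(t\) and the \(L^\infty\) bound at \(t\) are each required only for each fixed \(t\), a.e. in \(x\). The proof of Theorem~\ref{prop-della-francesca} uses the maximum over \(t\) in an interval, and that maximum is controlled through continuity in \(t\) together with density of the rationals.
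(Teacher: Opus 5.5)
Your proposal is correct and is essentially the paper's proof: apply Theorem~\ref{prop-della-francesca} with the constant family $\xi_j^s:=\xi_j$, take $\delta_s$ from \eqref{eq1245} so that $|L(x,t,\xi_j)|\leq h_0(s)+q_0(x)+1$ on $[s-\delta_s,s+\delta_s]$, and pass from ``for each $t$, a.e.\ $x$'' to ``a.e.\ $x$, every $t$'' by continuity in $t$ (you use rational $t$, the paper invokes Fubini together with continuity). The other differences are cosmetic: you put the constants into $h$ rather than $q$, which is harmless since $\Omega$ is bounded; your $\sum_j|\xi_j|^p$ is the correct term where the paper writes $\sum_j|\xi_j|$; and your closing ``simpler route'' paragraph is not needed.
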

\begin{proof}
By~\eqref{eq1245} and~\eqref{eq1241},
for every \(s\in \R\), there exists \(\delta_s>0\) such that for every \(t\in [s-\delta_s,s+\delta_s]\), for every \(j\in \{1, \dots,m\}\),  for a.e. \(x\in \Omega\),
\[
|L(x,t,\xi_j)| \leq |L(x,s,\xi_j)|+1\leq h_0(s)+q_0(x)+1.
\]
Hence, for every \(t\in [s-\delta_s, s+\delta_s]\), for a.e. \(x\in \Omega\),
\[
\sum_{j=1}^{m}
\overline{L}(x,t,\xi_j)\leq m(h_0(s)+q_0(x)+1)+\sum_{j=1}^{m}|\xi_j|.
\]
By the Fubini theorem and using also the fact that \(L\) is continuous with respect to the state variable \(t\), the above estimate also holds for a.e. \(x\in \Omega\), for every \(t\in [s-\delta_s, s+\delta_s]\).
The conclusion thus follows from Theorem~\ref{prop-della-francesca} taking \(m_s=m\), \(\xi_{j}^{s}=\xi_j\), \(\delta_s\), \(h=m h_0\) and \(q(x)= m (q_0(x)+1)+\sum_{j=1}^{m}|\xi_j|\).
\end{proof}

We recover Proposition~\ref{cor-lu0} by applying Corollary~\ref{prop-H(u,0)} with
\[
m=1,
\qquad
\xi_1=0,
\qquad
q_0:=0,
\]
and
\[
h_0(s):= |H(s,0)|.
\]

Even in the setting of autonomous Lagrangians \(L(x,t,\xi)=H(t,\xi)\), it may happen that \(H(u,0)\not\in L^{1}(\Omega)\) and still, that \(H(u,\xi_i)\in L^{1}(\Omega)\) for every \(\xi_i\) in a family which contains \(0\) in its convex hull. In such a situation, Proposition~\ref{cor-lu0} is not applicable and we need to rely on Corollary~\ref{prop-H(u,0)} to discard the Lavrentiev phenomenon, as  the forthcoming example shows. 

\begin{example}\label{ex-contrex-ct-barrier}
Let
\(\Omega:=B_1(0)\subset\R^2\) and choose \(\varphi\in \mathcal{C}_c^\infty(\R^2)\) such that
\(\varphi=2\) 
on \(\partial\Omega\). Let
\[
H(t,\xi):=t^4g(\xi),
\]
where \(g:\R^2\to[0,+\infty)\) is continuous, compactly supported in
\(B_1(0)\), and satisfies
\(g(0)=1\).
Consider
\[
u(x):=\frac{2}{\sqrt{|x|}}.
\]
Since
\[
\nabla u(x)=-\frac{x}{|x|^{5/2}},
\qquad
|\nabla u(x)|=\frac{1}{|x|^{3/2}},
\]
we have \(u\in W^{1,1}(\Omega)\).
Moreover, \(u\) has trace \(2\) on \(\partial\Omega\) and therefore
\(u\in W^{1,1}_\varphi(\Omega)\).
Since
\(|\nabla u(x)|>1\)  for every \(x\in\Omega\),
and \(g\) is supported in \(B_1(0)\), we obtain
\[
H(u,\nabla u)=0
\qquad\text{a.e. in }\Omega.
\]
Hence \(u\in\mathcal A\).

On the other hand,
\[
H(u,0)=u^4g(0)=\frac{16}{|x|^2}\notin L^1(\Omega).
\]
Nevertheless, for every \(\xi\notin B_1(0)\),
\[
H(u,\xi)=0.
\]
Choosing finitely many vectors outside \(B_1(0)\) whose convex hull contains
the origin, Corollary~\ref{prop-H(u,0)} shows that \(\mathcal L\) is
\(L^\infty(\Omega)\)-regular at \(u\).

Finally, for every \(t\geq2\),
\[
[u>t]
=
B_{4/t^2}(0),
\]
and hence
\[
|[u>t]|
=
\frac{16\pi}{t^4}.
\]
Since
\(H(t,0)=t^4\), 
we have
\[
|[u>t]|\,|H(t,0)|
=
\frac{16\pi}{t^4}t^4
=
16\pi.
\]
Therefore
\[
\lim_{t\to +\infty}|[u>t]|\,|H(t,0)|
\not=0.
\]
Thus the standard truncation method with constant barriers, described in
Remark~\ref{rk-b=0}, does not yield the \(L^\infty(\Omega)\)-regularity in
this example.
\end{example}

Corollary~\ref{prop-H(u,0)} has been formulated not just for autonomous Lagrangians, but also for \(x\)-dependent ones, a situation arising in the next example.

\begin{example}\label{ex-genuinely-vanishing-oscillation}
Let
\[
N=2,\qquad p=1,\qquad \Omega:=B_{e^{-1}}(0)\subset\R^2,
\]
and consider the zero boundary datum. For \(x\in\Omega\setminus\{0\}\), 
and define
\[
u(x):=\log\log\frac1{|x|}.
\]
Let
\[
e_1:=(1,0),\qquad K:=\{-e_1,e_1\}.
\]
After defining \(\nabla u(0):=0\), set
\[
d(x,\xi):=\operatorname{dist}\bigl(\xi,K\cup\{\nabla u(x)\}\bigr).
\]
Consider
\[
\ell(t):=\exp\bigl(2e^t-2t\bigr)
\]
and the Carath\'eodory integrand
\[
L(x,t,\xi):=\ell(t)+\ell(t)^2d(x,\xi)^2.
\]
We first observe that \(u\in W^{1,1}_0(\Omega)\). Indeed,
\[
\nabla u(x)
=
-\frac{x}{|x|^2\log(1/|x|)}
\qquad (x\neq0).
\]
Thus
\[
|\nabla u(x)|=\frac1{|x|\log(1/|x|)}
\qquad (x\neq0).
\]
Using polar coordinates and the change of variables \(y=\log(1/r)\), we get
\[
\int_\Omega|\nabla u|\,dx
=
2\pi\int_0^{e^{-1}}\frac1{\log(1/r)}\,dr
=
2\pi\int_1^{+\infty}\frac{e^{-y}}y\,dy
<+\infty.
\]
Moreover,
\[
\int_\Omega|u|\,dx
=
2\pi\int_0^{e^{-1}}\log\log\frac1r\,r\,dr
=
2\pi\int_1^{+\infty}\log y\,e^{-2y}\,dy
<+\infty.
\]
The trace of \(u\) on \(\partial\Omega\) is zero. Hence
\(u\in W^{1,1}_0(\Omega)\).
By construction,
\(d(x,\nabla u(x))=0\) for every  \(x\in\Omega\).
Therefore
\[
L(x,u(x),\nabla u(x))=\ell(u(x))=
\frac1{|x|^2\left(\log\frac1{|x|}\right)^2}.
\]
Consequently,
\[
\int_\Omega L(x,u(x),\nabla u(x))\,dx
=
2\pi\int_0^{e^{-1}}\frac1{r\left(\log\frac1r\right)^2}\,dr
=
2\pi\int_1^{+\infty}\frac1{y^2}\,dy
<+\infty.
\]
Thus, \(\ell\circ u \in L^{1}(\Omega)\) and  \(u\in\mathcal A\).
Since \(d(x,\pm e_1)=0\) for every \(x\in \Omega\), we also have for every \(s\in \R\), \(x\in \Omega\), 
\[
L(x,s, \pm e_1)=\ell(s),
\]
and thus, by continuity of \(\ell\), 
\[
\lim_{t\to s} \|L(\cdot, t, \pm e_1)-L(\cdot, s, \pm e_1)\|_{L^{\infty}(\Omega)}=0.
\]
Hence we can apply Corollary~\ref{prop-H(u,0)} with
\(q_0:=0\), \(h_0(t):=\ell(t)\) and with the two vectors \(\xi_1=e_1\), \(\xi_2=-e_1\)
to conclude that \(\mathcal L\) is \(L^\infty(\Omega)\)-regular at \(u\).

Finally, the criterion given in Remark~\ref{rk-b=0} cannot be applied. Indeed, since
\[
|\nabla u(x)|=\frac1{|x|\log(1/|x|)}\longrightarrow+\infty
\qquad\text{as }x\to0,
\]
we have
\(d(x,0)=1\)
for \(x\) sufficiently close to \(0\). Hence, near the origin,
\[
L(x,t,0)=\ell(t)+\ell(t)^2=\exp(2e^{t}-2t) + \exp(4e^t-4t).
\]
But
\(|[u\geq t]|=\pi \exp(-2e^t)\)
and therefore
\[
\lim_{t\to +\infty}\int_{[u\geq t]} L(x,t,0)\,dx = +\infty.
\]
Consequently,
the conclusion cannot be obtained from Remark~\ref{rk-b=0}.
\end{example}

The main assumption in Corollary~\ref{prop-H(u,0)}, namely the existence of some function \(h_0\) dominating \(L\) and such that \(h_0\circ u\) is summable, may fail to be true. In particular, this happens for autonomous Lagrangians \(H\) such that
\[
H(u,\xi)\notin L^1(\Omega)
\qquad\text{for every fixed }\xi\in\R^N.
\]
We illustrate this possibility in Example~\ref{ex:^q} below. In such a situation, we can still rely on the full strength of Theorem~\ref{prop-della-francesca}, which entitles one to choose  vectors \(\xi_{1}^s, \dots, \xi_{m_s}^s\) that depend on the state variable \(s\). In the autonomous setting, this leads to the following simple criterion:

\begin{corollary}\label{corollary-autonomous-symetric}
Assume that
\[
L(x,t,\xi)=H(t,\xi)
\]
for some continuous function \(H:\R\times\R^N\to\R\).
Assume  further that there exists \(C\geq 1\) such that for every \(t\in \R\), for every \(\xi\in \R^N\), one has
\[
|H(t,-\xi)|\leq C(|H(t,\xi)|+1).
\]
Then, \(\mathcal{L}\) is \(L^{\infty}(\Omega)\)-regular at every \(u\in \mathcal{A}\).
\end{corollary}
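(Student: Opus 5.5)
We will derive this from Theorem~\ref{prop-della-francesca}, choosing for each state value \(s\) two opposite vectors \(\xi_1^s=\xi_s\), \(\xi_2^s=-\xi_s\), so that \(0\in\operatorname{co}\{\xi_s,-\xi_s\}\) trivially. The point is to pick \(\xi_s\) so that \(H(s,\pm\xi_s)\) is controlled by the energy of \(u\) near the level \(s\). Fix \(u\in\mathcal A\). The natural choice is \(\xi_s\) close to a value of \(\nabla u\) on the set where \(u\approx s\), but the hypotheses of Theorem~\ref{prop-della-francesca} require a pointwise-in-\(s\) bound \(h(s)\) with \(h\circ u\in L^1\). So instead we define the Borel function
\[
F(s):=\inf_{\xi\in\R^N}\bigl(|H(s,\xi)|+|\xi|^p\bigr)\le |H(s,\nabla u)|+|\nabla u|^p \ \text{ when } s=u(x),
\]
and note \(F\circ u\le \overline H(u,\nabla u)\in L^1(\Omega)\).

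For each \(s\), we pick \(\xi_s\) with \(\overline H(s,\xi_s)\le F(s)+1\). Then by the symmetry hypothesis, \(\overline H(s,-\xi_s)\le C(|H(s,\xi_s)|+1)+|\xi_s|^p\le C(F(s)+2)\). By continuity of \(H\) in \(t\) (for the two fixed vectors \(\pm\xi_s\)), there is \(\delta_s>0\) such that for \(t\in[s-\delta_s,s+\delta_s]\),
\[
\overline H(t,\xi_s)+\overline H(t,-\xi_s)\le 2C(F(s)+2)+1=:h(s).
\]
Since there is no \(x\)-dependence, the inequality \eqref{eq-local-balanced-control} holds for every \(x\) with \(q\equiv 0\). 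The remaining requirement is that \(h\circ u\in L^1(\Omega)\), which follows from \(F\circ u\in L^1\) and \(|\Omega|<\infty\).

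The main obstacle is measurability: Theorem~\ref{prop-della-francesca} requires \(h\) to be Borel, and \(F\) must be Borel for \(F\circ u\) to be measurable. This is fine because \(H\) is continuous, so \(F\) is an infimum over a countable dense set of \(\xi\)'s of continuous functions of \(s\). Hence \(F\) is upper semicontinuous and therefore Borel, and \(h\) is Borel too. No measurable selection of \(\xi_s\) is needed, since Theorem~\ref{prop-della-francesca} lets the vectors and \(\delta_s\) depend arbitrarily on \(s\). With these choices, Theorem~\ref{prop-della-francesca} gives the \(L^\infty(\Omega)\)-regularity at every \(u\in\mathcal A\).
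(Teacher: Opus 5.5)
Your proof is correct and follows the paper's argument. You apply Theorem~\ref{prop-della-francesca} with \(q=0\) and the two vectors \(\pm\xi_s\), where \(\xi_s\) is a near-minimizer of \(\overline H(s,\cdot)\); you use the symmetry hypothesis to bound \(\overline H(s,-\xi_s)\) and continuity in \(t\) to get \(\delta_s\). The only difference is that you define \(h\) through the upper semicontinuous function \(F(s)=\inf_\xi\overline H(s,\xi)\), whereas the paper takes \(h(s)=\overline H(s,\xi_1^s)+\overline H(s,-\xi_1^s)+1\), which depends on the selected vectors. Your choice makes \(h\) Borel without any measurable selection, as Theorem~\ref{prop-della-francesca} and Lemma~\ref{lm-mkpm} require.
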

\begin{proof}
Given \(s\in \R\), let \(\xi_{1}^{s}\) be such that
\[
\overline{H}(s,\xi_{1}^s)\leq \inf_{\xi \in \R^N} \overline{H}(s,\xi)+1.
\]
We then define \(\xi_{2}^s=-\xi_{1}^s\). Then, \(0\in \operatorname{co}(\xi_{1}^{s},\xi_{2}^s)\) and
\[
h(s):=\overline{H}(s,\xi_{1}^{s})+ \overline{H}(s,\xi_{2}^{s})+1=\overline{H}(s,\xi_{1}^{s})+\overline{H}(s,-\xi_{1}^{s})+1\leq (1+C)(\overline{H}(s,\xi_{1}^s)+1),
\]
where the last inequality follows from the assumption. 

Hence
\[
h(s)
\leq (1+C) \left(\inf_{\xi \in \R^N} \overline{H}(s,\xi)+2\right).
\]
It follows that for every \(u\in \mathcal{A}\),  for a.e. \(x\in \Omega\),
\[
0\leq h(u(x)) \leq (1+C) \left(\overline{H}(u(x),\nabla u(x))+2\right).
\]
Consequently, \(h\circ u \in L^{1}(\Omega)\).

 Moreover, for every
\(s\in\R\), the function
\[
t\longmapsto
\overline H(t,\xi_1^s)+\overline H(t,-\xi_1^s)
\]
is continuous. Hence  there exists \(\delta_s>0\)
such that
\[
\overline H(t,\xi_1^s)+\overline H(t,-\xi_1^s)
\leq \overline H(s,\xi_1^s)+\overline H(s,-\xi_1^s)+1=h(s),
\]
whenever \(|t-s|\leq\delta_s\). Therefore all the assumptions of
Theorem~\ref{prop-della-francesca} (taking \(q=0\)) are satisfied.   
The conclusion
follows.
\end{proof}

We proceed to provide an example of an autonomous Lagrangian \(H\) together with some admissible \(u\in \mathcal{A}\) such that \(H(u, \xi)\not\in L^{1}(\Omega)\) for any \(\xi \in \R^N\). In particular, Corollary~\ref{prop-H(u,0)} does not apply. Still, this Lagrangian falls within the realm of Corollary~\ref{corollary-autonomous-symetric}.

\begin{example}\label{ex:^q}
Consider 
\[
H:(t,\xi)\mapsto \left||\xi|-\frac{1}{2}t^3\right|^q
\]
and \(u(x)=\frac{1}{|x|^{\frac{1}{2}}}\).

This example corresponds to 
\cite[Example~6.1]{MaricondaTreu2020}. There, the \(L^{\infty}(\Omega)\) approximation was proved by constructing
a specific radial approximating sequence, after observing that the standard
constant truncations do not preserve the energy in the relevant range of
exponents. 

More precisely, let
\[
\Omega:=B_1(0)\subset\R^2,
\qquad
1\leq p<\frac{4}{3},
\qquad
q\geq\frac{4}{3},
\]
and choose \(\varphi\in \mathcal{C}_c^\infty(\R^2)\) such that
\(\varphi=1\)
on \(\partial B_1(0)\).
Since
\[
u(x)=|x|^{-1/2},
\qquad
\nabla u(x)=-\frac{x}{2|x|^{5/2}},
\]
we have
\[
|\nabla u(x)|=\frac{1}{2|x|^{3/2}}
=\frac12u(x)^3.
\]
Moreover,
\[
\int_{B_1(0)}|\nabla u|^p\,dx
=
\frac{2\pi}{2^p}
\int_0^1r^{1-\frac{3p}{2}}\,dr
<+\infty
\]
because \(p<4/3\). Therefore \(u\in W^{1,p}_\varphi(\Omega)\).
Furthermore,
\[
H(u(x),\nabla u(x))
=
\left|
|\nabla u(x)|-\frac12u(x)^3
\right|^q
=0
\]
for a.e. \(x\in\Omega\). Hence
$u\in\mathcal A$.
On the other hand, for every fixed \(\xi\in\R^2\),
\[
H(u(x),\xi)
=
\left|
|\xi|-\frac{1}{2|x|^{3/2}}
\right|^q.
\]
For \(|x|\) sufficiently small, depending on \(\xi\), one has
\[
\frac{1}{2|x|^{3/2}}\geq2|\xi|,
\]
and therefore
\[
H(u(x),\xi)
\geq
\frac{1}{4^q|x|^{3q/2}}.
\]
Since \(q\geq4/3\),
\[
\int_{B_1(0)}H(u,\xi)\,dx=+\infty.
\]
Thus
\(H(u,\xi)\notin L^1(\Omega)\)  for every fixed \(\xi\in\R^2\) and Corollary~\ref{prop-H(u,0)} does not apply.

However, by Corollary~\ref{corollary-autonomous-symetric}, 
\(\mathcal L\) is \(L^\infty(\Omega)\)-regular at \(u\).

\end{example}

\subsection{Barriers with large oscillations}

The results presented in the preceding section rely on families of barriers with small oscillation. This section is devoted to a different criterion based on a single bounded function, whose oscillation need not be small, at the price of a local comparison assumption in the state variable. The corresponding construction is much more elementary but still yields very efficient barriers in some specific situations.

\begin{proposition}\label{prop-single-oscillator}
Assume that there exist \(b\in W^{1,p}(\Omega)\cap L^\infty(\Omega)\), \(q\in L^1(\Omega)\) and a Borel function \(h:\R\to[0,+\infty)\) such that
for a.e. \(x\in\Omega\) and every \(s,t\in\R\) satisfying
\[
|s-t|\leq \operatorname*{ess\,osc}_{\Omega}b,
\]
one has
\[
\overline L(x,s,\nabla b(x))
\leq q(x)+h(t).
\]
Then \(\mathcal L\) is \(L^\infty(\Omega)\)-regular at every \(u\in \mathcal{A}\) such that \(
h\circ u\in L^1(\Omega)\).
\end{proposition}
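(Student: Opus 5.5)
We plan to deduce the statement from Lemma~\ref{lm-translated-oscillators}, taking the same bounded function as oscillator at every step, namely \(b_k^+=b_k^-=b\), so that \(\omega_k^\pm=\omega:=\operatorname*{ess\,osc}_{\Omega}b\) is a fixed number. (Lemma~\ref{lm-translated-oscillators} is evidently meant with \(M_k^\pm\to+\infty\), as its proof shows.) Fix \(u\in\mathcal A\) with \(h\circ u\in L^1(\Omega)\).

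The first step is to choose the levels. As in the proof of Theorem~\ref{prop-della-francesca}, we apply Lemma~\ref{lm-mkpm} to the Borel function \(h\). This yields sequences \(M_k^\pm\to+\infty\) with
\[
\lim_{k\to+\infty}|[u>M_k^+]|\,h(M_k^+)=0,\qquad \lim_{k\to+\infty}|[u<-M_k^-]|\,h(-M_k^-)=0.
\]

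The second step is the pointwise bound. For every \(s\in[M_k^+,M_k^++\omega]\) we have \(|s-M_k^+|\le\omega\), so the hypothesis applied with \(t=M_k^+\) gives, for a.e. \(x\),
\[
\max_{s\in[M_k^+,M_k^++\omega]}\overline L(x,s,\nabla b(x))\le q(x)+h(M_k^+).
\]
Two points need care here. First, the hypothesis is stated for a.e. \(x\) and every pair \(s,t\), so the null set does not depend on \(s\), and the maximum (attained, by continuity of \(L\) in \(s\)) is controlled on a single full-measure set. Second, this maximum is measurable, being a supremum over rational \(s\) by continuity. Integrating over \([u>M_k^+]\) then gives the bound
\[
|[u>M_k^+]|\,h(M_k^+)+\int_{[u>M_k^+]}|q|\,dx ,
\]
which tends to \(0\): the first term by the choice of levels, and the second by absolute continuity of the integral, since \(|[u>M_k^+]|\to0\) because \(u\in L^1(\Omega)\). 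The lower tail is handled symmetrically, using \(t=-M_k^-\) and \(s\in[-M_k^--\omega,-M_k^-]\).

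Lemma~\ref{lm-translated-oscillators} then yields the conclusion. I see no real obstacle in this argument. The only delicate point is the role of Lemma~\ref{lm-mkpm}, whose statement is not reproduced above: it must produce levels along which \(|[u>M]|\,h(M)\to0\) from \(h\circ u\in L^1(\Omega)\), and this is exactly how it is used in the proofs of Proposition~\ref{cor-lu0} and Theorem~\ref{prop-della-francesca}. Note that no continuity of \(h\) is needed, since \(h\) is evaluated only at the levels \(\pm M_k^\pm\) themselves.
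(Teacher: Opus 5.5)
Your proof is correct and is essentially the paper's proof: the paper also takes \(b_k^\pm=b\) and applies Lemma~\ref{lm-translated-oscillators}. The only difference is that it inlines the level choice \(h(M_k^+)\leq\inf_{t\geq k}h(t)+1\), which is the content of Lemma~\ref{lm-mkpm}, and bounds the integrand on the tail pointwise by \(q(x)+h(u(x))+1\) rather than citing that lemma (your reading \(M_k^\pm\to+\infty\) in Lemma~\ref{lm-translated-oscillators} is also the intended one).
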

\begin{proof}
For every \(k\geq1\), choose \(M_k^\pm\geq k\) such that
\[
h(M_k^+)\leq\inf_{t\geq k}h(t)+1,
\qquad
h(-M_k^-)\leq\inf_{t\leq-k}h(t)+1.
\]
Set
\[
\omega:=\operatorname*{ess\,osc}_{\Omega}b,
\qquad
b_k^+:=b_k^-:=b.
\]
For a.e. \(x\in\Omega\) and every \(s\in[M_k^+,M_k^++\omega]\), our assumption implies
\[
\overline L(x,s,\nabla b_k^+(x))
\leq q(x)+h(M_k^+).
\]
Let \(u\in \mathcal{A}\) such that \(h\circ u\in L^{1}(\Omega)\).
For  \(x\in [u>M_k^+]\), the choice of \(M_k^+\) implies that the right-hand side is not lower than
\(q(x)+h(u(x))+1\). Hence
\[
\int_{[u>M_k^+]}\max_{s\in [M_k^+,M_k^++\omega]}\overline L(x,s,\nabla b_k^+(x))\,dx
\leq
\int_{[u>M_k^+]}(q(x)+h(u(x))+1)\,dx
\longrightarrow0.
\]

Similarly, 
\[
\int_{[u<-M_k^-]}\max_{s\in [-M_k^--\omega,-M_{k}^{-}]}\overline L(x,s,\nabla b_k^-(x))\,dx
\leq
\int_{[u<-M_k^-]}(q(x)+h(u(x))+1)\,dx
\longrightarrow0.
\]
The conclusion follows from Lemma~\ref{lm-translated-oscillators}.
\end{proof}

The most simple situation where Proposition~\ref{prop-single-oscillator} applies is given in the following corollary:

\begin{corollary}\label{cor-common-zero-oscillator}
Assume that there exists
\(
b\in W^{1,p}(\Omega)\cap L^\infty(\Omega)
\)
such that
\(
L(x,t,\nabla b(x))=0
\)
for a.e. \(x\in\Omega\) and every \(t\in\R\). Then \(\mathcal L\) is \(L^\infty(\Omega)\)-regular at every \(u\in\mathcal A\).
\end{corollary}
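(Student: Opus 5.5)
The plan is to apply Proposition~\ref{prop-single-oscillator} with the given function \(b\), choosing \(q\) and \(h\) so that both are trivially summable. Since \(L(x,t,\nabla b(x))=0\) for a.e. \(x\in\Omega\) and every \(t\in\R\), we have
\[
\overline L(x,s,\nabla b(x))=|\nabla b(x)|^p
\]
for a.e. \(x\) and every \(s\in\R\). In particular, this quantity is independent of \(s\).

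We therefore set
\[
q:=|\nabla b|^p,\qquad h\equiv 0.
\]
Since \(b\in W^{1,p}(\Omega)\), we have \(q\in L^1(\Omega)\). The function \(h\) is Borel and nonnegative, and \(h\circ u\equiv0\in L^1(\Omega)\) for every \(u\in\mathcal A\). The required inequality \(\overline L(x,s,\nabla b(x))\leq q(x)+h(t)\) then holds with equality, for every \(s,t\), in particular those with \(|s-t|\leq\operatorname*{ess\,osc}_\Omega b\). Proposition~\ref{prop-single-oscillator} then gives \(L^\infty(\Omega)\)-regularity at every \(u\in\mathcal A\).

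The only point requiring care is the quantifier order: the inequality must hold for a.e. \(x\), simultaneously for all \(s\). The hypothesis as stated already provides a single null set outside of which the identity holds for every \(t\). If it were read with the null set depending on \(t\), I would first restrict to rational \(t\) and then use the continuity of \(L(x,\cdot,\xi)\), which holds because \(L\) is Carath\'eodory, to extend to all \(t\) outside a single null set. This is the step most likely to need a remark; otherwise the proof is immediate.
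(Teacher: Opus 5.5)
Your proposal is correct and matches the paper's proof exactly: apply Proposition~\ref{prop-single-oscillator} with \(q:=|\nabla b|^p\) and \(h:=0\). Your remark on the quantifier order is a harmless extra; the countable-rationals-plus-continuity argument you sketch is valid for a Carath\'eodory integrand.
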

\begin{proof}
Apply Proposition~\ref{prop-single-oscillator} with
\[
q(x):=|\nabla b(x)|^p,
\qquad
h:=0.
\]
\end{proof}

We present a first consequence of Corollary~\ref{cor-common-zero-oscillator} in the case when the Lagrangian has a product form: 

\begin{corollary}\label{propositionH1H2}
Assume that there exist a continuous function \(a:\R\to\R\) and a Carath\'eodory function \(g:\Omega\times\R^N\to\R\) such that
\[
L(x,t,\xi)=a(t)g(x,\xi)
\]
for a.e. \(x\in\Omega\) and every \((t,\xi)\in\R\times\R^N\). Assume moreover that at least one of the following conditions holds:
\begin{enumerate}
\item[{\rm (H1)}] \(g(\cdot,0)\in L^\infty(\Omega)\) and there exists \(\alpha>0\) such that
\begin{equation}\label{eq647}
|g(x,\xi)|\geq\alpha
\qquad\text{for a.e. }x\in\Omega\text{ and every }\xi\in\R^N;
\end{equation}
\item[{\rm (H2)}] there exists \(\xi_0\in\R^N\) such that
\[
g(x,\xi_0)=0
\qquad\text{for a.e. }x\in\Omega.
\]
\end{enumerate}
Then \(\mathcal L\) is \(L^\infty(\Omega)\)-regular at every \(u\in\mathcal A\).
\end{corollary}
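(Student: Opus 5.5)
The two hypotheses lead to different constant or affine barriers, so I will treat them separately. In each case the corollary should reduce to a result already proved: Proposition~\ref{lm-Giotto} under (H1), and Corollary~\ref{cor-common-zero-oscillator} under (H2).

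\emph{Case (H2).} I take the affine barrier \(b(x):=\langle \xi_0,x\rangle\).
\begin{itemize}
\item Since \(\Omega\) is bounded, \(b\in W^{1,\infty}(\Omega)\subset W^{1,p}(\Omega)\cap L^\infty(\Omega)\).
\item We have \(\nabla b\equiv\xi_0\), so for a.e.\ \(x\in\Omega\) and every \(t\in\R\), \(L(x,t,\nabla b(x))=a(t)g(x,\xi_0)=0\).
\item Corollary~\ref{cor-common-zero-oscillator} then gives \(L^\infty(\Omega)\)-regularity at every \(u\in\mathcal A\).
\end{itemize}
The only point to check is the ``every \(t\)'' quantifier. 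The null set where \(g(\cdot,\xi_0)\neq 0\) does not depend on \(t\), so the identity holds for all \(t\) off a single null set.

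\emph{Case (H1).} I would use constant truncations through Proposition~\ref{lm-Giotto}. Fix \(u\in\mathcal A\).
\begin{itemize}
\item The lower bound \(|g|\geq\alpha\) gives, for a.e.\ \(x\),
\[
|a(u(x))|\leq \frac{1}{\alpha}\,|a(u(x))g(x,\nabla u(x))|=\frac1\alpha\,|L(x,u(x),\nabla u(x))|.
\]
Since the right-hand side is summable, \(a\circ u\in L^1(\Omega)\).
\item Because \(g(\cdot,0)\in L^\infty(\Omega)\), it follows that \((a\circ u)\,g(\cdot,0)\in L^1(\Omega)\), with norm at most \(\|g(\cdot,0)\|_{L^\infty}\|a\circ u\|_{L^1}\).
\item Since \(\Omega\) is bounded, \(g(\cdot,0)\in L^\infty(\Omega)\subset L^1(\Omega)\).
\end{itemize}
All hypotheses of Proposition~\ref{lm-Giotto} are therefore satisfied, and the conclusion follows.

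There is no real obstacle in either case. The key observation is that the uniform lower bound \(|g|\geq\alpha\) transfers the summability of \(L(x,u,\nabla u)\) to \(a\circ u\) alone. That is exactly the piece of information constant barriers need and that \(u\in\mathcal A\) does not give in general.
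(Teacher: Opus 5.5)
Your proof is correct and follows the paper's argument exactly. Under (H1) you use \(|g|\geq\alpha\) to get \(a\circ u\in L^1(\Omega)\), hence \((a\circ u)g(\cdot,0)\in L^1(\Omega)\), and conclude with Proposition~\ref{lm-Giotto}; under (H2) you take the affine barrier \(b(x)=\langle\xi_0,x\rangle\) and apply Corollary~\ref{cor-common-zero-oscillator}. Your explicit check that \(g(\cdot,0)\in L^\infty(\Omega)\subset L^1(\Omega)\), which Proposition~\ref{lm-Giotto} requires, fills in a step the paper leaves implicit.
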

\begin{proof}
Let \(u\in\mathcal A\). Then
\[
a(u)g(\cdot,\nabla u)\in L^1(\Omega).
\]
Assume first that {\rm (H1)} holds. By \eqref{eq647},
\[
\alpha|a(u)|
\leq|a(u)g(\cdot,\nabla u)|
\quad\text{a.e. in }\Omega,
\]
and hence
\(a(u)\in L^1(\Omega)\).
Since \(g(\cdot,0)\in L^\infty(\Omega)\), it follows that
\[
L(\cdot,u,0)=a(u)g(\cdot,0)\in L^1(\Omega).
\]
The conclusion follows from Proposition~\ref{lm-Giotto}.
Assume now that {\rm (H2)} holds, and set
\[
b(x):=\langle\xi_0,x\rangle.
\]
Since \(\Omega\) is bounded, one has \(
b\in W^{1,p}(\Omega)\cap L^\infty(\Omega)\).
Moreover, for a.e. \(x\in\Omega\) and every \(t\in\R\),
\[
L(x,t,\nabla b(x))
=a(t)g(x,\xi_0)
=0.
\]
The conclusion follows from Corollary~\ref{cor-common-zero-oscillator}.
\end{proof}

In the very specific case of a product type autonomous Lagrangian, we can rely on the following criterion:

\begin{corollary}\label{prop-mantegna}
Consider an autonomous Lagrangian of the form \(H(t,\xi)=a(t)g(\xi)\), where 
 \(a:\R\to \R\) and \(g:\R^N\to \R \) are continuous. 
We  assume that
\begin{equation}\label{eq309}
\liminf_{|\xi|\to+\infty}|g(\xi)|>0.
\end{equation}
Then \(\mathcal{L}\) is \(L^{\infty}(\Omega)\)-regular at every \(u\in \mathcal{A}\).
\end{corollary}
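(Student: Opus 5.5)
The plan is to reduce the statement to Corollary~\ref{propositionH1H2}, seen as an autonomous product-type Lagrangian with \(g(x,\xi):=g(\xi)\) independent of \(x\). Since \(g\) is continuous on \(\R^N\), it is trivially a Carath\'eodory function on \(\Omega\times\R^N\), and \(a\) is continuous by assumption. The argument splits into two cases, according to whether \(g\) vanishes somewhere; in each case I verify one of the two alternatives of Corollary~\ref{propositionH1H2}.

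\emph{First case: \(g\) has a zero.} Suppose there is \(\xi_0\in\R^N\) with \(g(\xi_0)=0\). Then \(g(x,\xi_0)=0\) for every \(x\in\Omega\), so condition (H2) of Corollary~\ref{propositionH1H2} holds. That corollary (through Corollary~\ref{cor-common-zero-oscillator} with the affine barrier \(b(x)=\langle\xi_0,x\rangle\)) yields \(L^\infty(\Omega)\)-regularity at every \(u\in\mathcal A\). Note that assumption \eqref{eq309} is not needed here.

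\emph{Second case: \(g\) never vanishes.} Here I would prove a uniform positive lower bound on \(|g|\), which is the only step requiring any argument. By \eqref{eq309}, there exist \(R>0\) and \(\beta>0\) such that
\[
|g(\xi)|\geq\beta \quad \text{whenever } |\xi|\geq R.
\]
On the compact ball \(\overline{B_R(0)}\), the function \(|g|\) is continuous and strictly positive, so it attains a minimum \(\gamma>0\). Setting \(\alpha:=\min\{\beta,\gamma\}>0\), we get \(|g(\xi)|\geq\alpha\) for every \(\xi\in\R^N\), which is exactly \eqref{eq647}.

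Moreover, \(g(\cdot,0)\equiv g(0)\) is a constant function and hence belongs to \(L^\infty(\Omega)\). Therefore condition (H1) of Corollary~\ref{propositionH1H2} holds. That corollary (via Proposition~\ref{lm-Giotto}, because \(a\circ u\in L^1(\Omega)\) follows from \(\alpha|a(u)|\leq|H(u,\nabla u)|\)) gives the conclusion.

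There is no genuine obstacle. The only point to check carefully is that \eqref{eq309} together with continuity turns pointwise non-vanishing of \(g\) into a uniform bound; this uses compactness of the ball, and the liminf hypothesis handles the region outside it.
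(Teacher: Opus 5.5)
Your proof is correct and follows essentially the same route as the paper. The paper splits on whether \(\inf_{\xi}|g(\xi)|\) is positive or zero, and in the latter case uses \eqref{eq309} and continuity to produce a zero of \(g\); this is the contrapositive of your compactness step. Both cases then invoke (H1) or (H2) of Corollary~\ref{propositionH1H2} exactly as you do.
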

\begin{proof}
Set
\[
\alpha:=\inf_{\xi\in\R^N}|g(\xi)|.
\]
If \(\alpha>0\), then
the conclusion follows from Corollary~\ref{propositionH1H2} under the assumption \((\textrm{H}1)\).
\noindent
If instead \(\alpha=0\), then by \eqref{eq309} and the continuity of \(g\), there exists \(\xi_0\in\R^N\) such that
\(g(\xi_0)=0\) and the conclusion thus follows from Corollary~\ref{propositionH1H2} under the assumption \((\textrm{H}2)\).
\end{proof}

\begin{remark}
The condition considered in
\cite[Proposition~3.7]{MaricondaTreu2020}, namely
\[
g(\xi)\geq c|\xi|
\qquad\text{for every }\xi\in\R^N,
\]
with \(c>0\), implies \eqref{eq309}. Thus, under the corresponding
product-type assumptions, Corollary~\ref{prop-mantegna} recovers that
result.
\end{remark}

\begin{remark}
Corollary~\ref{cor-common-zero-oscillator} also applies in the following   cases:
\begin{enumerate}
\item
\[
L(x,t,\xi)=R(a(x,t)g(\xi)),
\qquad
R(0)=0,
\qquad
g(\xi_0)=0;
\]
\item
\[
L(x,t,\xi)=K(a(x,t),\xi),
\qquad
K(s,\xi_0)=0
\quad \text{for every }s\in\R;
\]
\item
\[
L(x,t,\xi)=\sum_{i=1}^m a_i(x,t)g_i(\xi),
\qquad
g_i(\xi_0)=0
\quad\text{for every }i=1, \dots, m;
\]
\item \[
L(x,t,\xi)
=\lambda(t)|\xi-\nabla b(x)|^p, \quad \lambda\in \mathcal{C}^{0}(\R;[0,+\infty)), \quad b\in W^{1,p}(\Omega)\cap L^\infty(\Omega).
\]
\end{enumerate}
\end{remark}

Proposition~\ref{prop-single-oscillator} may be useful in some situations that are not covered by Corollary~\ref{cor-common-zero-oscillator}.
We proceed to present a simple criterion based on affine barriers, and that involve a function \(h:\R\to \R\) that satisfies a doubling condition.

\begin{corollary}\label{cor-single-direction}
Assume that there exist \(\xi_* \in \R^N\),  \(q_*\in L^{1}(\Omega)\) and a Borel function \(h_*:\R\to \R\) such that for a.e. \(x\in \Omega\) and for every \(t\in \R\),
\[
\overline{L}(x,t,\xi_*)\leq q_*(x)+h_*(t).
\]
Assume further that \(h_*\) satisfies a doubling condition: there exists \(C>0\) such that for every \(s, t\in \R\),
\[
|s|\leq 2|t|\Longrightarrow |h_*(s)|\leq C(|h_*(t)|+1). 
\]
Then, \(\mathcal L\) is \(L^\infty(\Omega)\)-regular at every \(u\in \mathcal{A}\) such that \(h_*\circ u\in L^{1}(\Omega)\).
\end{corollary}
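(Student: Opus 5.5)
The plan is to apply Proposition~\ref{prop-single-oscillator} with the affine barrier \(b(x):=\langle \xi_*,x\rangle\). Since \(\Omega\) is bounded, \(b\in W^{1,\infty}(\Omega)\subset W^{1,p}(\Omega)\cap L^\infty(\Omega)\). Moreover \(\nabla b\equiv\xi_*\), and \(\omega:=\operatorname*{ess\,osc}_\Omega b\leq |\xi_*|\operatorname{diam}\Omega<+\infty\).

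It remains to produce \(q\in L^1(\Omega)\) and a Borel \(h\geq0\) with \(h\circ u\in L^1(\Omega)\) such that \(\overline L(x,s,\xi_*)\leq q(x)+h(t)\) whenever \(|s-t|\leq\omega\). By hypothesis, \(\overline L(x,s,\xi_*)\leq q_*(x)+h_*(s)\leq q_*(x)+|h_*(s)|\). We use the doubling condition to compare \(h_*(s)\) with \(h_*(t)\).

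\begin{itemize}
\item If \(|t|\geq\omega\), then \(|s|\leq|t|+\omega\leq 2|t|\), so \(|h_*(s)|\leq C(|h_*(t)|+1)\).
\item If \(|t|<\omega\), then \(|s|\leq 2\omega\). Applying doubling with \(t_0:=\omega\) (any fixed point with \(2|t_0|\geq 2\omega\)) gives \(|h_*(s)|\leq C(|h_*(\omega)|+1)=:K\), a constant.
\end{itemize}

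We therefore set \(h(t):=C(|h_*(t)|+1)+K\), which is Borel and nonnegative, and \(q:=|q_*|\in L^1(\Omega)\). Then \(\overline L(x,s,\nabla b(x))\leq q(x)+h(t)\) for a.e. \(x\) and all \(|s-t|\leq\omega\).

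If \(u\in\mathcal A\) satisfies \(h_*\circ u\in L^1(\Omega)\), then \(h\circ u\leq C|h_*\circ u|+C+K\) is summable because \(\Omega\) is bounded. Proposition~\ref{prop-single-oscillator} then gives the conclusion.

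One subtlety deserves care: the "a.e. \(x\), every \(t\)" quantifier in the hypothesis must be compatible with the quantifier required in Proposition~\ref{prop-single-oscillator}. This is fine, since the inequality for \(\xi_*\) holds for a.e. \(x\) simultaneously for all \(t\).

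There is also a degenerate case, \(\xi_*=0\). Then \(\omega=0\), and the doubling comparison becomes trivial since \(s=t\).

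I expect no real obstacle. The only step needing attention is the bounded region \(|t|<\omega\), where doubling cannot directly compare \(s\) with \(t\); it is handled by the constant \(K\) above.
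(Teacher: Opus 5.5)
Your proof is correct and follows the same route as the paper. Both use the affine barrier \(b(x)=\langle\xi_*,x\rangle\), the bound \(|s|\leq\max\{2|t|,2\omega\}\) combined with the doubling condition (your two cases reproduce the paper's estimate \(|h_*(s)|\leq C(|h_*(t)|+1)+C(|h_*(\omega)|+1)\)), and then Proposition~\ref{prop-single-oscillator}; your choice \(h=C(|h_*|+1)+K\), \(q=|q_*|\) is slightly more careful than the paper's \(h=C_0(h_*+1)\), since it ensures the nonnegativity of \(h\) required by that proposition even when \(h_*\) takes negative values.
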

\begin{proof}
Set
\[
b(x):=\langle\xi_*,x\rangle,
\qquad
\omega:=\operatorname*{ess\,osc}_{\Omega}b.
\]
For every \(s,t\in\R\) such that \(|s-t|\leq\omega\), one has 
\[
|s|\leq |t|+\omega \leq \max (2|t|, 2|\omega|).
\]
Since \(h_*\) satisfies the doubling condition, 
\[
|h_*(s)|\leq C(|h_*(t)|+1) + C(|h_*(\omega)|+1)\leq C_0(|h_*(t)|+1),
\]
with \(C_0=2C+ C|h_*(\omega)|\). 
Hence
\[
\overline L(x,s,\xi_*)
\leq
q_*(x)
+
C_0(h_*(t)+1).
\]
Therefore all the assumptions of Proposition~\ref{prop-single-oscillator}
are satisfied with \(q=q_*\)
and
\[
h(t):=C_0(h_*(t)+1)
.
\]
The conclusion follows.
\end{proof}

\section{Autonomous Lagrangians}
\label{section:autonomous}

In this section, we consider the specific case of autonomous integrands, namely those not depending on the spatial variable \(x\). Our main result, Theorem~\ref{cor-barycentric-subgradients} below, gives a sufficient condition for the non-occurence of the Lavrentiev gaps in \(L^{\infty}(\Omega)\),  in terms of the convex subdifferentials of the family of  functions \((H(t,\cdot))_{t\in \R}\). The heart of our strategy is contained in the following lemma stating the summability of a certain function involved in the subdifferential inequalities.

\begin{lemma}
\label{prop-affine-minorant}
Let \(H:\R\times\R^N\to[0,+\infty)\) be continuous. Assume that there exist a Borel locally bounded function
\[
K:\R\to[0,+\infty)
\]
such that, for every \(t\in\R\), there exists \(\zeta_t\in\R^N\) satisfying
\[
H(t,\xi)\geq K(t)+\langle\zeta_t,\xi\rangle
\qquad\text{for every }\xi\in\R^N.
\]
Then, for every \(u\in \mathcal{A}\), one has
\[
K\circ u\in L^1(\Omega).
\]
\end{lemma}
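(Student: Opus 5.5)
The plan is to convert the affine minorant into a pointwise inequality along \(u\) and to show that the troublesome term \(\langle\zeta_{u(x)},\nabla u(x)\rangle\) is a divergence, so that it integrates to zero on suitable superlevel and sublevel sets of \(u\). Fix \(u\in\mathcal A\). Taking \(t=u(x)\) and \(\xi=\nabla u(x)\) in the hypothesis gives, for a.e. \(x\in\Omega\),
\[
K(u(x))\leq H(u(x),\nabla u(x))-\langle \zeta_{u(x)},\nabla u(x)\rangle .
\]
On \([|u|\leq M]\) with \(M>\|\varphi\|_{L^\infty(\Omega)}\), local boundedness of \(K\) already gives \(K\circ u\in L^\infty\), so only the tails \([u>M]\) and \([u<-M]\) need work. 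This is where the boundedness of \(\varphi\) enters, as announced in the introduction.

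\textbf{Step 1 (selection of \(\zeta\)).} I would first choose \(t\mapsto\zeta_t\) Borel and locally bounded, so that the primitive \(\Phi(t):=\int_M^t\zeta_s\,ds\in\R^N\) is locally Lipschitz. For each \(t\), the admissible set \(Z_t=\{\zeta:\ H(t,\xi)\geq K(t)+\langle\zeta,\xi\rangle\ \forall\xi\}\) is closed and convex. I would take the minimal-norm element and check Borel measurability through the continuity of \(H\), testing only rational \(\xi\). I expect this step to be the main obstacle, specifically local boundedness of the selection: nothing prevents \(|\zeta_t|\) from blowing up, since \(H(t,\cdot)\) may grow fast.

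If local boundedness fails, my first fallback is to decompose the range of \(u\) into countably many intervals. On each one I would modify \(\zeta\) to be bounded, at the cost of replacing \(K\) by \(K-1\) or \(K/2\); continuity of \(H\) should be exploited here, and this does not affect summability because \(|\Omega|<\infty\). Alternatively, one can use that only the values \(\zeta_{u(x)}\) matter, so bounded truncations \(\zeta\chi_{[|\zeta|\leq R]}\) could be combined with a monotone limit in \(R\).

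\textbf{Step 2 (divergence identity on a slab).} For \(M<M'\), set \(v:=\min(\max(u,M),M')\). Since \(M>\|\varphi\|_{L^\infty}\), we have \(v-M\in W^{1,p}_0(\Omega)\), hence each component of \(\Phi(v)=\Phi(v)-\Phi(M)\) lies in \(W^{1,1}_0(\Omega)\), using the Lipschitz property of \(\Phi\) on \([M,M']\). The chain rule gives
\[
\operatorname{div}\Phi(v)=\langle\zeta_{v},\nabla v\rangle=\chi_{[M<u<M']}\langle\zeta_u,\nabla u\rangle \quad\text{a.e.}
\]
Since the integral of the divergence of a \(W^{1,1}_0\) vector field vanishes,
\[
\int_{[M<u<M']}\langle\zeta_u,\nabla u\rangle\,dx=0 .
\]

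\textbf{Step 3 (summation and limits).} Integrating the pointwise inequality over \([M<u<M']\) and using \(H\geq0\) yields
\[
\int_{[M<u<M']}K(u)\,dx\leq\int_\Omega H(u,\nabla u)\,dx<\infty .
\]
Letting \(M'\to+\infty\) and applying monotone convergence (\(K\geq0\)) gives \(K\circ u\in L^1([u>M])\). The set \([u<-M]\) is handled symmetrically, with \(\max(\min(u,-M),-M')\) and \(\Phi\) based at \(-M\). Combining this with the bounded middle region gives \(K\circ u\in L^1(\Omega)\).
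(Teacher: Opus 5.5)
Your architecture is the paper's: a Borel selection \(t\mapsto\zeta_t\) and its locally Lipschitz primitive. You then use that the truncation of \(u\) between \(M>\|\varphi\|_{L^\infty}\) and \(M'\) differs from \(M\) by a \(W^{1,p}_0\) function, derive the identity \(\int_{[M<u<M']}\langle\zeta_u,\nabla u\rangle\,dx=0\), and conclude by monotone convergence.

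As written, though, the proof is incomplete exactly where you flag it: you never establish local boundedness of \(\zeta\). Step~2 relies on it, since \(\Phi\) must be Lipschitz on \([M,M']\) for the chain rule and for \(\Phi(v)\in W^{1,1}_0\). Step~3 relies on it too, since you need \(\chi_{[M<u<M']}\langle\zeta_u,\nabla u\rangle\) integrable to integrate the pointwise inequality. Your reason for doubting it is mistaken: the growth of \(H(t,\cdot)\) at infinity plays no role, only its values on the unit sphere matter. If \(\zeta\neq0\) is admissible at \(t\), testing the hypothesis with \(\xi=\zeta/|\zeta|\) and using \(K\geq0\) gives
\[
|\zeta|\leq H\bigl(t,\zeta/|\zeta|\bigr)-K(t)\leq\max_{|\xi|=1}H(t,\xi),
\]
and the right-hand side is continuous in \(t\). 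The paper does exactly this with \(\xi=\pm e_i\), obtaining \(|\zeta_i(t)|\leq\max\{H(t,e_i),H(t,-e_i)\}\). Hence every selection, your minimal-norm one included, is automatically locally bounded, and each \(Z_t\) is compact, so no fallback is needed.

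Your first fallback is too vague to be checked. The second, replacing \(\zeta\) by \(\zeta\chi_{[|\zeta|\leq R]}\), can be made rigorous. You must integrate the pointwise inequality only over \([M<u<M']\cap[|\zeta_u|\leq R]\), where it still holds. The truncated term vanishes on the rest of the slab, so its integral over that set equals its integral over the whole slab. That integral is zero by Step~2 applied to the now Lipschitz primitive of \(\zeta\chi_{[|\zeta|\leq R]}\). Still, this detour is superfluous.

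For measurability of the minimal-norm selection you still need Borel measurability of \(t\mapsto Z_t\). The paper gets it from a countable representation over rational \(\xi\) plus a measurable selection theorem; your sketch points that way.

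Once local boundedness is in place, your Step~3 is slightly more direct than the paper's. The paper splits \(F=\langle\zeta(u),\nabla u\rangle\) into \(F_\pm\) and proves \(\int_{[u>T]}F\,dx=0\). You instead integrate on slabs, where \(F\) is already integrable, and let \(M'\to+\infty\) using only \(K\geq0\).
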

The proof of Lemma~\ref{prop-affine-minorant} essentially relies on the same 
 arguments as those used in the proofs of \cite[Proposition~5.1]{Bousquet-Annali} or \cite[Theorem~5.1]{MaricondaTreu2020}.
In the latter result, affine lower bounds in the gradient variable arise from
a non-oscillatory condition at infinity. In the former result, one requires that \(H\) be convex with respect to the gradient variable.  
\begin{proof}
For every \(t\in\R\), define
\[
G(t):=
\left\{
\zeta\in\R^N:
H(t,\xi)\geq K(t)+\langle\zeta,\xi\rangle
\text{ for every }\xi\in\R^N
\right\}.
\]
By assumption,  \(G(t)\) is nonempty.  Moreover, \(G(t)\) is closed, since it is an intersection of closed half-spaces. 
For every fixed \((t,\zeta)\in\R\times\R^N\), the map
\[
\xi\longmapsto H(t,\xi)-K(t)-\langle\zeta,\xi\rangle
\]
is continuous. Hence its nonnegativity on \(\mathbb Q^N\) is equivalent to its nonnegativity on \(\R^N\). This implies that
\[
G(t):=\bigcap_{\xi \in \mathbb{Q}^N}
\left\{
\zeta\in\R^N:
H(t,\xi)\geq K(t)+\langle\zeta,\xi\rangle
\right\}.
\]
We proceed to prove that \(G\) is Borel measurable; that is, for every compact set \(F\subset \R^N\), the set 
\[
G^{-1}(F)=\{t\in \R : G(t)\cap F\not=\emptyset\}
\]
is Borel measurable. Indeed, let \(F\subset \R^N\) be a compact set and let \((\zeta_j)_{j\geq 1}\) be a countable dense subset in \(F\). Then,
\[
G^{-1}(F)=\bigcap_{i\geq 1}\bigcup_{j\geq 1}\bigcap_{\xi\in \mathbb{Q}^N}\left\{t\in \R : H(t,\xi)\geq K(t)+\langle\zeta_j,\xi\rangle-\frac{1}{i}|\xi|\right\}.
\]
Since \(K\) is Borel measurable, this proves that \(G^{-1}(F)\) is a Borel set and thus \(G\) is Borel measurable. Together with the fact that \(G\) maps \(\R\) into the nonempty closed subsets of \(\R^N\), this implies that there exists a Borel measurable function \(\zeta:\R\to \R^N\) such that \(\zeta(t)\in G(t)\) for every \(t\in \R\), see e.g.~\cite[Theorem 6.5]{FonsecaLeoni}. Hence
\begin{equation}\label{eq2236b}
H(t,\xi)\geq K(t)+\langle\zeta(t),\xi\rangle
\qquad\text{for every }(t,\xi)\in\R\times\R^N.
\end{equation}

Testing \eqref{eq2236b} with \(\xi=e_i\) and \(\xi=-e_i\) respectively, gives for every \(t\in \R\),
\[
\zeta_i(t)\leq H(t,e_i)-K(t)\leq H(t,e_i)
\]
and
\[
-\zeta_i(t)\leq H(t,-e_i)-K(t)\leq H(t,-e_i).
\]
Therefore
\[
|\zeta_i(t)|
\leq
\max\{H(t,e_i),H(t,-e_i)\},
\qquad i=1,\dots,N.
\]
The right-hand side is locally bounded in \(t\), by the continuity of \(H\). 
We deduce that \(\zeta\) is locally bounded.

Let \(u\in \mathcal{A}\) and set
\[
F(x):=\langle\zeta(u(x)),\nabla u(x)\rangle.
\]
Evaluating the inequality \eqref{eq2236b} at \((t,\xi)=(u(x),\nabla u(x))\), we obtain
\begin{equation}\label{eq2071}
F\leq H(u,\nabla u)-K(u)\leq H(u,\nabla u)
\qquad\text{a.e. in }\Omega.
\end{equation}
Consequently,
\[
F_+\leq H(u,\nabla u)
\qquad\text{a.e. in }\Omega,
\]
and therefore
\[
F_+\in L^1(\Omega).
\]
Using the first inequality in \eqref{eq2071}, one gets
\begin{equation}\label{eq2093}
K(u)\leq H(u,\nabla u)-F
\qquad\text{a.e. in }\Omega.
\end{equation}
Fix
\[
T>\|\varphi\|_{L^\infty(\R^N)}.
\]
For \(k>T\), set
\[
v_k:=\min\{(u-T)_++T,k\}.
\]
Then
\[
v_k-T\in W^{1,p}_0(\Omega),
\qquad
\nabla v_k=\nabla u\,\chi_{\{T<u<k\}}
\quad\text{a.e. in }\Omega.
\]
Writing \(\zeta=(\zeta_1,\dots,\zeta_N)\), define
\[
Z_i(s):=\int_T^s\zeta_i(\tau)\,d\tau,
\qquad
Z:=(Z_1,\dots,Z_N).
\]
Since \(\zeta\) is locally bounded, \(Z\) is locally Lipschitz. Moreover, \(Z(T)=0\), \(v_k-T\in W^{1,p}_0(\Omega)\), and \(v_k\) is bounded. The Sobolev chain rule gives
\[
Z(v_k)\in W^{1,p}_0(\Omega;\R^N)
\]
and for every \(i\in \{1, \dots, N\}\),
\[
\partial_i (Z_i(v_k))(x)=\zeta_i(v_k(x))\partial_i v_k(x) \qquad \textrm{a.e. } x\in \Omega.
\]
Remember that \(\nabla v_k(x)=0\) for a.e. \(x\in \Omega\) such that \(v_k(x)\) is a non-differentiability point of \(Z_i\) and in that case, the right-hand side is defined as \(0\),  see~\cite{SerrinVarberg}.
 
Hence
\[
0
=
\int_\Omega\operatorname{div}Z(v_k)\,dx
=
\int_\Omega\langle\zeta(v_k),\nabla v_k\rangle\,dx.
\]
Since
\[
\langle\zeta(v_k),\nabla v_k\rangle
=
F\chi_{\{T<u<k\}}
\qquad\text{a.e. in }\Omega,
\]
we have
\[
\int_{\{T<u<k\}}F\,dx=0.
\]
Equivalently,
\[
\int_{\{T<u<k\}}F_+\,dx
=
\int_{\{T<u<k\}}F_-\,dx.
\]
Since \(F_+\in L^1(\Omega)\), the monotone convergence theorem yields
\(
F_-\chi_{\{u>T\}}\in L^1(\Omega)
\)
and
\[
\int_{\{u>T\}}F_-\,dx
=
\int_{\{u>T\}}F_+\,dx.
\]
Therefore
\(
F\chi_{\{u>T\}}\in L^1(\Omega)
\)
and
\[
\int_{\{u>T\}}F\,dx=0.
\]
Using the estimate \eqref{eq2093}, we obtain
\[
\int_{\{u>T\}}K(u)\,dx
\leq
\int_{\{u>T\}}
\bigl(H(u,\nabla u)-F\bigr)\,dx
=
\int_{\{u>T\}}
\bigl(H(u,\nabla u)\bigr)\,dx<+\infty.
\]
The negative tail is treated analogously. 
Finally, since \(K\) is locally bounded, it is bounded on \([-T,T]\). 
Combining the positive tail, the negative tail, and the bounded region, we conclude that
\[
K\circ u\in L^1(\Omega).
\]
\end{proof}

The above proposition applies in particular in the setting of subdifferential inequalities.

\begin{definition}
    Let \(h:\R^N\to \R\). Given \(\xi\in \R^N\), the convex subdifferential of \(h\) at \(\xi\), denoted by \(\partial h(\xi)\),   is the set of those \(\zeta\in \R^N\) such that
\[
\forall \xi'\in \R^N, \qquad h(\xi')\geq h(\xi)+\langle \zeta, \xi'-\xi\rangle.
\]
\end{definition}

In the remaining part of this section, given a function \((t,\xi)\in \R\times \R^N\mapsto H(t,\xi)\), we apply this definition to the functions \(\xi \mapsto H(t,\xi)\), for any fixed \(t\in \R\), and we denote by \(\partial H(t,\cdot)(\xi)\) the corresponding subdifferential at \(\xi\).

 Our first result on the non-occurence of the Lavrentiev gap   between \(W^{1,p}(\Omega)\) and \(W^{1,p}(\Omega)\cap L^{\infty}(\Omega)\) requires that  each function \(H(t,\cdot)\), with \(t\in \R\),  has a non-empty convex subdifferential at \(0\).

\begin{theorem}\label{th-real-valued-affine-minorant}
Assume that the continuous function \(H:\R\times\R^N\to\R\) satisfies the following two conditions:
\begin{itemize}
\item[(i)] for every \(t\in \R\), one has \(\partial H(t,\cdot)(0)\not=\emptyset\); that is,  there exists \(\zeta\in \R^N\) such that
\[
\forall \xi \in \R^N, \quad H(t,\xi)\geq H(t,0)+\langle \zeta, \xi \rangle.
\]
\item[(ii)] there exist \(c,d\in \R\) such that for every \(t\in \R\),
\[
H(t,0)\geq c|t|^{p^{\#}}+d.
\]
\end{itemize}
Then \(\mathcal{L}\)  is \(L^{\infty}(\Omega)\)-regular at every \(u\in \mathcal{A}\).
\end{theorem}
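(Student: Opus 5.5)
The plan is to reduce to Proposition~\ref{cor-lu0}: it suffices to show that \(H(u,0)\in L^1(\Omega)\) for every \(u\in\mathcal A\). Lemma~\ref{prop-affine-minorant} gives this kind of summability, but it requires \(H\geq0\) and \(K\geq0\). So the first step is to shift \(H\) by a function of \(t\) alone, chosen so that both conditions hold.

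\emph{Choice of the shift.} Set
\[
\widetilde H(t,\xi):=H(t,\xi)-c|t|^{p^\#}-d+|c||t|^{p^\#}+|d|,
\]
which has the form \(H(t,\xi)+\lambda(t)\) with \(\lambda\) continuous and \(|\lambda(t)|\leq 2|c||t|^{p^\#}+2|d|\). Put \(K(t):=\widetilde H(t,0)\). By (ii), \(K(t)\geq |c||t|^{p^\#}+|d|\geq0\), and \(K\) is continuous, hence Borel and locally bounded. By (i), for each \(t\) there is \(\zeta_t\) with
\[
\widetilde H(t,\xi)\geq K(t)+\langle\zeta_t,\xi\rangle \quad\text{for every }\xi .
\]

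\emph{Nonnegativity.} Lemma~\ref{prop-affine-minorant} needs \(\widetilde H\geq0\) everywhere, and this is the main obstacle: the subgradient inequality only bounds \(\widetilde H(t,\cdot)\) below by an affine function, which is negative somewhere when \(\zeta_t\neq0\). The way around it is to observe that nonnegativity of \(H\) in that proof is used in only two places. The first is \(F_+\leq \widetilde H(u,\nabla u)-K(u)\leq |\widetilde H(u,\nabla u)|\), which still gives \(F_+\in L^1\) with absolute values. The second is the bound \(|\zeta_i(t)|\leq \max\{|\widetilde H(t,\pm e_i)|\}+|K(t)|\), which is still locally bounded. So I will rerun the proof of Lemma~\ref{prop-affine-minorant} for the real-valued \(\widetilde H\) and the nonnegative \(K\), making exactly these two modifications.

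\emph{Admissibility of the shifted integrand.} Let \(u\in\mathcal A\). By the Sobolev embedding recalled in the Notation section, \(u\in L^{p^\#}(\Omega)\), so \(\lambda\circ u\in L^1(\Omega)\). Hence \(\widetilde H(u,\nabla u)=H(u,\nabla u)+\lambda(u)\in L^1(\Omega)\), so \(u\) is admissible for the shifted integrand.

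\emph{Conclusion.} The modified lemma now yields \(K\circ u\in L^1(\Omega)\), and therefore
\[
H(u,0)=K(u)-\lambda(u)\in L^1(\Omega).
\]
Proposition~\ref{cor-lu0} then gives the \(L^\infty(\Omega)\)-regularity of \(\mathcal L\) at \(u\). The only delicate point is checking that the measurable-selection and divergence argument in the proof of Lemma~\ref{prop-affine-minorant} never uses the sign of \(H\) beyond the two places identified above. In particular, the cancellation \(\int_{\{u>T\}}F=0\) only uses \(F_+\in L^1\), which we still have.
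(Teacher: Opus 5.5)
Your proof is correct. It follows the paper's overall reduction: obtain \(H(u,0)\in L^1(\Omega)\) from the affine-minorant lemma, then invoke Proposition~\ref{cor-lu0}. Where you differ is in how you get around the nonnegativity required by Lemma~\ref{prop-affine-minorant}.

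The paper does not reopen the lemma. It applies it verbatim to the nonnegative integrand \(H_+=\max\{H,0\}\) with \(K(t)=H_+(t,0)\). It observes that \(H_+(t,\cdot)\) still has an affine minorant at \(0\): use the original \(\zeta_t\) when \(H(t,0)>0\), and \(\zeta=0\) when \(H(t,0)\leq 0\). This yields \(H_+(u,0)\in L^1(\Omega)\) without using (ii) at all. Hypothesis (ii) and \(u\in L^{p^\#}(\Omega)\) enter only at the end, to control \(H_-(u,0)\) through \(H_-(t,0)\leq H_+(t,0)+|c||t|^{p^\#}+|d|\).

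You instead use (ii) up front, shifting \(H\) by \(\lambda(t)\) so that \(K\geq 0\), and then rerun the lemma for a real-valued integrand. Your account of where signs enter the lemma's proof is accurate:
\begin{itemize}
\item the bound \(F_+\leq |\widetilde H(u,\nabla u)|\);
\item the local bound on the selection \(\zeta\);
\item implicitly, the use of \(K\geq 0\) in the tail estimate \(\int_{\{u>T\}}K(u)\leq\int_{\{u>T\}}\widetilde H(u,\nabla u)\), which your shift guarantees.
\end{itemize}
The measurable-selection and divergence steps are indeed sign-free.

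The paper's positive-part trick is shorter, keeps the lemma a black box, and shows that (ii) is needed only for the negative part. Your route costs a re-verification of the lemma's proof. In exchange it gives a slightly more general version of that lemma (continuous real-valued \(H\), nonnegative locally bounded Borel \(K\)), which could be reused elsewhere.
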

\begin{proof}
Set
\[
H_+:=\max\{H,0\},
\qquad
H_-:=\max\{-H,0\}.
\]
For every \(t\in\R\), let \(\zeta_t\in\R^N\) be as in assumption {\rm(i)}.
If \(H(t,0)>0\), then
\[
H_+(t,\xi)
\geq H(t,\xi)
\geq H(t,0)+\langle\zeta_t,\xi\rangle
=
H_+(t,0)+\langle\zeta_t,\xi\rangle
\]
for every \(\xi\in\R^N\). If instead \(H(t,0)\leq0\), then
\[
H_+(t,\xi)\geq0
=
H_+(t,0)+\langle0,\xi\rangle
\]
for every \(\xi\in\R^N\). Thus \(H_+\) satisfies the hypotheses of Lemma~\ref{prop-affine-minorant} with \(K(t)=H_+(t,0)\).

Let \(u\in \mathcal{A}\). Then
\(
H_+(u,\nabla u)\in L^1(\Omega)
\)
because
\[ 
0\leq H_+(u,\nabla u)
\leq
|H(u,\nabla u)|.
\]
Hence \(u\) belongs to the admissible class associated with \(H_+\).
Lemma~\ref{prop-affine-minorant}, applied to \(H_+\), yields
\[
H_+(u,0)\in L^1(\Omega).
\]
By Lemma~\ref{lm-Sobolev}, 
\(u\in L^{p^\#}(\Omega)\).
By assumption {\rm(ii)},
\[
H_-(t,0)
\leq H_+(t,0)+
|c||t|^{p^{\#}}+|d|
\qquad\text{for every }t\in\R.
\]
Consequently, since we have already proved that \(H_+(u,0)\in L^1(\Omega)\), we get
\[
H_-(u,0)\in L^1(\Omega).
\]
It follows that
\[
H(u,0)=H_+(u,0)-H_-(u,0)\in L^1(\Omega),
\]
and Proposition~\ref{cor-lu0} yields the conclusion.
\end{proof}
In particular, if \(H\) is nonnegative and convex with respect to the second variable, then \(\mathcal L\) is \(L^\infty(\Omega)\)-regular at every \(u\in \mathcal{A}\); we thus recover~\cite[Proposition~5.3]{Bousquet-Annali}.
Actually, the conclusion holds even if \(H\) is merely convex outside a ball, as a consequence of the following corollary:

\begin{theorem}
\label{cor-barycentric-subgradients}
Let \(H:\R\times\R^N\to[0,+\infty)\) be continuous. Let \(\xi_1,\dots,\xi_m\in\R^N\), and assume that there
exist \(\lambda_1,\dots,\lambda_m>0\) such that
\[
\sum_{i=1}^m\lambda_i=1,
\qquad
\sum_{i=1}^m\lambda_i\xi_i=0.
\] 
Assume that there exists a Borel function \(q:\R\to [0,+\infty)\) such that, for every
\(t\in\R\), there exist
\[
\zeta_{1,t},\dots,\zeta_{m,t}\in\R^N
\]
satisfying
\begin{equation}\label{eq2607}
H(t,\xi)\geq
H(t,\xi_i)+\langle\zeta_{i,t},\xi-\xi_i\rangle
\qquad\text{for every }\xi\in\R^N
\end{equation}
and 
\[
\sum_{i=1}^m
\lambda_i\langle\zeta_{i,t},\xi_i\rangle
\leq q(t).
\]
Then, for every \(u\in \mathcal{A}\) such that \(q\circ u \in L^{1}(\Omega)\), one has
\[
H(u,\xi_i)\in L^1(\Omega)
\qquad\text{for every }i=1,\dots,m.
\]
Consequently, \(\mathcal L\) is \(L^\infty(\Omega)\)-regular at \(u\).
\end{theorem}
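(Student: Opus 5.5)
The plan is to combine the subgradient inequalities at \(\xi_1,\dots,\xi_m\) into a single affine minorant with constant term \(\sum_i\lambda_i H(t,\xi_i)\), apply Lemma~\ref{prop-affine-minorant}, and then conclude with Corollary~\ref{prop-H(u,0)}.

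\textbf{Step 1: the averaged minorant.} Multiply \eqref{eq2607} by \(\lambda_i\) and sum over \(i\). Using \(\sum_i\lambda_i=1\), this gives for every \(\xi\)
\[
H(t,\xi)\geq \sum_{i=1}^m\lambda_i H(t,\xi_i)+\Big\langle \sum_{i=1}^m\lambda_i\zeta_{i,t},\xi\Big\rangle-\sum_{i=1}^m\lambda_i\langle\zeta_{i,t},\xi_i\rangle.
\]
The last sum is at most \(q(t)\), so
\[
H(t,\xi)+q(t)\geq K(t)+\langle\zeta_t,\xi\rangle,
\]
with \(K(t):=\sum_i\lambda_iH(t,\xi_i)\) and \(\zeta_t:=\sum_i\lambda_i\zeta_{i,t}\).

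\textbf{Step 2: apply Lemma~\ref{prop-affine-minorant}.} \(K\) is continuous, hence Borel and locally bounded, and nonnegative.

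The obstacle is the term \(q(t)\). I would absorb it into the Lagrangian by setting \(\widetilde H(t,\xi):=H(t,\xi)+q(t)\). Then \(\widetilde H(u,\nabla u)\in L^1\) because \(q\circ u\in L^1\), so \(u\) is admissible for \(\widetilde H\).

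However, the lemma asks for a continuous \(H\), and \(q\) is only Borel. So I would re-check the proof of Lemma~\ref{prop-affine-minorant}. Continuity in \(\xi\) is used in the reduction to \(\mathbb{Q}^N\), and this survives since \(\widetilde H(t,\cdot)\) is continuous. Borel measurability in \(t\) is what the measurable selection needs, and that also survives.

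The remaining use of continuity in \(t\) is the local boundedness of \(\zeta\), obtained by testing at \(\pm e_i\). This now gives
\[
|\zeta_i(t)|\leq \max_\pm H(t,\pm e_i)+q(t),
\]
which need not be locally bounded. Two options:
\begin{itemize}
\item Work directly with \(F=\langle\zeta(u),\nabla u\rangle\). The chain-rule argument needs \(Z\) locally Lipschitz on the range of \(v_k\), which is bounded, so a local bound on \(\zeta\) is what is really required.
\item Alternatively, replace \(q\) by a locally bounded Borel majorant-compatible version. This is harmless on \([-T,T]\) only if \(q\) is locally bounded there.
\end{itemize}
Failing both, I would directly bound \(\zeta_{i,t}\) through the \(H(t,\xi_i\pm e_j)\), which are locally bounded by continuity of \(H\). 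Testing \eqref{eq2607} at \(\xi=\xi_i\pm e_j\) gives
\[
|\langle\zeta_{i,t},e_j\rangle|\leq H(t,\xi_i\pm e_j)+H(t,\xi_i),
\]
so each \(\zeta_{i,t}\), and hence \(\zeta_t\), can be chosen locally bounded. This is the fix I would adopt: it is exactly what the modified lemma's proof needs. The proof of Lemma~\ref{prop-affine-minorant} then goes through verbatim for \(\widetilde H\), yielding \(K\circ u\in L^1(\Omega)\). Since every \(\lambda_i>0\) and \(H\geq0\), each \(H(u,\xi_i)\leq K(u)/\lambda_i\) is summable.

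\textbf{Step 3: conclude.} Apply Corollary~\ref{prop-H(u,0)} with the vectors \(\xi_1,\dots,\xi_m\), which contain \(0\) in their convex hull, and with \(q_0=0\), \(h_0(s):=\sum_i H(s,\xi_i)\). Condition \eqref{eq1245} is immediate from the continuity of \(H\), since there is no \(x\)-dependence. Then \(h_0\circ u\in L^1(\Omega)\) by Step 2, which gives \(L^\infty(\Omega)\)-regularity at \(u\).
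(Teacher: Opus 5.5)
\textbf{Same route as the paper.} Your proof is the paper's proof. The paper also averages the supporting inequalities with weights \(\lambda_i\), applies Lemma~\ref{prop-affine-minorant} to \(\widetilde H(t,\xi)=H(t,\xi)+q(t)\) with \(K=\sum_i\lambda_iH(\cdot,\xi_i)\), uses \(H\geq0\) and \(\lambda_i>0\) to get each \(H(u,\xi_i)\in L^1(\Omega)\), and concludes with Corollary~\ref{prop-H(u,0)} for \(q_0=0\), \(h_0=\sum_iH(\cdot,\xi_i)\). The paper does this without comment.

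Your remark that \(\widetilde H\) is not continuous therefore points to something the written proof passes over. You also correctly locate the only step of the lemma that is affected: the local boundedness of the selected \(\zeta\), which is needed for \(Z\) to be locally Lipschitz. Your observation is the right repair: testing at \(\xi_i\pm e_j\) and using \(H\geq0\) shows that subgradients of \(H(t,\cdot)\) at \(\xi_i\) are bounded by continuous functions of \(t\).

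\textbf{One imprecision in the repair.} The claim that the lemma's proof then runs verbatim is not quite right. That proof never uses the given \(\zeta_t\). It takes a Borel selection of the whole set \(G(t)=\{\zeta:\widetilde H(t,\xi)\geq K(t)+\langle\zeta,\xi\rangle \text{ for all } \xi\}\). For \(\widetilde H\), an arbitrary element of \(G(t)\) is only controlled by \(\max_\pm H(t,\pm e_j)+q(t)\); for instance, it can be of size \(\sqrt{q(t)}\) when \(H(t,\xi)=|\xi|^2\). So knowing that some element of \(G(t)\) is locally bounded does not make the selection locally bounded.

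The simplest fix is to shrink \(q\). Set
\(c(t):=\sum_i\lambda_i|\xi_i|\sum_j\bigl(H(t,\xi_i+e_j)+H(t,\xi_i-e_j)\bigr)\). This is continuous, and by the same testing at \(\xi_i\pm e_j\) it dominates \(\sum_i\lambda_i\langle\zeta_{i,t},\xi_i\rangle\). Then \(q':=\min\{q,c\}\) is Borel and locally bounded, still satisfies the hypothesis, and \(q'\circ u\leq q\circ u\). With \(\widetilde H=H+q'\), every element of \(G(t)\) is bounded by \(\max_\pm H(t,\pm e_j)+q'(t)\), hence locally bounded. The lemma's proof then goes through unchanged, since elsewhere it only uses continuity in \(\xi\) and Borel measurability in \(t\).

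Alternatively, you can select from \(G(t)\cap\overline{B}(0,R(t))\) with \(R\) continuous. This set is nonempty by your bound, and its measurability follows from the same countable dense-set argument with the extra constraint \(|\zeta_j|\leq R(t)+1/i\).
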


\begin{proof}
Multiplying the supporting inequalities~\eqref{eq2607} by \(\lambda_i\) and summing, we obtain
\[
H(t,\xi)\geq
\sum_{i=1}^m\lambda_iH(t,\xi_i)
+
\left\langle
\sum_{i=1}^m\lambda_i\zeta_{i,t},\xi
\right\rangle
-q(t).
\]
Lemma~\ref{prop-affine-minorant}, applied with
\[
\widetilde{H}(t,\xi):=H(t,\xi)+q(t)
\]
and
\[
K(t):=\sum_{i=1}^m\lambda_iH(t,\xi_i),
\]
yields
\[
\sum_{i=1}^m\lambda_iH(u,\xi_i)\in L^1(\Omega).
\]
Since \(H\geq0\) and \(\lambda_i>0\), it follows that
\[
H(u,\xi_i)\in L^1(\Omega)
\qquad\text{for every }i=1,\dots,m.
\]
The conclusion is then a consequence of Corollary~\ref{prop-H(u,0)} applied with \(q_0=0\) and \(h_0(s)=\sum_{i=1}^{m}H(s,\xi_i)\).
\end{proof}

\begin{remark}\label{rem: exampleq}
Let \(H:\R\times \R^N\to [0,+\infty)\) be continuous. 
Assume that there exist \(0<r<R\) such that for every \(t\in \R\), the convex subdifferential  \(\partial H(t,\cdot)(\xi)\)  is nonempty at every \(\xi\) in the annulus \(B_R\setminus B_r\)  (here, \(B_R\)  denotes the open ball of radius \(R\) and center \(0\) in \(\R^N\), and similarly for \(B_r\)).  Assume further that there exist \(c>0\) and \(d\in \R\) such that
\begin{equation}\label{eq2657}
\forall t\in \R, \forall \xi\in B_R\setminus B_r, \qquad \operatorname{dist}(0,\partial H(t,\cdot)(\xi))\leq c|t|^{p^\#}+d.
\end{equation}
Then, the assumptions of Theorem~\ref{cor-barycentric-subgradients} are satisfied. Indeed, one only need to take  \(2\)  points \(\xi_1,  \xi_{2} \in B_{R}\setminus B_r\) such that \(0\in \operatorname{co}(\xi_1, \xi_{2})\). In view of~\eqref{eq2657}, for every \(i=1,2\) and every \(t\in \R\), there exists  \(\zeta_{i,t}\in \partial H(t,\xi_i)\) such that 
\[
|\zeta_{i,t}|\leq c|t|^{p^\#}+d.
\]
By the Cauchy-Schwarz inequality, it follows that
\[
\frac{1}{2}\langle \zeta_{1,t}, \xi_1\rangle +\frac{1}{2}\langle \zeta_{2,t}, \xi_2\rangle\leq R (c|t|^{p^\#}+d).
\]
Set \(q(t):=R(c|t|^{p^\#}+d)_+\).
By Lemma~\ref{lm-Sobolev}, for every \(u\in W^{1,p}(\Omega)\),  one has \(q\circ u \in L^{1}(\Omega)\).  This proves that all the assumptions of Theorem~\ref{cor-barycentric-subgradients} are satisfied. 
\end{remark}

\begin{conjecture}
In view of Corollary~\ref{corollary-autonomous-symetric} and the results of Section~\ref{section:autonomous}, we conjecture that in the autonomous case, \(\mathcal{L}\) is \(L^{\infty}(\Omega)\)-regular at every \(u\in \mathcal{A}\). 
\end{conjecture}

\section{Some classes of non-autonomous Lagrangians}
\subsection{State-independent Lagrangians}
The next result completely characterizes the Lagrangians not depending on the state variable for which the Lavrentiev gap in \(L^{\infty}(\Omega)\) does not occur.

\begin{theorem}\label{prop:nonaton2026}
Let \(g:\Omega\times\R^N\to\R\) be a Carath\'eodory function, let
\[
L(x,t,\xi):=g(x,\xi),
\]
and assume that \(\mathcal A\neq\varnothing\). Then the following properties are equivalent:
\begin{enumerate}
\item \(\mathcal L\) is \(L^\infty(\Omega)\)-regular at some \(u\in\mathcal A\);
\item there exists \(b\in W^{1,p}(\Omega)\cap L^\infty(\Omega)\) such that
\[
g(\cdot,\nabla b)\in L^1(\Omega);
\]
\item \(\mathcal L\) is \(L^\infty(\Omega)\)-regular at every \(u\in\mathcal A\).
\end{enumerate}
\end{theorem}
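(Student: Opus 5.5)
We prove the cycle of implications (3)\(\Rightarrow\)(1)\(\Rightarrow\)(2)\(\Rightarrow\)(3). The first implication is immediate, since \(\mathcal A\neq\varnothing\).

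For (1)\(\Rightarrow\)(2), let \(u\in\mathcal A\) be a point at which \(\mathcal L\) is \(L^\infty(\Omega)\)-regular. By definition there is a sequence \((u_k)\subset\mathcal A\cap L^\infty(\Omega)\) approximating \(u\). In particular \(\mathcal A\cap L^\infty(\Omega)\neq\varnothing\), so any \(b:=u_k\) is a bounded Sobolev function with \(g(\cdot,\nabla b)\in L^1(\Omega)\). This gives (2).

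The main step is (2)\(\Rightarrow\)(3), which we obtain from Proposition~\ref{prop-single-oscillator}. Take \(b\) as in (2) and set
\[
q(x):=|g(x,\nabla b(x))|+|\nabla b(x)|^p\in L^1(\Omega),\qquad h:\equiv 0.
\]
Since \(L\) does not depend on the state variable, for a.e. \(x\in\Omega\) and every \(s,t\in\R\) (in particular when \(|s-t|\leq\operatorname*{ess\,osc}_\Omega b\)) we have
\[
\overline L(x,s,\nabla b(x))=q(x)\leq q(x)+h(t).
\]
Moreover \(h\circ u=0\in L^1(\Omega)\) for every \(u\in\mathcal A\), so Proposition~\ref{prop-single-oscillator} yields regularity at every \(u\in\mathcal A\).

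We expect no real obstacle in this argument. The only points to check are the following.
\begin{itemize}
\item \(b\) lies in \(W^{1,p}(\Omega)\cap L^\infty(\Omega)\). It need not satisfy the boundary condition, which is fine because Lemma~\ref{lm-translated-oscillators} translates \(b\) by large constants exceeding \(\|\varphi\|_{L^\infty}\).
\item The maximum over \(s\) appearing in Lemma~\ref{lm-translated-oscillators} is trivial here, since \(\overline L(x,s,\xi)\) does not depend on \(s\).
\end{itemize}
As a consistency check, the barriers \(c_k^\pm=\pm M_k^\pm+b-\text{const}\) give tail contributions \(\int_{[u>M_k^+]}q\,dx\to0\), because the measures of these sets tend to zero and \(q\in L^1(\Omega)\).
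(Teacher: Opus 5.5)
Your proposal is correct and follows essentially the same route as the paper. You argue (3)\(\Rightarrow\)(1) trivially, take \(b\) to be one of the bounded approximants for (1)\(\Rightarrow\)(2), and for (2)\(\Rightarrow\)(3) apply Proposition~\ref{prop-single-oscillator} with \(q=|g(\cdot,\nabla b)|+|\nabla b|^p\) and \(h\equiv 0\), exactly as the paper does.
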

\begin{proof}
The implication \((3)\Rightarrow (1)\) is immediate. Assume that \((1)\) holds. By the definition of \(L^\infty(\Omega)\)-regularity, there exists a sequence
\[
(u_j)_{j\geq1}\subset\mathcal A\cap L^\infty(\Omega).
\]
Taking \(b:=u_1\), we obtain
\[
b\in W^{1,p}(\Omega)\cap L^\infty(\Omega),
\qquad
g(\cdot,\nabla b)\in L^1(\Omega),
\]
and hence \((1)\Rightarrow(2)\).
Assume now that \((2)\) holds, and let \(u\in\mathcal A\). Set
\[
q(x):=|g(x,\nabla b(x))|+|\nabla b(x)|^p,
\qquad
h:=0.
\]
Then \(q\in L^1(\Omega)\), \(h\circ u\in L^1(\Omega)\), and, for a.e. \(x\in\Omega\) and every \(s,t\in\R\),
\[
\overline L(x,s,\nabla b(x))
=|g(x,\nabla b(x))|+|\nabla b(x)|^p
=q(x).
\]
Thus all the assumptions of Proposition~\ref{prop-single-oscillator} are satisfied. Hence \(\mathcal L\) is \(L^\infty(\Omega)\)-regular at \(u\). Since \(u\in\mathcal A\) was arbitrary, \((2)\Rightarrow (3)\).
\end{proof}

\subsection{Comparison with autonomous Lagrangians}
Some of the results obtained in the autonomous setting extend to non-autonomous Lagrangians by comparison. The following proposition is a variant of Corollary~\ref{th-real-valued-affine-minorant}.

\begin{proposition}\label{prop:comparison2026}
We assume that there exists a continuous function \(H:\R\times \R^N \to [0,+\infty)\), a Carath\'eodory function \(g:\Omega\times \R^N \to [0,+\infty)\) and a constant \(A>0\) such that for a.e. \(x\in \Omega\), for every \((t,\xi)\in \R\times \R^N\):
\begin{equation}\label{eq575}
H(t,\xi)\leq L(x,t,\xi) \leq A H(t,\xi) +g(x,\xi).
\end{equation}
We further assume that \(g(\cdot, 0) \in L^{1}(\Omega)\) and that for every \(t\in \R\), there exists \(\zeta \in \R^N\) such that
\[
\forall \xi \in \R^N, \qquad H(t,\xi)\geq H(t,0) + \langle \zeta, \xi \rangle.
\]
Then \(\mathcal{L}\) is \(L^{\infty}(\Omega)\)-regular at every \(u\in \mathcal{A}\) such that \(g(x,\nabla u)\in L^{1}(\Omega)\).
\end{proposition}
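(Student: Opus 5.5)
The plan is to mimic the proof of Theorem~\ref{th-real-valued-affine-minorant}: first obtain \(H(u,0)\in L^1(\Omega)\) from Lemma~\ref{prop-affine-minorant}, and then conclude with constant barriers via Remark~\ref{rk-b=0} and Lemma~\ref{lm-classic}.

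Fix \(u\in\mathcal A\) with \(g(\cdot,\nabla u)\in L^1(\Omega)\). By the left inequality in \eqref{eq575}, \(0\leq H(u,\nabla u)\leq L(x,u,\nabla u)\). Hence \(H(u,\nabla u)\in L^1(\Omega)\), so \(u\) belongs to the admissible class associated with the autonomous Lagrangian \(H\). Note that here \(L\geq0\).

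We apply Lemma~\ref{prop-affine-minorant} to \(H\) with \(K(t):=H(t,0)\). This \(K\) is continuous, hence Borel and locally bounded, and the subgradient assumption is exactly the hypothesis of the lemma. We obtain \(H(u,0)\in L^1(\Omega)\). The hypothesis \(g(\cdot,\nabla u)\in L^1\) was used only to place \(u\) in the admissible class; in fact membership in \(\mathcal A\) already gives this.

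Next we construct the constant barriers. Since \(t\mapsto H(t,0)\) is continuous, Lemma~\ref{lm-mkpm} (as in the proof of Proposition~\ref{cor-lu0}) provides \(M_k^\pm\to+\infty\) with
\[
|[u>M_k^+]|\,H(M_k^+,0)\to0,\qquad |[u<-M_k^-]|\,H(-M_k^-,0)\to0.
\]
By the right inequality in \eqref{eq575},
\[
\int_{[u>M_k^+]}\overline L(x,M_k^+,0)\,dx\leq A\,|[u>M_k^+]|\,H(M_k^+,0)+\int_{[u>M_k^+]}g(x,0)\,dx .
\]
Both terms tend to zero: the first by the choice of \(M_k^+\), the second because \(g(\cdot,0)\in L^1(\Omega)\) and \(|[u>M_k^+]|\to0\). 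The gradient contribution \(|0|^p\) vanishes. The negative tail is handled in the same way. Remark~\ref{rk-b=0} together with Lemma~\ref{lm-classic} then yields the conclusion.

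The only delicate point is the choice of truncation levels. If Lemma~\ref{lm-mkpm} were unavailable in this form, I would choose \(M_k^+\geq k\) nearly minimizing \(H(\cdot,0)\) on \([k,+\infty)\), as in the proof of Proposition~\ref{lm-Giotto}. Then \(H(M_k^+,0)\leq H(u,0)+1/k\) on \([u>M_k^+]\), and the bound follows from \(H(u,0)\in L^1\).
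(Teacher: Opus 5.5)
Your proof is correct. It shares its first half with the paper's proof: both get \(H(u,\nabla u)\in L^1(\Omega)\) from the left inequality in \eqref{eq575}, then \(H(u,0)\in L^1(\Omega)\) from Lemma~\ref{prop-affine-minorant}, and both use the same constant truncations \(u_k=\min(M_k^+,\max(u,-M_k^-))\).

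The two arguments differ in how they verify energy convergence for \(L\). The paper first invokes Proposition~\ref{cor-lu0} as a black box for \(H\), which gives \(H(u_k,\nabla u_k)\to H(u,\nabla u)\) in \(L^1(\Omega)\). It then shows \(g(\cdot,\nabla u_k)\to g(\cdot,\nabla u)\) in \(L^1(\Omega)\) by dominated convergence. Finally it applies Vitali's theorem to the bound \(0\leq L(x,u_k,\nabla u_k)\leq AH(u_k,\nabla u_k)+g(x,\nabla u_k)\). The uniform integrability of \(g(\cdot,\nabla u_k)\) is exactly where the paper uses the hypothesis \(g(\cdot,\nabla u)\in L^1(\Omega)\).

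You instead apply Lemma~\ref{lm-classic} directly to \(L\). You use the right inequality in \eqref{eq575} only at \(\xi=0\) on the tails, where the barrier gradient vanishes. On the untruncated region \(L(x,u_k,\nabla u_k)=L(x,u,\nabla u)\) is already integrable, so no comparison with \(H+g\) is needed there.

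Your route is shorter and avoids Vitali. It also shows that the assumption \(g(\cdot,\nabla u)\in L^1(\Omega)\) is superfluous: the conclusion holds at every \(u\in\mathcal A\). The paper's route stays modular by reusing the autonomous result, but pays for this with that extra hypothesis.
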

\begin{proof}
Let \(u\in\mathcal A\) be such that
\[
g(x,\nabla u)\in L^1(\Omega).
\]
By the first inequality in \eqref{eq575}, one has
\[
H(u,\nabla u)\in L^1(\Omega).
\]
The affine-minorant assumption and
Lemma~\ref{prop-affine-minorant} applied to \(K(t)=H(t,0)\) yield
\[
H(u,0)\in L^1(\Omega).
\]
In view of Proposition~\ref{cor-lu0} and its proof, there exist two sequences \((M^{+}_k)_{k\geq 1}\) and \((M^{-}_k)_{k\geq 1}\) which tend to \(+\infty\) and such that the functions \(u_k:=\min(M^{+}_k, \max (u, -M^{-}_k))\) belong to \(W^{1,p}(\Omega)\cap L^{\infty}(\Omega)\) and satisfy
\[
\lim_{k\to +\infty} \big(\|u_k-u\|_{W^{1,p}(\Omega)} +\|H(u_k, \nabla u_k)-H(u, \nabla u)\|_{L^{1}(\Omega)} \big) =0.
\]

Next, observe that
\[
g(x,\nabla u_k)=
\begin{cases}
g(x,\nabla u),&-M^{-}_k<u<M^{+}_k,\\
g(x,0),&u\geq M^{+}_k\text{ or }u\leq-M^{-}_k,
\end{cases}
\quad\text{a.e. in }\Omega.
\]
Since \(M^{\pm}_k\to+\infty\), one has
\[
\lim_{k\to +\infty}\left|[u\geq M^{+}_k]\cup[u\leq-M^{-}_k]\right|=0.
\]
Moreover,
\[
|g(x,\nabla u_k)-g(x,\nabla u)|
\leq
\bigl(g(x,0)+g(x,\nabla u)\bigr)
\chi_{[u\geq M^{+}_k]\cup[u\leq-M^{-}_k]}.
\]
Since \(g(\cdot,0),g(\cdot,\nabla u)\in L^1(\Omega)\), the dominated convergence theorem yields
\[
g(\cdot,\nabla u_k)\longrightarrow g(\cdot,\nabla u)
\quad\text{in }L^1(\Omega).
\]
Furthermore, for a.e. \(x\in\Omega\), one has
\[
u_k(x)=u(x),
\qquad
\nabla u_k(x)=\nabla u(x)
\]
for all sufficiently large \(k\), and thus,
\[
\lim_{k\to +\infty}L(x,u_k,\nabla u_k)
=
L(x,u,\nabla u)
\quad\text{for a.e. }x\in\Omega.
\]
Moreover, by \eqref{eq575},
\[
0\leq L(x,u_k,\nabla u_k)
\leq
A H(u_k,\nabla u_k)+g(x,\nabla u_k).
\]
The families
\[
\bigl(H(u_k,\nabla u_k)\bigr)_k
\qquad\text{and}\qquad
\bigl(g(x,\nabla u_k)\bigr)_k
\]
are uniformly integrable, since they converge in \(L^1(\Omega)\). Hence
\[
\bigl(L(x,u_k,\nabla u_k)\bigr)_k
\]
is uniformly integrable. The Vitali convergence theorem therefore gives
\[
L(x,u_k,\nabla u_k)
\longrightarrow
L(x,u,\nabla u)
\quad\text{in }L^1(\Omega).
\]

\end{proof}

\subsection{Truncations by composition}
In this last section, we present an alternate approach to discard the Lavrentiev gap between \(W^{1,p}(\Omega)\) and \(W^{1,p}(\Omega)\cap L^\infty(\Omega)\). We construct barriers of the form \(\pi\circ u\), where \(\pi:\R\to \R\) is a suitable bounded function. Let us first recall the following standard result:

\begin{lemma}\label{lm-other-truncation}
Let \((\pi_i)_{i\geq 1}\) be a family of Lipschitz functions from \(\R\) into \(\R\) such that
\begin{enumerate}
\item There exists two sequences \((m_{i}^+)_{i\geq 1},(m_{i}^-)_{i\geq 1}\subset \R^+\) converging to  \(+\infty\)  and such that \(\pi_i(t)=t\) for every \(t\in [-m_{i}^-, m_{i}^+]\);
\item There exists \(M>0\) such that \(\|\pi_{i}'\|_{L^{\infty}(\R)}\leq M\) for every \(i\geq 1\).
\end{enumerate}
Then for every \(u\in W^{1,p}(\Omega)\), the sequence \((\pi_i(u))_{i\geq 1}\) converges to \(u\) in \(W^{1,p}(\Omega)\).
\end{lemma}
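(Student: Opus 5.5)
We argue directly, splitting \(\|\pi_i(u)-u\|_{W^{1,p}(\Omega)}^p\) into a zero-order term and a gradient term, and controlling each by dominated convergence on the tails \([u>m_i^+]\cup[u<-m_i^-]\). First we note that \(\pi_i(0)=0\) as soon as \(m_i^\pm\geq 0\), which holds by assumption. Together with \(\|\pi_i'\|_{L^\infty}\leq M\), this gives \(|\pi_i(t)|\leq M|t|\) for every \(t\in\R\). Since \(\pi_i\) is Lipschitz with \(\pi_i(0)=0\), the chain rule for Lipschitz functions of Sobolev maps (as in \cite{SerrinVarberg}, already invoked in the proof of Lemma~\ref{prop-affine-minorant}) gives \(\pi_i(u)\in W^{1,p}(\Omega)\). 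It also gives \(\nabla(\pi_i\circ u)=\pi_i'(u)\nabla u\) a.e., with the convention that the right-hand side is \(0\) wherever \(\nabla u=0\), in particular a.e. on level sets \([u=c]\) for \(c\) a nondifferentiability point of \(\pi_i\).

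Next we localize the error on the tails. On the set \(E_i:=[-m_i^-\leq u\leq m_i^+]\) we have \(\pi_i(u)=u\). Moreover \(\nabla\pi_i(u)=\nabla u\) a.e. on \(E_i\). To see this, note that in the open part \([-m_i^-<u<m_i^+]\) we have \(\pi_i'(u)=1\). On the level sets \([u=m_i^+]\) and \([u=-m_i^-]\), both \(\nabla u=0\) a.e. and \(\nabla\pi_i(u)=0\) a.e., by the standard fact that gradients of Sobolev functions vanish a.e. on level sets. Hence, with \(T_i:=[u>m_i^+]\cup[u<-m_i^-]\),
\[
|\pi_i(u)-u|^p\leq (M+1)^p|u|^p\chi_{T_i},\qquad |\nabla\pi_i(u)-\nabla u|^p\leq (M+1)^p|\nabla u|^p\chi_{T_i}\quad\text{a.e.}
\]

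Finally we pass to the limit. Since \(m_i^\pm\to+\infty\) and \(u\) is finite a.e., \(\chi_{T_i}\to0\) a.e. The majorants \((M+1)^p|u|^p\) and \((M+1)^p|\nabla u|^p\) are in \(L^1(\Omega)\), so dominated convergence gives \(\|\pi_i(u)-u\|_{W^{1,p}(\Omega)}\to0\).

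The only delicate point is the chain rule for merely Lipschitz \(\pi_i\), together with the a.e. identification of the gradient on \(E_i\). We handle both with the Serrin–Varberg convention and the level-set property \(\nabla u=0\) a.e. on \([u=c]\). These are the same tools already used in the proof of Lemma~\ref{prop-affine-minorant}, and the rest of the argument is routine.
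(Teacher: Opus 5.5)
Your proof is correct and follows essentially the same route as the paper: the chain rule for Lipschitz compositions (with the usual convention on the non-differentiability set), the pointwise bounds \((M+1)|u|\) and \((M+1)|\nabla u|\) on the tails, and dominated convergence. The differences are cosmetic. You get the zero-order bound from \(\pi_i(0)=0\) and \(|\pi_i(t)|\leq M|t|\), whereas the paper integrates \(\pi_i'-1\) from \(m_i^+\). You also keep the asymmetric tails \([u>m_i^+]\cup[u<-m_i^-]\) and treat the level sets \([u=m_i^+]\) and \([u=-m_i^-]\) explicitly, which the paper's proof passes over.
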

\begin{proof}
First, each \(u_i:=\pi_i(u)\) belongs to \(W^{1,p}(\Omega)\) and satisfies 
\[
\nabla u_i(x) = \pi_{i}'(u(x))\nabla u(x) \qquad \textrm{ a.e. } x\in \Omega
\] 
where the right-hand side is defined as \(0\) for a.e. \(x\in \Omega\) such that \(u(x)\)  is a non-differentiability point of \(\pi_i\).
Then
\[
\int_{\Omega}(|u_i-u|^p+|\nabla u_i - \nabla u|^p)\,dx = \int_{[|u|>m_i]}(|u_i-u|^p + |\pi_{i}'(u)\nabla u-\nabla u|^p)\,dx.
\]
On the set \([u>m_i]\), we write
\[
|\pi_i(u(x))-u(x)|=\left|\int_{m_i}^{u(x)}(\pi_{i}'(s)-1)\,ds\right| \leq (u(x)-m_i)(M+1)\leq (M+1)u(x)
\]
and
\[
|\pi_{i}'(u(x))\nabla u(x)-\nabla u(x)|\leq (M+1)|\nabla u(x)|.
\]
Using  similar estimates on the set \([u<-m_i]\), one gets
\[
\int_{\Omega}(|u_i-u|^p+|\nabla u_i - \nabla u|^p)\,dx \leq (M+1)^p\int_{[|u|>m_i]}(|u|^p+|\nabla u|^p)\,dx.
\]
By the dominated convergence theorem, one can conclude that
\[
\lim_{i\to +\infty}\int_{\Omega}(|u_i-u|^p+|\nabla u_i - \nabla u|^p)\,dx=0.
\]
\end{proof}

The preceding lemma gives the Sobolev convergence of compositions that
coincide with the identity on expanding intervals. We apply it to the following sawtooth functions:

\begin{definition}\label{def-sawtooth}
Let \(m^+, m^->0\), \(\varepsilon>0\) and \(\theta^+, \theta^-\in [0,1]\). We define
\(\pi:\R\to\R\) by
\[
\pi(t):=
\begin{cases}
-m^-+\theta^-\operatorname{dist}(t+m^-,2\varepsilon\mathbb Z),
& t<-m^-,\\
t,
& -m^- \leq t\leq m^+,\\
m^+-\theta^+\operatorname{dist}(t-m^+,2\varepsilon\mathbb Z),
& t>m^+.
\end{cases}
\]
Then \(\pi\) is bounded and Lipschitz, \(\pi(t)=t\) for every \(t\in [-m^-,m^+]\) and
\[
\pi'(t)\in\{-\theta^+,\theta^+\}
\qquad\text{a.e. } t\geq m^+,
\]
\[
\pi'(t)\in\{-\theta^-,\theta^-\}
\qquad\text{a.e. } t\leq -m^-,
\]
and
\[
\pi([m^+,+\infty))
\subset[m^+-\varepsilon,m^+],
\]
\[
\pi((-\infty,-m^-])
\subset[-m^-,-m^-+\varepsilon].
\]
\end{definition}
The function \(\pi\) coincides with the
identity on \([-m^-,m^+]\) and oscillates with slopes \(\pm\theta^\pm\) on the set \((-\infty, -m^-]\cup [m^+,+\infty) \).
\begin{figure}[H]
\centering
\includegraphics[width=0.85\textwidth]{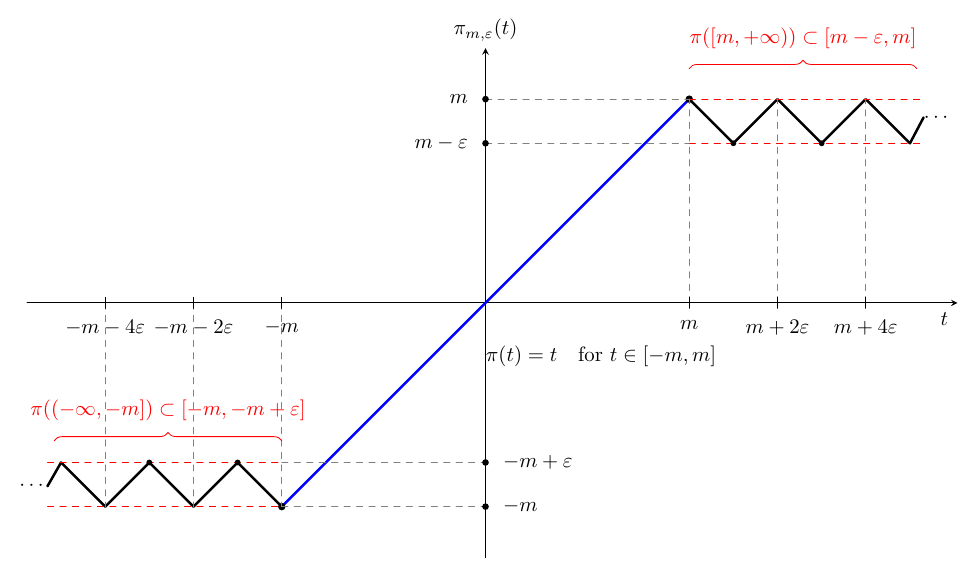}
\caption{The sawtooth truncation
\(\pi\). It coincides with the identity
on \([-m^-,m^+]\), oscillates between \(m^+-\varepsilon\) and \(m^+\) on \([m^+,+\infty)\), and between \(-m^-\) and \(-m^-+\varepsilon\) on 
\((-\infty,m^-]\). For simplicity, the figure depicts the symmetric case
\(\theta^+=\theta^-=1\), \(m^+=m^-=m\).}
\label{fig-pi-sawtooth}
\end{figure}
We are now in a position to state and prove the main result of this section, where we obtain a new sufficient condition to discard the Lavrentiev gaps in \(L^{\infty}(\Omega)\), in terms of the growth of the Lagrangians with respect to the state variable.

\begin{theorem}\label{coro-other-truncations}
Assume that there exist \(\theta^+, \theta^- \in [0,1]\), two  sequences \((m_{i}^+)_{i\geq1}, (m_{i}^-)_{i\geq1}\subset \R^+\) converging to
\(+\infty\),   a sequence  \((\varepsilon_i)_{i\geq1}\) of positive numbers, a summable function \(q\in L^{1}(\Omega)\)
and a constant \(C\geq 0\) such that, for every \(i\geq1\), for a.e.
\(x\in\Omega\), and for every \(\xi\in\mathbb R^N\),
\begin{equation}\label{eq3098}\max_{s\in[m_{i}^+-\varepsilon_i,m_{i}^+]}\bigl(|L(x,s,\theta^+ \xi)|+ |L(x,s,-\theta^+\xi)|\bigr)
\leq
C\inf_{s\in[m_{i}^+,+\infty)}|L(x,s,\xi)|+q(x)
\end{equation}
and
\begin{equation}\label{eq3107}
\max_{s\in[-m_{i}^-,-m_{i}^-+\varepsilon_i]}\bigl(|L(x,s,\theta^-\xi)|+
|L(x,s,-\theta^-\xi)| \bigr)
\leq
C\inf_{s\in(-\infty,-m_{i}^-]}|L(x,s,\xi)|+q(x).
\end{equation}
Then \(\mathcal L\) is \(L^\infty(\Omega)\)-regular at every
\(u\in\mathcal A\).
\end{theorem}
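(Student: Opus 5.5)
We plan to apply Lemma~\ref{lm-classic} with barriers of the form \(c_i^+:=\pi_i^+\circ u\) and \(c_i^-:=\pi_i^-\circ u\), or more simply to take directly \(u_i:=\pi_i\circ u\), where \(\pi_i\) is the sawtooth function of Definition~\ref{def-sawtooth} with parameters \(m_i^\pm,\varepsilon_i,\theta^\pm\). We discard finitely many indices so that \(m_i^\pm-\varepsilon_i>\|\varphi\|_{L^\infty}\). Then \(\pi_i(\varphi)=\varphi\), and \(\pi_i\) is Lipschitz and fixes the boundary values. Hence \(u_i-\varphi=\pi_i(u)-\pi_i(\varphi)\in W^{1,p}_0(\Omega)\); this is the standard fact for Lipschitz compositions with \(\pi_i=\mathrm{id}\) on a neighbourhood of the range of \(\varphi\). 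Moreover \(u_i\) is bounded, with values in \([-m_i^-,m_i^+]\). Since \(|\pi_i'|\leq 1\) and \(\pi_i=\mathrm{id}\) on \([-m_i^-,m_i^+]\), Lemma~\ref{lm-other-truncation} gives \(u_i\to u\) in \(W^{1,p}(\Omega)\). It therefore remains to prove convergence of the energies and that \(u_i\in\mathcal A\).

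For the energy, on \([-m_i^-\leq u\leq m_i^+]\) we have \(u_i=u\) and \(\nabla u_i=\nabla u\) a.e. So
\[
\int_\Omega|L(x,u_i,\nabla u_i)-L(x,u,\nabla u)|\,dx\leq \int_{[u>m_i^+]\cup[u<-m_i^-]}\bigl(|L(x,u_i,\nabla u_i)|+|L(x,u,\nabla u)|\bigr)\,dx .
\]
The second term tends to \(0\) by absolute continuity of the integral, since \(|[|u|>\min m_i^\pm]|\to0\). For the first term, on \([u>m_i^+]\) we have \(u_i(x)\in[m_i^+-\varepsilon_i,m_i^+]\) and \(\nabla u_i(x)=\pi_i'(u(x))\nabla u(x)\in\{\theta^+\nabla u(x),-\theta^+\nabla u(x)\}\) a.e. (with \(\nabla u_i=0\) at the countably many kinks, where \(\nabla u=0\) a.e. anyway; if \(\nabla u(x)=0\), the choice \(\xi=0\) still falls within \eqref{eq3098}). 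Applying \eqref{eq3098} with \(\xi=\nabla u(x)\), and using that \(u(x)\in[m_i^+,+\infty)\) so the infimum is at most \(|L(x,u(x),\nabla u(x))|\), we get
\[
|L(x,u_i,\nabla u_i)|\leq C|L(x,u,\nabla u)|+q(x)\qquad\text{a.e. on }[u>m_i^+].
\]
The negative tail is handled symmetrically with \eqref{eq3107}. The first term is thus bounded by \(\int_{[|u|>\min m_i^\pm]}\bigl((C+1)|L(x,u,\nabla u)|+q\bigr)\,dx\to0\). The same pointwise bound shows \(L(x,u_i,\nabla u_i)\in L^1(\Omega)\), so \(u_i\in\mathcal A\cap L^\infty(\Omega)\).

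The main obstacle is technical rather than conceptual. \eqref{eq3098} holds only for a.e. \(x\), for every \(\xi\), and the max and inf are over uncountable sets of \(s\). We need to evaluate it at the measurable pair \((u_i(x),\nabla u(x))\) and to compare with the value at \(s=u(x)\). Since the inequality is asserted for a.e. \(x\) and every \(\xi\) simultaneously, this substitution is legitimate. Measurability of the composed quantities follows from \(L\) being Carathéodory, and the max over a compact \(s\)-interval is attained by continuity in \(s\). One also checks the a.e. chain rule for \(\pi_i\circ u\) using the Serrin–Varberg convention recalled in Lemma~\ref{prop-affine-minorant}. Finally, the case \(\theta^\pm=0\) degenerates to constant truncation and is covered by the same argument.
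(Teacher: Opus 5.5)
Your proposal is correct and is essentially the paper's proof: you take \(u_i=\pi_i\circ u\) with the sawtooth of Definition~\ref{def-sawtooth} and get \(W^{1,p}\) convergence from Lemma~\ref{lm-other-truncation}. You then use \eqref{eq3098}--\eqref{eq3107} at \(\xi=\nabla u(x)\) to obtain the pointwise tail bound \(|L(x,u_i,\nabla u_i)|\leq C|L(x,u,\nabla u)|+q(x)\), which gives both \(u_i\in\mathcal A\) and the \(L^1\) convergence of the energy densities. The one slip is that discarding indices cannot always give \(m_i^\pm-\varepsilon_i>\|\varphi\|_{L^\infty}\); this is harmless, since \(m_i^\pm>\|\varphi\|_{L^\infty}\) already suffices for the boundary condition, and you may anyway replace \(\varepsilon_i\) by \(\min\{\varepsilon_i,1\}\), which only shrinks the maxima in the hypotheses.
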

\begin{proof}
 Let \(u\in\mathcal A\).
 For every \(i\geq 1\), consider the bounded Lipschitz function \(\pi_i\) 
given by Definition~\ref{def-sawtooth} with the parameters \(m^\pm:=m_{i}^\pm, \theta^{\pm}, \varepsilon:=\varepsilon_i\) and set
\[
u_i:=\pi_i\circ u.
\]
Since \(\pi_i\) is bounded,
\[
u_i\in L^\infty(\Omega).
\]
By Lemma~\ref{lm-other-truncation}, the sequence \((\pi_i(u))_{i\geq 1}\) converges to \(u\) in \(W^{1,p}(\Omega)\). 
Furthermore, for \(i\) sufficiently large,
\[
m_{i}^{\pm}>\|\varphi\|_{L^\infty(\mathbb R^N)},
\]
and therefore
\[
u_i\in W^{1,p}_\varphi(\Omega).
\]
We proceed to prove that \((L(x,u_i, \nabla u_i))_{i\geq 1}\) converges to \(L(x,u,\nabla u)\) in \(L^{1}(\Omega)\).
By the Sobolev chain rule, for a.e. \(x\in \Omega\), one has \(\nabla u_i(x)=\pi_{i}'(u(x))\nabla u(x)\) (where as usual the right-hand side is defined to be \(0\) for  a.e. \(x\in \Omega\) such that \(u(x)\) is a non-differentiability point of \(\pi_i\)).
On the set \([|u|<m_i]\), one has in fact \(u_i=u\) and \(\nabla u_i=\nabla u\). Therefore 
\[
|L(x,u_i(x), \nabla u_i(x))|=|L(x,u(x),\nabla u(x))|.
\]
The properties of  \(\pi_i\) imply that for a.e. \(x\in [u\geq m_{i}^+]\),   one has \(u_i(x)\in [m_{i}^+-\varepsilon,m_{i}^+]\) and \(\nabla u_i(x)=\pm\theta^+ \nabla u(x)\) and thus by \eqref{eq3098}, one gets
\[
|L(x,u_i(x), \nabla u_i(x))|\leq C\bigl(|L(x,u(x),\nabla u(x))|+q(x)\bigr).
\]
A similar estimate holds on the set \([u\leq -m_{i}^-]\). Hence,  for a.e. \(x\in \Omega\), for every \(i\geq 1\),
\[
|L(x,u_i(x),\nabla u_i(x))|
\leq
C_0\bigl(
 |L(x,u(x),\nabla u(x))|+q(x)
\bigr),
\]
where \(C_0:=\max\{1,C\}\).
In particular,
\(L(x,u_i,\nabla u_i)\in L^1(\Omega)\) so that 
\(u_i\in\mathcal A\). Moreover, 
\[
\int_\Omega
|L(x,u_i,\nabla u_i)-L(x,u,\nabla u)|\,dx
\leq (1+C_0)
\int_{[u\not\in (m_{i}^-, m_{i}^+)]}
(|L(x,u,\nabla u)|+q(x))\,dx
\longrightarrow 0.
\]
The proof is complete.
\end{proof}

We first apply Theorem~\ref{coro-other-truncations} in the case when one can rely on the monotonicity of the Lagrangians with respect to the state variable.

\begin{corollary}\label{cor-monotone-tails}
Let
\[
L:\Omega\times\R\times\R^N\to[0,+\infty)
\]
be a Carath\'eodory integrand. Assume that there exist \(C>0\) and \(t_0>0\) such that
\[
L(x,t,-\xi)\leq C L(x,t,\xi)
\]
for a.e. \(x\in\Omega\) and every \((t,\xi)\in\R\times\R^N\), and such that, for a.e. \(x\in\Omega\) and every \(\xi\in\R^N\), the function
\[
t\longmapsto L(x,t,\xi)
\]
is nondecreasing on \((t_0,+\infty)\) and nonincreasing on \((-\infty,-t_0)\). Then \(\mathcal L\) is \(L^\infty(\Omega)\)-regular at every \(u\in\mathcal A\).
\end{corollary}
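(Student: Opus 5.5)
The plan is to apply Theorem~\ref{coro-other-truncations} with \(\theta^+=\theta^-=1\), constant \(1+C\), \(q\equiv 0\) (or a harmless summable constant), \(m_i^\pm:=t_0+1+i\) and \(\varepsilon_i:=1\). Since \(L\geq 0\), all absolute values in \eqref{eq3098}--\eqref{eq3107} can be dropped. So we must check that for a.e. \(x\), every \(\xi\) and every \(i\),
\[
\max_{s\in[m_i^+-1,m_i^+]}\bigl(L(x,s,\xi)+L(x,s,-\xi)\bigr)\leq (1+C)\inf_{s\geq m_i^+}L(x,s,\xi),
\]
together with the symmetric inequality on the negative side.

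For the positive side, fix \(s\in[m_i^+-1,m_i^+]\) and \(s'\geq m_i^+\). Since \(m_i^+-1>t_0\), both \(s\) and \(s'\) lie in \((t_0,+\infty)\) with \(s\leq s'\). Monotonicity therefore gives \(L(x,s,\xi)\leq L(x,s',\xi)\). The symmetry hypothesis gives \(L(x,s,-\xi)\leq C L(x,s,\xi)\leq C L(x,s',\xi)\). Summing, we get \(L(x,s,\xi)+L(x,s,-\xi)\leq(1+C)L(x,s',\xi)\). Taking the maximum over \(s\) and the infimum over \(s'\) gives \eqref{eq3098} with constant \(1+C\).

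The negative side is identical. For \(s\in[-m_i^-,-m_i^-+1]\subset(-\infty,-t_0)\) and \(s'\leq -m_i^-\leq s\), the nonincreasing behaviour on \((-\infty,-t_0)\) gives \(L(x,s,\xi)\leq L(x,s',\xi)\), and the same symmetry argument yields \eqref{eq3107}.

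The only delicate point is the quantifiers. The hypotheses hold "for a.e. \(x\)" and then for every \((t,\xi)\), so a single null set must serve all \(i\), \(s\), \(s'\) and \(\xi\). This is exactly how the monotonicity and symmetry assumptions are phrased, so no Fubini-type argument is needed. We must also ensure the intervals \([m_i^+-\varepsilon_i,m_i^+]\) stay strictly inside \((t_0,+\infty)\); this is why \(m_i^\pm\) is chosen larger than \(t_0+\varepsilon_i\). Once these are settled, Theorem~\ref{coro-other-truncations} gives \(L^\infty(\Omega)\)-regularity at every \(u\in\mathcal A\).
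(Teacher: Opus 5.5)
Your proposal is correct and follows the paper's own proof: you apply Theorem~\ref{coro-other-truncations} with \(\theta^\pm=1\) and \(q=0\), combining monotonicity on the tails with the bound \(L(x,t,-\xi)\leq C\,L(x,t,\xi)\). The differences are cosmetic. You choose \(m_i^\pm=t_0+1+i\) and \(\varepsilon_i=1\) explicitly. You also apply the symmetry bound before monotonicity rather than after, and your constant \(1+C\) controls the sum in \eqref{eq3098}--\eqref{eq3107} directly, whereas the paper bounds the two terms separately by \(\max\{1,C\}\).
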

\begin{proof}
Choose \(m_k\to+\infty\) and \(\varepsilon_k\in(0,1)\) such that
\(
m_k-\varepsilon_k>t_0\).
Let \(C_0:=\max\{1,C\}\).
If \(s\in[m_k-\varepsilon_k,m_k]\), then monotonicity gives
\[
L(x,s,\xi)
\leq L(x,m_k,\xi)
=\inf_{r\geq m_k}L(x,r,\xi),
\]
while
\[
L(x,s,-\xi)
\leq L(x,m_k,-\xi)
\leq C L(x,m_k,\xi).
\]
Hence
\[
\max\left\{
\max_{s\in[m_k-\varepsilon_k,m_k]}L(x,s,\xi),
\max_{s\in[m_k-\varepsilon_k,m_k]}L(x,s,-\xi)
\right\}
\leq
C_0\inf_{r\geq m_k}L(x,r,\xi).
\]
Similarly, 
\[
\max\left\{
\max_{s\in[-m_k,-m_k+\varepsilon_k]}L(x,s,\xi),
\max_{s\in[-m_k,-m_k+\varepsilon_k]}L(x,s,-\xi)
\right\}
\leq
C_0\inf_{r\leq-m_k}L(x,r,\xi).
\]
Therefore the assumptions of
Theorem~\ref{coro-other-truncations} are satisfied with
\[
m_k^+=m_k^-:=m_k, \qquad \theta^+=\theta^-=1, \qquad q=0,
\]
and the conclusion follows.
\end{proof}

If in Corollary~\ref{propositionH1H2}, the function \(g(x,\cdot)\) is even and satisfies \(\inf_{\Omega\times \R^N}|g|=0\) but this infimum is not attained, then neither {\rm (H1)} nor {\rm (H2)} in Corollary~\ref{propositionH1H2} are satisfied. However, even in this situation, it is often possible to conclude, as the following consequence of Theorem~\ref{coro-other-truncations} shows.

\begin{corollary}
\label{cor-dellafrancesca}
We assume that there exists a continuous function \(a:\R\to \R\) and a Carathéodory function \(g:\Omega\times \R^N\to \R \) such that \(L(x,t,\xi)=a(t)g(x,\xi)\) for every \((x,t,\xi)\in \Omega \times \R\times \R^N\). 
Assume that \(g(\cdot,0)\in L^1(\Omega)\) and that there exists \(c>0\) such that
\begin{equation}\label{eq427}
|g(x,-\xi)|\leq c|g(x,\xi)|
\qquad\text{for a.e. }x\in\Omega\text{ and every }\xi\in\mathbb R^N.
\end{equation}
Then \(\mathcal{L}\) is \(L^{\infty}(\Omega)\)-regular at every \(u\in \mathcal{A}\).
\end{corollary}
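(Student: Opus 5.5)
The plan is to deduce the statement from Theorem~\ref{coro-other-truncations}. The tails at \(+\infty\) and at \(-\infty\) are treated independently, and on each side \(\theta^\pm\in\{0,1\}\) is chosen according to the behaviour of \(|a|\) there. Because \(L\) has product structure, \(|L(x,s,\pm\theta\xi)|=|a(s)|\,|g(x,\pm\theta\xi)|\). Hypothesis~\eqref{eq427} then gives
\[
|L(x,s,\theta\xi)|+|L(x,s,-\theta\xi)|\leq (1+c)\,|a(s)|\,|g(x,\theta\xi)| .
\]
So the whole issue is how \(|a|\) behaves at infinity. For the positive tail, set \(\alpha(t):=\inf_{s\geq t}|a(s)|\), which is nondecreasing in \(t\).

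\emph{Case 1: \(\alpha(t_1)>0\) for some \(t_1\).} Take \(\theta^+=1\). For each \(i\) with \(i\geq t_1\), pick \(m_i^+\geq i\) with \(|a(m_i^+)|\leq 2\alpha(i)\); this is possible by the definition of the infimum, since \(\alpha(i)>0\). Monotonicity of \(\alpha\) gives \(\alpha(i)\leq\alpha(m_i^+)\), hence \(|a(m_i^+)|\leq 2\alpha(m_i^+)\). By continuity of \(a\), choose \(\varepsilon_i>0\) with \(\max_{[m_i^+-\varepsilon_i,m_i^+]}|a|\leq 3\alpha(m_i^+)\). Then for a.e. \(x\) and every \(\xi\),
\[
\max_{s\in[m_i^+-\varepsilon_i,m_i^+]}\bigl(|L(x,s,\xi)|+|L(x,s,-\xi)|\bigr)\leq 3(1+c)\,\alpha(m_i^+)\,|g(x,\xi)| = 3(1+c)\inf_{s\geq m_i^+}|L(x,s,\xi)|,
\]
which is \eqref{eq3098} with \(C=3(1+c)\) and any \(q\geq0\).

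\emph{Case 2: \(\alpha\equiv0\), i.e.\ \(\liminf_{t\to+\infty}|a(t)|=0\).} Take \(\theta^+=0\), so only \(\xi=0\) enters the left-hand side. Choose \(m_i^+\geq i\) with \(|a(m_i^+)|<1/2\). By continuity, choose \(\varepsilon_i\) so that \(|a|\leq1\) on \([m_i^+-\varepsilon_i,m_i^+]\). The left-hand side of \eqref{eq3098} is then at most \(2|g(x,0)|\). So \eqref{eq3098} holds with \(q:=2|g(\cdot,0)|\), which lies in \(L^1(\Omega)\) by assumption; the infimum term on the right is simply not used. Geometrically, this is a constant truncation disguised as a degenerate sawtooth.

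The negative tail is handled in exactly the same way with \(\inf_{s\leq -t}|a(s)|\), giving \(\theta^-\) and \(m_i^-\). Since \(\varepsilon_i\) may differ between the two sides, I take the minimum of the two values. Likewise I take \(C:=3(1+c)\) and \(q:=2|g(\cdot,0)|\) globally; this works because both choices dominate what each case requires. Theorem~\ref{coro-other-truncations} then applies.

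The main obstacle is the choice of \(m_i^+\) in Case 1. The bound \(\max_{[m_i^+-\varepsilon_i,m_i^+]}|a|\lesssim\inf_{s\geq m_i^+}|a|\) must hold with a uniform constant, even though \(a\) need not be monotone. This is why \(m_i^+\) is picked where \(|a|\) nearly realizes its infimum over the future tail, and why the monotonicity of \(\alpha\) is used. A secondary point is to check that the dichotomy \(\alpha\equiv0\) versus \(\alpha\not\equiv0\) is exhaustive, which it is because \(\alpha\) is nondecreasing. One should also check measurability of the maxima in \(s\), which follows from continuity of \(L\) in \(s\).
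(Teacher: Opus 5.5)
Your proposal is correct and follows essentially the same route as the paper: the same dichotomy on the behaviour of \(|a|\) at \(\pm\infty\) (your condition \(\alpha(t_1)>0\) for some \(t_1\) is equivalent to the paper's \(\liminf_{t\to+\infty}|a(t)|>0\)), the choice \(\theta^\pm\in\{0,1\}\), the selection of \(m_i^+\geq i\) nearly realizing \(\inf_{t\geq i}|a(t)|\) with continuity of \(a\) giving \(\varepsilon_i\), and \(q\) a multiple of \(|g(\cdot,0)|\), before invoking Theorem~\ref{coro-other-truncations}. Only the constants differ: the paper takes \(C=4(1+c)\) and \(q=4|g(\cdot,0)|\).
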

\begin{proof}
Set
\[
\alpha^+:=\liminf_{t\to+\infty}|a(t)|,
\qquad
\alpha^-:=\liminf_{t\to-\infty}|a(t)|,
\]
and define
\[
\theta^+:=\begin{cases}
1 & \textrm{ if } \alpha_+>0,\\
0 & \textrm{ if } \alpha_+=0,
\end{cases}
\qquad
\theta^-:=\begin{cases}
1 & \textrm{ if } \alpha_->0,\\
0 & \textrm{ if } \alpha_-=0.
\end{cases}
\]
Assume first that \(\alpha_+>0\). 
There exists \((m_{i}^{+})_{i\geq 1}\) such that for every \(i\) sufficiently large \(m_{i}^{+}\geq i\) and 
\[
0<|a(m_{i}^{+})|\leq 
2 \inf_{t\geq i}|a(t)|. 
\]
By continuity of \(a\),  there exists \(\varepsilon_i>0\) such that
\[
\max_{s\in [m_{i}^+-\varepsilon_i, m_{i}^+]}|a(s)| \leq 2 |a(m_{i}^+)|.
\]
Using~\eqref{eq427}, this implies that for a.e. \(x\in \Omega\) and for every \(\xi\in \R^N\), 
\begin{align}
\max_{s\in[m_{i}^+-\varepsilon_i,m_{i}^+]}\bigl(|a(s)g(x,\theta^+ \xi)|+ |a(s)g(x,-\theta^+\xi)|\bigr)
&\leq (1+c)\big(\max_{s\in[m_{i}^+-\varepsilon_i,m_{i}^+]}|a(s)|\bigr)|g(x,\xi)|\\
&\leq
4(1+c)\inf_{t\geq i}|a(t)| |g(x,\xi)|=4(1+c) \inf_{t\geq i}|L(x,t,\xi)|\\
&\leq 4(1+c) \inf_{t\geq m_{i}^+}|L(x,t,\xi)|.
\end{align}

When \(\alpha^+=0\), there exists \((m_{i}^+)_{i\geq 1}\) such that for every \(i\geq 1\), one has \(m_{i}^+\geq i\) and \(|a(m_{i}^+)|\leq 1\). By continuity of \(a\),  there exists \(\varepsilon_i>0\) such that
\[
\max_{s\in [m_{i}^+-\varepsilon_i, m_{i}^+]}|a(s)| \leq  |a(m_{i}^+)|+1\leq 2.
\]
Then, since \(\theta^+=0\) in that case, 
\[
\max_{s\in[m_{i}^+-\varepsilon_i,m_{i}^+]}\bigl(|L(x,s,\theta^+ \xi)|+ |L(x,s,-\theta^+\xi)|\bigr)
= 2\max_{s\in[m_{i}^+-\varepsilon_i,m_{i}^+]}|a(s)||g(x,0)| \leq 4|g(x,0)|. 
\]
In both cases, we thus have
\[
\max_{s\in[m_{i}^+-\varepsilon_i,m_{i}^+]}\bigl(|L(x,s,\theta^+ \xi)|+ |L(x,s,-\theta^+\xi)|\bigr) \leq 4(1+c)\inf_{s\in [m_{i}^+,+\infty)}|L(x,s,\xi)|+4|g(x,0)|.
\]
This proves that \eqref{eq3098} is satisfied, by taking \(C=4(1+c)\)  and \(q(x)=4|g(x,0)|\).

Similarly, by considering the two cases \(\alpha^->0\) or \(\alpha^-=0\), one can construct a sequence \((m_{i}^-)_{\geq 1}\) which tends to \(+\infty\) and such that, up to decreasing \(\varepsilon_i\),
\[
\max_{s\in[-m_{i}^-, -m_{i}^- +\varepsilon_i]}\bigl(|L(x,s,\theta^- \xi)|+ |L(x,s,-\theta^-\xi)|\bigr)\leq C\inf_{s\in (-\infty, -m_{i}^-]}|L(x,s,\xi)|+4|g(x,0)|.
\]
The conclusion then follows from Theorem~\ref{coro-other-truncations}.
\end{proof}

\appendix
\section{Technical lemmata}

\begin{lemma}\label{lm-Sobolev}
Let \(\Omega\) be an open set in \(\R^N\), \(p\in [1,N]\) and let \(\varphi\in W^{1,p}(\R^N)\). Then, for every \(u\in W^{1,p}_\varphi(\Omega)\), one has \(
u\in L^{p^\#}(\Omega)\).
\end{lemma}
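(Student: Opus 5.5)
The statement to prove is Lemma~\ref{lm-Sobolev}: every \(u\in W^{1,p}_\varphi(\Omega)\) lies in \(L^{p^\#}(\Omega)\). I would write \(u=\varphi+v\) with \(v\in W^{1,p}_0(\Omega)\) and treat the two summands separately.

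\emph{The part \(v\).} Let \(\bar v\) be the extension of \(v\) by zero to \(\R^N\). Since \(v\) is a \(W^{1,p}\)-limit of functions in \(\mathcal C^1_c(\Omega)\), we have \(\bar v\in W^{1,p}(\R^N)\) with \(\|\bar v\|_{W^{1,p}(\R^N)}=\|v\|_{W^{1,p}(\Omega)}\).
\begin{itemize}
\item If \(p<N\), the Gagliardo--Nirenberg--Sobolev inequality \(\|w\|_{L^{p^*}(\R^N)}\leq C\|\nabla w\|_{L^p(\R^N)}\) holds for \(w\in \mathcal C^1_c(\R^N)\). It passes to \(W^{1,p}_0(\Omega)\) by density together with Fatou's lemma along an a.e.\ convergent subsequence. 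This gives \(v\in L^{Np/(N-p)}(\Omega)\).
\item If \(p=N\), we only need \(L^N\) integrability, and \(v\in L^N(\Omega)\) holds trivially because \(v\in L^p(\Omega)\) with \(p=N\).
\end{itemize}

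\emph{The part \(\varphi\).} We have \(\varphi\in W^{1,p}(\R^N)\).
\begin{itemize}
\item If \(p<N\), the Sobolev embedding \(W^{1,p}(\R^N)\hookrightarrow L^{p^*}(\R^N)\) gives \(\varphi\in L^{p^*}(\R^N)\), hence \(\varphi\in L^{p^\#}(\Omega)\).
\item If \(p=N\), we have \(\varphi\in L^N(\R^N)\) directly.
\end{itemize}
Summing the two parts gives \(u\in L^{p^\#}(\Omega)\).

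The choice \(p^\#=N\) in the case \(p=N\) is made precisely so that this case is trivial. Note also that boundedness of \(\Omega\) is not needed, since both \(\bar v\) and \(\varphi\) are handled on all of \(\R^N\). The only step requiring care is extending the Sobolev inequality from compactly supported \(\mathcal C^1\) functions to the whole of \(W^{1,p}_0(\Omega)\), including the case \(p=1\). This is handled by the density argument together with Fatou's lemma described above.
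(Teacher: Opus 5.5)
Your proof is correct and follows essentially the paper's route. The paper extends \(u\) to \(\varphi+\bar v\in W^{1,p}(\R^N)\) and applies the Sobolev inequality on \(\R^N\) once, with the case \(p=N\) trivial; you apply the same inequality to the two summands separately, and your density-plus-Fatou step only makes explicit a standard point the paper leaves implicit.
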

\begin{proof}
Since \(u-\varphi\in W^{1,p}_0(\Omega)\), its extension by zero belongs to
\(W^{1,p}(\R^N)\). Since also \(\varphi\in W^{1,p}(\R^N)\), the function
\(u\) admits an extension belonging to \(W^{1,p}(\R^N)\). 
Therefore
\[
u\in L^{p^\#}(\Omega):
\]
if \(p<N\), this follows from the Sobolev inequality in \(\R^N\), whereas
if \(p=N\), it follows directly from \(u\in W^{1,N}(\Omega)\). 
\end{proof}

\begin{lemma}\label{lm-mkpm}
Let \(h:\R\to \R\) be a Borel map and let \(u:\Omega\to \R\) be a measurable map. If \(h\circ u \in L^{1}(\Omega)\), then there exist two sequences \((M_{k}^+)_{k\geq 1}\) and \((M_{k}^-)_{k\geq 1}\) converging to \(+\infty\) and such that
\[
\lim_{k\to +\infty}|[u\geq M_{k}^{+}]| |h(M_{k}^{+})|=0, \quad \lim_{k\to +\infty}|[u\leq -M_{k}^{-}]| |h(M_{k}^{-})|=0.
\]
\end{lemma}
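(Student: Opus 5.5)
We would argue by contradiction, using only the monotonicity of the distribution function of \(u\). We treat the positive tail; the negative one follows by applying the same argument to \(-u\) and \(t\mapsto h(-t)\). (We read the second limit in the statement as involving \(h(-M_k^-)\), which is how the lemma is used in the proofs of Proposition~\ref{cor-lu0} and Theorem~\ref{prop-della-francesca}.) Set
\[
\mu(t):=|[u\geq t]|,
\]
which is nonincreasing and tends to \(0\) as \(t\to+\infty\), since \(\Omega\) has finite measure.

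First, if \(\mu(t_0)=0\) for some \(t_0\), the claim is trivial. Indeed, then \(\mu(t)=0\) for every \(t\geq t_0\), and any sequence \(M_k^+\to+\infty\) with \(M_k^+\geq t_0\) works. So we assume \(\mu(t)>0\) for every \(t\). Suppose the conclusion fails. Then \(\liminf_{t\to+\infty}\mu(t)|h(t)|>0\), so there exist \(c>0\) and \(T\) such that
\[
|h(t)|\geq \frac{c}{\mu(t)} \qquad \text{for every } t\geq T.
\]
If \(u(x)\geq T\), we may apply this at \(t=u(x)\) and get
\[
\int_{[u\geq T]}|h(u)|\,dx\geq c\int_{[u\geq T]}\frac{dx}{\mu(u(x))}.
\]
It therefore suffices to show that the right-hand integral is infinite, which contradicts \(h\circ u\in L^1(\Omega)\).

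The main step is to prove this divergence without assuming anything about atoms in the distribution of \(u\). Since \(\mu\to0\) and \(\mu>0\), we build inductively levels \(T=t_0<t_1<t_2<\cdots\) with
\[
\mu(t_{j+1})\leq \tfrac12\mu(t_j).
\]
On the slab \(S_j:=[t_j\leq u<t_{j+1}]\) we have \(\mu(u(x))\leq\mu(t_j)\) by monotonicity, hence \(1/\mu(u)\geq 1/\mu(t_j)\). Moreover
\[
|S_j|=\mu(t_j)-\mu(t_{j+1})\geq \tfrac12\mu(t_j).
\]
Each slab therefore contributes at least \(1/2\) to the integral. The slabs are disjoint, so the integral is infinite, which gives the contradiction.

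Hence \(\liminf_{t\to+\infty}\mu(t)|h(t)|=0\), and we can choose \(M_k^+\to+\infty\) along which \(\mu(M_k^+)|h(M_k^+)|\to0\). The only delicate point is the choice of halving levels, which avoids any continuity assumption on \(\mu\).
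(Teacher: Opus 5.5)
Your proof is correct, but it takes a different route from the paper's.

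The paper argues directly. For each \(k\) it picks \(M_k^+\geq k\) with \(|h(M_k^+)|\leq\inf_{t\geq k}|h(t)|+1\). On \([u\geq M_k^+]\subset[u\geq k]\) this gives \(|h(M_k^+)|\leq|h(u(x))|+1\). Hence \(|[u\geq M_k^+]|\,|h(M_k^+)|\leq\int_{[u\geq k]}(|h(u)|+1)\,dx\), which tends to \(0\) by dominated convergence; the negative tail is handled symmetrically with \(h(-M_k^-)\).

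You instead argue by contradiction. You reduce the claim to the fact that \(\int_{[u\geq T]}dx/\mu(u(x))=+\infty\) whenever \(\mu>0\), and you prove this with halving level slabs, so atoms of the distribution of \(u\) do no harm. All steps are sound: the trivial case \(\mu(t_0)=0\), the negation as \(\liminf_{t\to+\infty}\mu(t)|h(t)|>0\), the bound \(|S_j|\geq\mu(t_j)/2\), and the measurability of \(1/\mu\circ u\), which holds because \(\mu\) is monotone.

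What the paper's version buys:
\begin{itemize}
\item It is shorter, with no case distinction and no contradiction.
\item It bounds the product explicitly by a tail integral.
\item Its levels \(M_k^\pm\) depend only on \(h\), not on \(u\), so one sequence serves every \(u\) with \(h\circ u\in L^1(\Omega)\). The same almost-minimizer device reappears in Propositions~\ref{lm-Giotto} and~\ref{prop-single-oscillator}.
\end{itemize}

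What your version buys: it is non-constructive and \(u\)-dependent, but it exposes the mechanism. \(|h|\) cannot dominate \(c/\mu\) on the tail, because \(1/\mu\circ u\) is never integrable there; this is a discrete analogue of \(\int_0 ds/s=+\infty\). Also, you only use the lower bound on \(|h|\) at the values \(t=u(x)\). So when \(\mu>0\) everywhere, your argument shows slightly more: the levels can be chosen among the values of \(u\).

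Your reading of the second limit with \(h(-M_k^-)\) is the right one, since that is what the paper's proof establishes. The literal version with \(h(M_k^-)\) can fail: take \(u\leq0\) unbounded below, and \(h=0\) on \((-\infty,0]\) but growing fast enough on \((0,+\infty)\).
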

\begin{proof}
For every \(k\geq 1\), choose \(M_{k}^+\geq k\) and \(M_{k}^-\geq k\) in a such a way that
\[
|h(M_{k}^+)| \leq \inf_{t\geq k}|h(t)|+1, \quad  |h(-M_{k}^-)| \leq \inf_{t\leq -k}|h(t)|+1.
\]
Then,
\[
|[u\geq M_{k}^{+}]| |h(M_{k}^{+})|\leq \int_{[u\geq k]}(|h(u(x))|+1)\,dx.
\]
Since \(h\circ u\in L^{1}(\Omega)\), the dominated convergence theorem implies that the right hand side converges to \(0\). Hence 
\[
\lim_{k\to +\infty}|[u\geq M_{k}^{+}]| |h(M_{k}^{+})|=0.
\]
Similarly,
\[
\lim_{k\to +\infty}|[u\leq -M_{k}^{-}]| |h(-M_{k}^{-})|=0.
\]
The proof  is complete. 
\end{proof}

\section*{Declaration on the use of generative artificial intelligence}
During the preparation of the revised version of this manuscript, the authors used OpenAI's ChatGPT as an editorial and critical-review tool. In particular, it was used to suggest linguistic and stylistic improvements, reorganize parts of the exposition, check the consistency of notation and cross-references, assist with \LaTeX\ formatting, and identify passages in which an argument or a justification required further clarification. Furthermore, it  suggested possible reformulations or extensions of some statements and proofs. It also provided Example~\ref{ex-genuinely-vanishing-oscillation}. Every mathematical argument, statement, correction, and reference included in the manuscript was subsequently examined, verified, and approved by the authors. The authors take full responsibility for the content of the manuscript.
\bibliographystyle{amsplain}
\bibliography{references}
\end{document}